   \def\to{\rightarrow}
 \def\ls{\lesssim} \def\gs{\gtrsim}
\def\la{\langle} \def\ra{\rangle}
\def\R{\mathbb{R}} \def\C{\mathbb{C}} \def\Z{\mathbb{Z}}
\def\N{\mathbb{N}} \def\S{\mathbb{S}}
\newcommand{\rl}{r_0}
\newcommand{\eps}{\epsilon} \DeclareMathOperator{\dist}{d}
\DeclareMathOperator{\med}{med}
\def\beq{\begin{equation}} \def\eeq{\end{equation}}
\def\beq{\begin{equation}} \def\eeq{\end{equation}}
\newtheorem{thm}{Theorem} 
\newtheorem{pro}[thm]{Proposition} \newtheorem{lem}[thm]{Lemma}
\theoremstyle{remark} \newtheorem{rmk}[thm]{Remark}
\theoremstyle{definition} 
\numberwithin{equation}{section} \numberwithin{thm}{section}
\begin{document}
\title[GWP and scattering for the Dirac-Klein-Gordon system]{Global
  well-posedness and scattering for the massive Dirac-Klein-Gordon system in two dimensions}

\author[I.~Bejenaru]{Ioan Bejenaru} \address[I.~Bejenaru]{Department
  of Mathematics, University of California, San Diego, La Jolla, CA
  92093-0112 USA} \email{ibejenaru@math.ucsd.edu}

\author[V.~Borges]{Vitor Borges} \address[V.~Borges]{Department
  of Mathematics, University of California, San Diego, La Jolla, CA
  92093-0112 USA} \email{vborges@ucsd.edu}

\begin{abstract}
 We prove global well-posedness and scattering for the massive
  Dirac-Klein-Gordon system with small and low regularity initial data in dimension two. To achieve this, we impose a non-resonance condition on the masses.

\end{abstract}
\subjclass[2010]{Primary: 35Q40; Secondary: 35Q41, 35L70}

\maketitle

\section{Introduction}\label{sect:intro}

The Dirac-Klein-Gordon (DKG) system is a basic model of proton-proton
interactions (one proton is scattered in a meson field produced by a
second proton) or neutron-neutron interaction, see Bjorken and Drell
\cite{BjDr}.  In physics these are known as the strong interactions
which are responsible for the forces which bind nuclei.
  
The mathematical formulation of the DKG system in two dimensions is as follows:
  \begin{equation} \label{DKG} \left\{
    \begin{aligned} 
      & (-i \gamma^\mu \partial_\mu + M ) \psi= \phi \psi   \\
      & (\Box + m^2) \phi = \psi^\dag \gamma^0 \psi.
    \end{aligned}
  \right.
\end{equation}
Here, $\Box$ denotes the d'Alembertian $\Box=\partial_t^2-\Delta_x$,
$\psi: \R^{1+2} \rightarrow \C^2$ is the spinor field (column vector),
and $\phi: \R^{1+2}\rightarrow \R$ is a scalar field. For
$\mu=0,1,2$, $\gamma^\mu$ are the $2 \times 2$ Dirac matrices
\[
\gamma^0= \left( \begin{array}{cc} 1 & 0 \\ 0 & -1 \end{array}
\right) , \qquad \gamma^j=\gamma^0 \sigma^j,
\]
where $\sigma^j, j=1,2,3$, are the Pauli matrices given by
\[
\sigma^1= \left( \begin{array}{cc} 0 & 1 \\ 1 & 0 \end{array} \right)
, \qquad \sigma^2=\left( \begin{array}{cc} 0 & -i \\ i & 0 \end{array}
\right) , \qquad \sigma^3=\left( \begin{array}{cc} 1 & 0 \\ 0 &
    -1 \end{array} \right);
\]
we included the full set of Pauli matrices, although $\sigma^3$ is not used in the equation.
$\psi^\dag$ denotes the conjugate transpose of $\psi$, i.e.\
$\psi^\dag=\overline{\psi}^t$. The matrices $\gamma^\mu$ satisfy the
following properties
\[
\gamma^\alpha \gamma^\beta + \gamma^\beta \gamma^\alpha = 2 g^{\alpha
  \beta} I_2, \qquad g^{\alpha \beta}= \mathrm{diag}(1,-1,-1).
\]
We study the Cauchy problem with initial condition
\begin{equation}\label{eq:i-cond}(\psi,\phi,\partial_t
  \phi)|_{t=0}=(\psi_0, \phi_0, \phi_1).
\end{equation}
Based on physical considerations found in Bjorken and Drell \cite[Chapter 10.2]{BjDr}, the first author and Herr explained in \cite{BH-DKG} that it is reasonable to assume that in \eqref{DKG} the following holds true
\[
2M > m > 0.
\] 
In \cite{BH-DKG} it was highlighted that this translates into a non-resonance condition for this system; more details are provided in Section \ref{subsect:mod}.

We now turn our attention to the mathematical aspects of \eqref{DKG}. The fundamental question is that of global regularity of solutions. Most of the methods were developed originally for the three dimensional version of \eqref{DKG}, see for instance \cite{ChGl, Kl, Ba, BeBe, Bo, DaFoSe, Wa, BH-DKG}. In most of these results the masses $M,m$ are arbitrary; only in \cite{Wa, BH-DKG} does the non-resonant condition $2M>m>0$ appear. 

We now explain the current state of the DKG system in two dimensions. For this
we consider the Cauchy problem for \eqref{DKG} with initial data
\begin{equation} \label{rs}
\psi_0 \in H^s(\R^2) ,\quad (\phi_0,\phi_1) \in H^{r}(\R^2) \times
H^{r-1}(\R^2).
\end{equation}
In \cite{Bo2D} Bournaveas  established local well-posedness for $s > \frac14, r=s+\frac12$. This was improved by D'Ancona, Foschi and Selberg \cite{DaFoSe-2D} to the range $s > -\frac15$ and $\max(\frac14-\frac{s}2, \frac14 + \frac{s}2, s) < r < \min(\frac34+2s, \frac34 + \frac{3s}2, 1+s)$ by taking advantage of the null structure which they found in earlier work on the three dimensional problem, see \cite{DaFoSe}. There is also the result of Selberg and Tesfahun in \cite{SeTe} which establishes unconditional uniqueness for  $s \geq 0, r=s+\frac12$. 

In \cite{GrPe}, Gr\"unrock and Pecher establish the first global well-posedness for \eqref{DKG} in two dimension. Their approach relies on combining a refined version of the local in time theory with the conservation law $\|\psi(t)\|_{L^2}=constant$ to obtain a global in time theory; this draws ideas from previous work of Colliander, Holmer and Tzirakis on the Zakharov system in \cite{CoHoNi} and a follow-up work of the first author, Herr, Holmer and Tataru in \cite{BHHT}. The precise results states that \eqref{DKG} with data $\psi(0) \in L^2$ and $(\phi(0), \partial_t \phi(0)) \in H^\frac12 \times H^{-\frac12}$ is globally well-posed; moreover persistence of higher regularity is established. It is important to highlight that this result has no restriction on the size of the initial data. One missing piece of information in the theory developed in \cite{GrPe} is the scattering; this is a normal feature when a global theory relies on the equation being energy subcritical. 

In the context of initial data with high regularity and spatial localization (or concentration),
linear scattering for \eqref{DKG} has been established in \cite{DoLiMaYu, Zh, DoWy}. 

In this paper we seek to develop a global theory for \eqref{DKG} for initial data with limited regularity and no spatial decay, and without relying on energy conservation. We rely on a global in time iteration scheme in carefully designed resolution spaces, and, as a byproduct, we do obtain linear scattering.  Our main result is the following

\begin{thm}\label{thm:main}
  Assume that $\epsilon > 0$ and $2M > m >0$. Then the Cauchy problem
  \eqref{DKG}-\eqref{eq:i-cond} is globally well-posed for small
  initial data
  \[\psi_0 \in H^{\frac12+\epsilon}(\R^2;\C^4),\; (\phi_0,\phi_1) \in
  H^{1+\epsilon}(\R^2;\R) \times H^{\epsilon}(\R^2;\R)\] and
  these solutions scatter to free solutions for $t\to \pm \infty$.
\end{thm}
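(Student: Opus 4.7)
My plan is to set up a global Picard iteration for the DKG system in carefully chosen resolution spaces and to extract scattering directly from their functional structure. The first step is to decompose $\psi$ using the spectral projectors $\Pi_\pm$ of the Dirac Hamiltonian, writing $\psi = \psi_+ + \psi_-$ so that each $\psi_\pm$ satisfies a half-Klein-Gordon equation of the form
\[
(-i\partial_t \pm \sqrt{-\Delta+M^2})\,\psi_\pm = \Pi_\pm(\gamma^0\phi\psi),
\]
and to split $\phi$ analogously into half-wave pieces $\phi_\pm$ of mass $m$. After these reductions, the two nonlinearities become sums over sign choices $(\pm_1,\pm_2,\pm_3)$ of bilinear expressions $\Pi_{\pm_1}(\phi_{\pm_2}\Pi_{\pm_3}\psi)$ and $(\Pi_{\pm_1}\psi)^\dag\gamma^0\Pi_{\pm_2}\psi$, whose symbols carry the Dirac null structure exploited in \cite{DaFoSe, DaFoSe-2D}.

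I would then choose resolution spaces $F^s_\pm$ for the spinor components and $F^r_\pm$ for the scalar, each adapted to its own characteristic surface $\tau = \mp\langle\xi\rangle_{M,m}$. To obtain uniform-in-time bounds together with scattering, the natural framework is that of $U^2/V^2$ atomic spaces in the style of Koch-Tataru and Hadac-Herr-Koch, further refined with angular and modulation localizations so that the critical bilinear estimates close (refinements of this type were developed in \cite{BH-DKG} for the 3D problem and in \cite{BHHT} for Zakharov). The nonlinearities are measured in predual $N$-type spaces of the form $DU^2$; boundedness in the $F$-spaces then yields scattering as a free byproduct, since $V^2$ functions have limits at $t=\pm\infty$ modulo the free flow.

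The heart of the argument is the set of trilinear estimates at $(s,r)=(\tfrac12+\eps,\,1+\eps)$, schematically
\[
\|\Pi_{\pm_1}(\phi_{\pm_2}\psi)\|_{N^s_{\pm_1}} \ls \|\phi_{\pm_2}\|_{F^r_{\pm_2}}\|\psi\|_{F^s},\qquad \|\psi^\dag\gamma^0\psi\|_{N^{r-1}_{\pm}} \ls \|\psi\|_{F^s}^2.
\]
These are proven by Littlewood-Paley and angular decompositions, separated into high$\times$high$\to$low, high$\times$low$\to$high, and low$\times$low interactions. In each case, the projector compositions $\Pi_{\pm_1}\Pi_{\pm_3}$ and $\Pi_{\pm_1}\gamma^0\Pi_{\pm_2}$ contribute a vanishing-angle null form that, via the standard algebraic identity linking angle to modulation, is converted either into angular decay or into a large modulation that can be absorbed by the $V^2$ structure. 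Strichartz and bilinear Strichartz estimates for the 2D Klein-Gordon equation then close each frequency configuration, with the $\eps$-loss in $s$ and $r$ absorbing the endpoint deficiencies of two-dimensional dispersion.

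The principal obstacle is the high$\times$high$\to$low interaction in the wave-equation bilinear estimate, i.e.\ when both spinor factors sit at frequency $N$ and the output wave lives at frequency $K\ll N$. The spinor temporal frequencies are $\mp\langle\xi_j\rangle_M$, and in the same-sign subcase the product has temporal frequency at least $2M$ in absolute value, yet must produce a scalar field living on the mass shell $\tau^2-|\xi|^2=m^2$ with $|\tau|\sim m$ at small $|\xi|$. This is exactly where the non-resonance hypothesis $2M>m$ intervenes: it yields a uniform gap $\gs 4M^2-m^2>0$ between these two surfaces at low output spatial frequency, producing a lower bound on the output modulation that can be traded for the derivatives needed to reach $H^{r-1}$. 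The opposite-sign subcase is handled by the Dirac null structure, whose symbol vanishes at the worst resonant configurations. Once these bilinear estimates are in place, a contraction mapping argument in a small ball of $F^s\times F^r$ delivers global well-posedness, persistence of regularity, and, via $V^2$-convergence, scattering to free Dirac and Klein-Gordon solutions as $t\to\pm\infty$.
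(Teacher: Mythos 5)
Your overall architecture (splitting $\psi=\psi_++\psi_-$ with $\Pi_\pm^M$, half-wave reduction of $\phi$, frequency/angular/modulation decompositions, null structure plus the non-resonance condition $2M>m$, $U^2/V^2$-based spaces, contraction mapping, scattering from $V^2$ limits) coincides with the starting point of the paper. However, there is a genuine gap at the heart of the argument: the scheme you describe — Strichartz norms plus $V^2$/$X^{s,\frac12}$-type high-modulation control, refined by caps, with the null form and the non-resonance lower bound on the resonance function — is exactly the three-dimensional scheme of \cite{BH-DKG} transplanted to $d=2$, and in two dimensions it does \emph{not} close. The obstruction is a derivative loss in high frequency in the trilinear estimates: in the configuration where, say, $\phi$ and one spinor sit at high frequency $2^{k}\sim 2^{k_2}$ and the other spinor at low frequency $2^{k_1}\ll 2^k$, placing the high-modulation piece of the low-frequency factor in an $L^2_{t,x}$-based modulation norm and the other two factors in (cap-localized) $2D$ Strichartz norms produces an unabsorbable factor of the form $2^{(k_2-k_1)/2}$ in one of the two dual arrangements. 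Crucially, this loss cannot be removed by taking $\eps$ larger (more regularity multiplies both sides of the estimate by the same high-frequency weight), so your remark that the "$\eps$-loss in $s$ and $r$ absorbs the endpoint deficiencies of two-dimensional dispersion" does not address it; nor do bilinear Strichartz refinements within the $U^2/V^2$ framework as you have set it up.

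What is missing is a mechanism that trades the derivative loss against additional \emph{time} integrability at high modulation. The paper introduces for this purpose the spaces $X^{\pm,\frac12,1}_{k,\frac43,4}$ (high-modulation pieces measured in $L^{4/3}_tL^4_x$ with the cap scale tied to frequency and modulation via $2l=k-j$), incorporated into the norm \eqref{Zdef}: the low-frequency factor is then estimated in $L^{4/3}_tL^4_x$ while the two high-frequency factors are put in $L^{8}_tL^{8/3}_x$ Strichartz norms, which rebalances the exponents and turns the loss into a factor $2^{(\rl-\frac12-r)k_1}$, summable for $r>\frac12$ with $\rl=1$. Making this rigorous requires nontrivial extra input that your proposal has no counterpart for: kernel bounds for $P_\kappa Q_j^\pm P_k$ and for $\frac{P_\kappa Q_j^\pm P_k}{-i\partial_t\pm\la D\ra}$ in $L^1$ (Lemma \ref{kerb}), since without Plancherel one cannot manipulate modulation localizations in non-$L^2$ spaces; a second, dual linear estimate propagating the new norm (part ii of Lemma \ref{lem:lin}); and a second family of trilinear estimates (Proposition \ref{PTR2}) in addition to the $S_k^\pm$-type ones (Proposition \ref{PTR}). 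An acceptable alternative would be a normal form transformation exploiting $2M>m$ to raise the nonlinearity to cubic order, which the paper mentions but deliberately avoids; your proposal contains neither device, so the key trilinear estimates, as schematically stated, would fail at $(s,r)=(\frac12+\eps,1+\eps)$. A minor additional point: your identification of which sign configuration produces the $\gs 2M$ temporal frequency (handled by non-resonance) and which requires the null structure is reversed relative to Lemma \ref{lem:res} (the opposite-sign case $s_1=+,s_2=-$ gives the large modulation; the equal-sign case is the one rescued by the angular gain).
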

We refer to Subsection \ref{subsect:proof} for more details.
Our result requires more regularity than the ones in \cite{Bo2D, DaFoSe-2D, GrPe}; on the other hand, it provides a global iteration scheme and linear scattering, while the iteration schemes in these papers work only locally in time. Compared with more recent work in \cite{DoLiMaYu, Zh, DoWy} where linear scattering is established, our result above assumes no spatial localization of the data and significantly less regularity.

We note that the scale invariant data space for \eqref{DKG} corresponds to $s=-\frac12,r=0$. We have no reason to believe that the regularity threshold in Theorem \ref{thm:main} is optimal, and we recall that local theories are available below this threshold; see \eqref{rs} and the following paragraphs. We will investigate the possibility of lowering the regularity threshold in Theorem \ref{thm:main}.

As suggested by the scaling of the massless equation, we work with $s=r+\frac12$ in \eqref{rs}. It is likely that our analysis allows for more general relations between $r$ and $s$, but we do not pursue this here.

We highlight that in the results in \cite{Bo2D, DaFoSe-2D, GrPe} the masses
$M,m$ are arbitrary. In the context of a local-in-time result, the terms $M \psi$, $m^2 \phi$ can be treated as perturbations, thus allowing an analysis of \eqref{DKG}
as a system of wave equations. However, this cannot be the case for a
global in time theory that includes scattering.

Our assumption $2M > m >0$ implies that the system
\eqref{DKG} has no resonances. It was known from prior works on
Klein-Gordon type systems with multiple speeds that, under certain
conditions between the masses, resonant interactions do not occur and
the well-posedness theory improves. We refer the reader to the works
of Delort and Fang \cite{DeFa}, Schottdorf \cite{S12} and Germain
\cite{Ge} and to the references therein. 

Note that unlike many of the previous works that dealt with power-type nonlinearities for the Klein-Gordon equation, the
DKG system contains derivatives. This is not apparent
from our formulation of \eqref{DKG}; however, if one wants to write
\eqref{DKG} as a system of Klein-Gordon equations, one should apply
$(-i \gamma^\mu \partial_\mu - M )$ to the first equation and then it
is obvious that the right-hand side contains derivatives.

We now explain the main ideas in this paper. The starting point is to implement a similar strategy to the one developed by the first author and Herr in \cite{BH-DKG}
for the three dimensional problem. This involves sorting out the interplay between the null structure and the lack of resonances, and building the resolution space based on localized Strichartz estimates and $X^{s,\frac12,\infty}$ spaces; practically this means adapting the setup from \cite{BH-DKG} to two dimensions. An iteration scheme based only on this type of information fails due to a loss of derivative (in high frequency) in the nonlinear estimates; this cannot be fixed by assuming more regularity on the initial data and hence for the solution. Our solution is to involve a more general version of the $X^{s,\frac12,\infty}$ type spaces; the generality comes from replacing the $L^2_{t,x}$ base space in the definition of $X^{s,\frac12,\infty}$ by $L^p_t L^q_x$, see section \ref{subsec:ressp} for more details. These structures are particularly useful when $p< 2$ since they give access to more time integrability (which we use for low frequencies), and, in nonlinear interactions, one can use less time integrability on other factors (which we do for high frequencies). Since the standard Strichartz estimates for Klein-Gordon requires more regularity as one increases the time integrability, this mechanism allows us to fix the loss of derivative in high frequency mention earlier. 

A natural question to ask is what is the mechanism that allows us to introduce these structures. The short answer is that this mechanism is already present in the simpler, more convenient setup of the $X^{s,\frac12,\infty}$ space (or variations of). The $X^{s,\frac12,\infty}$ space provides information that is not available at the linear level; the basic idea is that outputs at higher modulation are created via nonlinear interactions. The output of these nonlinear interactions can be (potentially) measured in $L^2_{t,x}$ spaces, but also in many other spaces that appear from considering products of, say, linear waves that are estimated in Strichartz norms. 
One of the main reasons one chooses to take advantage of this feature using  $X^{s,\frac12,\infty}$ (or $X^{s,\frac12,r}, r \in [1,\infty]$) is that it is $L^2_{t,x}$ based and one can rely on Plancherel identity to deal with it at the technical level.
Involving any other $L^p_t L^q_x$-based structure (other than $L^2_t L^2_x$) requires additional work precisely due to the lack of the Plancherel identity. Our paper is not the first time that non-$L^2_{t,x}$ based structures are being used, see for instance \cite{BH-CD2, BH-CD3} (even here one uses $L^p_t L^2_x$ based structures which still carry the benefit of having a partial $L^2$ based structure). What sets our approach apart is the central role they play in closing the iteration scheme.

An alternative, yet natural, way to take advantage of the lack of resonance would be a normal form approach that increases the order of nonlinearity (from quadratic to cubic in our case); see also the Acknowledgments at the end of this section. The original work of Shatah in \cite{Sh} on the Klein-Gordon equation with quadratic nonlinearities introduced the normal form transformation to bypass problems similar to the ones mentioned above; note that our system \eqref{DKG} is in fact reducible to a system of Klein-Gordon type equations with quadratic nonlinearities in dimension two, just like in \cite{Sh}. The approach introduced in \cite{Sh} has been used in many other works, see for instance \cite{Ge, OzTsTs, GuNa} to name just a few instances that are closer to our context. We believe that our approach in this paper provides an alternative to the normal form transformation, by taking advantage of a wider class of estimates that are available in higher modulation which essentially translates into increasing the order of the nonlinearity when needed, just like the normal form method does. We also think that this is one of the strengths of this work, which may be of independent interest and applicable to other problems.

We conclude this section with an overview of the paper. In Section \ref{sect:red}
we introduce some of the basic notation and rewrite the original system \eqref{DKG} in 
the equivalent form \eqref{DKGf} which has two advantages: it is first-order in time and it
unveils the null structure. The gains from the null structure are quantified in Subsection \ref{subsect:mod}
in a manner that fits our analysis. In Section \ref{sect:fs-linear} we introduce the resolution space in which we iterate our system and establish the basic linear estimates. This is where we formalize the new structures generalizing the $X^{s,\frac12,\infty}$ space as described earlier. 
Just as in \cite{BH-DKG} we employ the $U^p,V^p$ spaces in order to establish some of the basic linear estimates. In Section \ref{sect:nl} we prove some key trilinear estimates which are then used to prove our main result in Theorem \ref{thm:main}.

\bf Acknowledgments: \rm The first author is grateful to Sebastian Herr and Achenef Tesfahun for many useful conversation on this problem. In particular the first author, Herr and Tesfahun explored the possibility of employing the normal form method for this problem. 

The first author was supported by the NSF grant DMS-1900603.

\section{Notation and Reductions}\label{sect:red}

The content of this section is nearly identical to that in the corresponding section in \cite{BH-DKG}. 

\subsection{Notation}\label{subsect:not}
We define $A\ls B$, if there is a harmless constant $c>0$ such that $A\leq c B$, and $A\gs B$ iff $B\ls A$. Further, we define $A\approx B$ iff both $A\ls B$ and $B\ls A$. Also, we define $A\ll B$ if the constant $c$ can be chosen such that $c<2^{-10}$. Also, $A\gg B$ iff $B\ll A$.

Similarly, we define $A\preceq B$ iff $2^A\ls 2^B$, $A\succeq B$ iff $2^A\gs 2^B$, $A \sim B$ iff $2^A\approx 2^B$, $A\prec B$ iff $2^A\ll 2^B$, $A\succ B$ iff $2^A\gg 2^B$.

Let $\rho^0\in C^\infty_c(-2,2)$ be a fixed smooth, even, cutoff
satisfying $\rho^0(s)=1$ for $|s|\leq 1$ and $0\leq \rho\leq 1$. For
$k \in \Z$ we define $\rho_k:\R \ \mbox{or} \ \R^2 \to \R$,
$\rho_k(y):=\rho^0(2^{-k}|y|)-\rho^0(2^{-k+1}|y|)$; it will be clear from the context if the argument of $\rho_k$ is in $\R$ or $\R^2$. Let $\tilde{\rho}_k=\rho_{k-1}+\rho_k+\rho_{k+1}$. For $k \geq 1$, let $P_k$ be the
Fourier multiplication operators with respect to $\rho_k$, and
$P_0=I-\sum_{k \geq 1}P_k$.  For $j \in \Z$ we define
\[
\mathcal{F}[Q^{\pm,m}_{j}f](\tau,\xi)=\rho_j(\tau\pm \la\xi
\ra_m)\mathcal{F}f(\tau,\xi).
\]
Similarly, we define $\tilde{P}_k$ and $\tilde{Q}^{\pm,m}_{j}$. Next we define $P_{\leq k}=\sum_{0\leq k'\leq k }P_{k'}$, $P_{\prec
  k}=\sum_{0\leq k'\prec k }P_{k'}$, $P_{> k}=I-P_{\leq k}$,
$P_{\succeq k}=I-P_{\prec k}$; we define $Q^{\pm,m}_{\leq j}=\sum_{ j'\leq j } Q^{\pm,m}_{j'}$, and similarly $Q^{\pm,m}_{\prec j}$, $Q^{\pm,m}_{\succeq j}$, and $Q^{\pm,m}_{j \in J}$ for an interval $J$.
In the obvious way we also define the analogous operators based on $\tilde{P}_k$ and $\tilde{Q}^{\pm,m}_{j}$.

In the case $m=1$ we suppress the superscripts, e.g.\
$Q^{\pm,1}_{j}=Q^{\pm}_{j}$.

Further, for $l \in \N$ let $\mathcal{K}_{l}$ denote a set of
spherical caps of radius $2^{-l}$ which is a covering of $\S^1$ with
finite overlap. For a cap $\kappa \in \mathcal{K}_{l}$ we denote its
center in $\S^1$ by $\omega(\kappa)$. Let $\Gamma_\kappa$ be the cone
generated by $\kappa \in \mathcal{K}_{l}$ and $(\eta_\kappa)_{\kappa
  \in \mathcal{K}_l}$ be a smooth partition of unity subordinate to
$(\Gamma_\kappa)_{\mathcal{K}_{l}}$. Let $P_{\kappa}$ denote the
Fourier-muliplication operator with symbol $\eta_\kappa$, such that
$I=\sum_{\kappa \in \mathcal{K}_l}P_{\kappa}$. Further, let
$\tilde{P}_{\kappa}$ with doubled support such that
$P_{\kappa}=\tilde{P}_{\kappa}P_{\kappa}=P_{\kappa}\tilde{P}_{\kappa}$.
For notational convenience, we also define $\mathcal{K}_{0}=\{\S^1\}$
and $P_\kappa=I$ if $\kappa \in \mathcal{K}_{0}$.

\subsection{Setup of the system and null
  structure}\label{subsect:setup-null}

As written in \eqref{DKG} the DKG system has a
linear part whose coefficients are matrices; however it is technically
easier to work with scalar equations. Such a reduction was developed by D'Ancona, Foschi and Selberg in  \cite[Section 2 and 3]{DaFoSe} in 
three dimensions and in \cite{DaFoSe-2D} in two dimensions. This setup is also effective in identifying a null-structure in the nonlinearity. 

In  \cite{BH-DKG} the first author and Herr adapted (in three dimensions) the framework from \cite[Section 2 and 3]{DaFoSe} to include the mass terms in the linear part, which is very useful if one seeks to develop a global theory. Below we use this setup and note that the adjustments to our two dimensional setup are very minor and mainly at the notation level. 

For $j=1,2$ the matrices
$
\alpha^j:=\gamma^0\gamma^j, \; \beta:=\gamma^0
$
have the properties
\[
\alpha^j\beta+\beta\alpha^j=0, \;\alpha^j \alpha^k+\alpha^k\alpha^j=2\delta^{jk}I_2,
\]
see \cite[p.~878]{DaFoSe} for more details. We introduce the Fourier multipliers $\Pi_{\pm}^M(D)$ with symbol
\[
\Pi^M_\pm(\xi)=\frac12 [I \pm \frac{1}{\la \xi \ra_M} ( \xi \cdot
\alpha + M \beta)].
\]
In the case $M=1$ we suppress the superscript, i.e.\
$\Pi_{\pm}(D)=\Pi_{\pm}^1(D)$.

We then define $\psi_\pm=\Pi_\pm^M(D) \psi$ and split $\psi=\psi_+ +
\psi_-$. Also, define $\la D \ra_M=\sqrt{M^2-\Delta}$, with the usual convention that $\la D \ra = \la D \ra_1$. 
By applying the operators $\Pi_\pm^M(D)$ to the system
\eqref{DKG} we obtain the following system of equations:
\begin{equation} \label{DKGnew} \left\{
    \begin{aligned} 
      & (-i\partial_t + \langle D \rangle_M) \psi_+ = \Pi^M_+(D) (\phi \beta \psi) \\
      & (-i\partial_t - \langle D \rangle_M) \psi_- = \Pi^M_-(D) ( \phi \beta \psi)  \\
      & (\Box + m^2) \phi = \la \psi, \beta \psi \ra.
    \end{aligned}
  \right.
\end{equation}
In order to have a fully first order system, we define
$\phi_\pm=\phi\pm i \la D \ra_m^{-1} \partial_t \phi$ thus
\[
(-i \partial_t + \la D \ra_m) \phi_+ = \la D \ra_m^{-1} \la \psi,
\beta \psi \ra.
\]
Note that $\phi=\Re \phi_\pm$ and $\phi_-=\overline{\phi_+}$ since
$\phi$ is real-valued.  The system which we will study is
\begin{equation} \label{DKGf} \left\{
    \begin{aligned} 
      & (-i\partial_t + \langle D \rangle_M) \psi_+ = \Pi^M_+(D) (\Re \phi_+ \beta \psi) \\
      & (-i\partial_t - \langle D \rangle_M) \psi_- = \Pi^M_-(D) ( \Re \phi_+ \beta \psi)  \\
      & (-i \partial_t +\la D \ra_m) \phi_+ = \la D \ra_m^{-1} \la
      \psi, \beta \psi \ra.
    \end{aligned}
  \right.
\end{equation}
We aim to provide a global theory for this system for initial data
$(\psi_{\pm,0}, \phi_{+,0}) \in H^{\frac12+\epsilon} \times H^{1 +
  \epsilon}$.  It is an easy exercise that this translates back into a
global theory for the original system with $(\psi_0,\phi_0,\phi_1) \in
H^{\frac12+\epsilon} \times H^{1 + \epsilon} \times H^{
  \epsilon}$.

There is a null structure in the system \eqref{DKGf}, which we
describe next.  This is again inspired by the work in \cite{DaFoSe}
and was adapted to the current setup in \cite{BH-CD3, BH-CD2, BH-DKG}. For more details,
we refer to the reader to \cite{DaFoSe,BH-CD3}. We decompose $\la \psi, \beta \psi \ra$ as
\[
\begin{split}
  \la \psi, \beta \psi\ra & = \la \Pi^M_+(D) \psi_+, \beta \Pi^M_+(D) \psi_+ \ra + \la \Pi^M_-(D) \psi_-, \beta \Pi^M_-(D)) \psi_- \ra \\
  &\quad + \la \Pi^M_+(D) \psi_+, \beta \Pi^M_-(D) \psi_- \ra+ \la
  \Pi^M_-(D) \psi_-, \beta \Pi^M_+(D) \psi_+ \ra.
\end{split}
\]
We have
\begin{equation}\label{eq:com}
  \Pi^M_\pm(D)\beta=\beta\Pi^M_\mp(D)\pm M \la D\ra^{-1}_M \beta. 
\end{equation}

The following Lemma, which corresponds to \cite[Lemma 3.1]{BH-CD3},  \cite[Lemma 3.1]{BH-CD2} and
\cite[Lemma 2]{DaFoSe}, analyzes the symbols of the bilinear operators
above.

\begin{lem}\label{lem:PiPi} For fixed $M \geq 0$, the following holds true:
  \begin{equation} \label{PiPi}
    \begin{split}
      \Pi_\pm^M(\xi) \Pi_\mp^M(\eta) & = \mathcal{O}(\angle (\xi,\eta)) + \mathcal{O}(\la \xi \ra^{-1} + \la \eta \ra^{-1}) \\
      \Pi_\pm^M(\xi) \Pi_\pm^M(\eta) & = \mathcal{O}(\angle
      (-\xi,\eta)) + \mathcal{O}(\la \xi \ra^{-1} + \la \eta \ra^{-1}).
    \end{split}
  \end{equation}
\end{lem}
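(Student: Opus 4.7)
The plan is to reduce both identities in \eqref{PiPi} to a single estimate on the difference $A(\xi) - A(\eta)$, where I introduce the matrix $A(\xi) := \langle\xi\rangle_M^{-1}(\xi\cdot\alpha + M\beta)$ so that $\Pi_\pm^M(\xi) = \tfrac12(I \pm A(\xi))$. The key structural input is the identity $A(\xi)^2 = I$, which follows from $(\xi\cdot\alpha + M\beta)^2 = (|\xi|^2 + M^2)I$ using the anticommutation relations listed in Subsection \ref{subsect:setup-null}. This identity makes $\Pi_\pm^M(\xi)$ uniformly bounded and, more importantly, yields the diagonal cancellation $\Pi_\pm^M(\xi)\Pi_\mp^M(\xi) = \tfrac14(I - A(\xi)^2) = 0$, which is the algebraic counterpart of the fact that the first bound in \eqref{PiPi} should vanish when $\xi = \eta$.

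For the opposite-sign identity, I will exploit this diagonal cancellation by writing
$$\Pi_\pm^M(\xi)\Pi_\mp^M(\eta) = \Pi_\pm^M(\xi)\bigl(\Pi_\mp^M(\eta) - \Pi_\mp^M(\xi)\bigr) = \mp\tfrac12 \Pi_\pm^M(\xi)\bigl(A(\eta) - A(\xi)\bigr),$$
and since $\|\Pi_\pm^M(\xi)\|_{\mathrm{op}} \leq 1$, it suffices to bound $A(\xi) - A(\eta)$. Here I will decompose $A(\xi) = c(\xi)\hat\xi\cdot\alpha + s(\xi)\beta$ where $c(\xi) = |\xi|/\langle\xi\rangle_M$, $s(\xi) = M/\langle\xi\rangle_M$ and $\hat\xi = \xi/|\xi|$, giving
$$A(\xi) - A(\eta) = c(\xi)(\hat\xi - \hat\eta)\cdot\alpha + (c(\xi) - c(\eta))\hat\eta\cdot\alpha + (s(\xi) - s(\eta))\beta.$$
The first term is $O(\angle(\xi,\eta))$ from $|\hat\xi - \hat\eta| \lesssim \angle(\xi,\eta)$; the last two are $O(\langle\xi\rangle^{-1} + \langle\eta\rangle^{-1})$ using $1 - c(\xi) = M^2/[\langle\xi\rangle_M(\langle\xi\rangle_M+|\xi|)] \lesssim \langle\xi\rangle^{-1}$ and $s(\xi) \lesssim \langle\xi\rangle^{-1}$ (with constants depending on the fixed mass $M$).

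For the same-sign identity, I plan to reduce to the opposite-sign case by noting that $A(\eta) + A(-\eta) = 2M\beta/\langle\eta\rangle_M$, which gives
$$\Pi_\pm^M(\eta) - \Pi_\mp^M(-\eta) = \pm\tfrac12\bigl(A(\eta) + A(-\eta)\bigr) = O(\langle\eta\rangle^{-1}).$$
Hence $\Pi_\pm^M(\xi)\Pi_\pm^M(\eta) = \Pi_\pm^M(\xi)\Pi_\mp^M(-\eta) + O(\langle\eta\rangle^{-1})$, and applying the first identity with $\eta$ replaced by $-\eta$ (together with $\angle(\xi,-\eta) = \angle(-\xi,\eta)$) produces the stated bound.

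The only delicate point, and the main bookkeeping step, is cleanly separating the angular contribution $\angle(\xi,\eta)$ from the mass corrections $\langle\xi\rangle^{-1} + \langle\eta\rangle^{-1}$ in the expansion of $A(\xi) - A(\eta)$; everything else is purely algebraic once the ``eigenprojection'' identity $A(\xi)^2 = I$ and the near-antipodal relation $A(-\eta) = -A(\eta) + O(\langle\eta\rangle^{-1})$ are in hand.
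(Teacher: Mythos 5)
Your argument is correct: the identity $A(\xi)^2=I$ gives the exact cancellation $\Pi_\pm^M(\xi)\Pi_\mp^M(\xi)=0$, the decomposition of $A(\xi)-A(\eta)$ cleanly isolates the angular term from the $\mathcal{O}(\la\xi\ra^{-1}+\la\eta\ra^{-1})$ mass corrections, and the reduction of the same-sign case via $A(\eta)+A(-\eta)=2M\beta/\la\eta\ra_M$ together with $\angle(\xi,-\eta)=\angle(-\xi,\eta)$ is sound. The paper itself does not prove this lemma but quotes it from \cite{BH-CD3,BH-CD2,DaFoSe}, and your proof is essentially the standard symbol computation used there, so there is nothing further to flag.
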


We now explain heuristically why this is useful here, see Lemma
\ref{lem:stable} for the technical result which will be used in the
nonlinear analysis. By \eqref{eq:com} it follows that for $s_1,s_2 \in \{+,-\}$
\begin{align*}
  & \mathcal{F}_x\la \Pi_{s_1} \psi_1,\beta\Pi_{s_2} \psi_2\ra(\xi)=\int\limits_{\xi=\xi_1-\xi_2}\la \Pi_{s_1}(\xi_1) \widehat{\psi_1}(\xi_1),\beta\Pi_{s_2}(\xi_2) \widehat{\psi_2}(\xi_2)\ra d\xi_1d\xi_2\\
  =& \int\limits_{\xi=\xi_1-\xi_2} \left( \la \beta\Pi_{-s_2}(\xi_2)\Pi_{s_1}(\xi_1) \widehat{\psi_1}(\xi_1),\widehat{\psi_2}(\xi_2)\ra 
  +s_2 M
  \la\xi_1\ra_M^{-1}\la\beta\Pi_{s_1}(\xi_1) \widehat{\psi_1}(\xi_1),\widehat{\psi_2}(\xi_2)\ra \right)  d\xi_1d\xi_2.
\end{align*}
Hence, smallness of the angle $\angle(s_1\xi_1,s_2\xi_2)$ can be
exploited as long as it exceeds
$\max(\la\xi_1\ra_M^{-1},\la\xi_2\ra_M^{-1})$.  See
\cite[p.~885]{DaFoSe} for the analogue of this in the massless case,
where we have $\Pi_-^0(\xi_1) \Pi_+^0(\xi_2) =0$ if $\angle
(\xi_1,\xi_2)=0$, which makes the null structure effective at all
angular scales. In the massive case $M>0$ the null-structure does not
bring gains beyond $\max(\la\xi_1\ra_M^{-1},\la\xi_2\ra_M^{-1})$.  To
compensate for this we need to use that there are no resonances
present in \eqref{DKGf}.

\subsection{Modulation analysis}\label{subsect:mod}

As detailed in \cite{BH-DKG}, the nonlinearity in the DKG system exhibits two key features:  
lack of resonant interactions and a correlation between smallness of the maximal modulation and angular constraints.

There are many instances in the literature where the lack of resonances has been exploited, see for instance  
 \cite{S12,DeFa,Ge}. In our context, the first author and Herr had exploited this phenomena for the cubic Dirac in \cite{BH-CD3,BH-CD2}
and for the DKG system in three dimensions in \cite{BH-DKG}. Below we recall the two results from \cite{BH-DKG} which quantify the phenomenas mentioned above; the corresponding claims in  \cite{BH-DKG} are in three dimensions and they imply the two dimensional analogues stated below.

The first result is  \cite[Lemma 2.2]{BH-DKG} and provides lower bounds for the resonance function.

\begin{lem}\label{lem:res}
  Fix $0<m<2M$. For $s_1,s_2\in \{+,-\}$ define the resonance function
  \begin{equation}\label{eq:res-fct}
    \mu^{s_1,s_2}(\xi_1,\xi_2):=\la\xi_1-\xi_2\ra_m+s_1\la \xi_1\ra_M-s_2\la \xi_2\ra_M.
  \end{equation}
  Then, we have the following bounds:

  {\it Case 1:} If
  \begin{enumerate} \item[a)]$s_1=+, s_2=-$ or
  \item[b)] $s_1=-,s_2=+$ and $\la\xi_1-\xi_2\ra_m\ll \min(\la
    \xi_1\ra_M, \la \xi_2\ra_M)$,
  \end{enumerate}
  then
  \begin{equation}\label{eq:high-mod}
    |\mu^{s_1,s_2}(\xi_1,\xi_2)|\gs \max(\la \xi_1-\xi_2\ra,\la \xi_1\ra,\la \xi_2\ra)
  \end{equation}

  {\it Case 2:} If \begin{enumerate} \item[a)] $s_1=s_2$ or
  \item[b)] $s_1=-,s_2=+$ and $\la\xi_1-\xi_2\ra_m\gs \min(\la
    \xi_1\ra_M, \la \xi_2\ra_M)$,
  \end{enumerate}
  then
  \begin{equation}\label{eq:mod-angle}
    \begin{split}
      |\mu^{s_1,s_2}(\xi_1,\xi_2)|\gs_{m,M} &\frac{\la \xi_1\ra \cdot
        \la \xi_2\ra}{\la \xi_1-\xi_2\ra}\angle(s_1\xi_1,s_2\xi_2)^2
    \end{split}
  \end{equation}
  With any choice of signs, we have both
  \begin{equation}\label{eq:gen-lb}
    |\mu^{s_1,s_2}(\xi_1,\xi_2)|\gs_{m,M}\min(\la \xi_1\ra , \la \xi_2\ra )\angle(s_1\xi_1,s_2\xi_2)^2,
  \end{equation}
  and the \emph{non-resonance} bound
  \begin{equation}\label{eq:non-res}
    |\mu^{s_1,s_2}(\xi_1,\xi_2)|\gs_{m,M} \max(\la \xi_1-\xi_2\ra^{-1},\la \xi_1\ra^{-1},\la \xi_2\ra^{-1}).
  \end{equation}
\end{lem}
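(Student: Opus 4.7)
The plan is to reduce all four estimates to a single algebraic identity. Writing $A = \langle \xi_1 - \xi_2 \rangle_m$, $B = \langle \xi_1\rangle_M$, $C = \langle \xi_2\rangle_M$ for brevity, I will rely on the factorization
\[
(A + s_1 B - s_2 C)(A - s_1 B + s_2 C) = A^2 - B^2 - C^2 + 2 s_1 s_2 BC.
\]
A direct computation gives $A^2 - B^2 - C^2 = -2\xi_1\cdot\xi_2 + m^2 - 2M^2$, and the AM-GM inequality $(|\xi_1|^2 + M^2)(|\xi_2|^2 + M^2) \geq (|\xi_1||\xi_2| + M^2)^2$ yields $BC \geq |\xi_1||\xi_2| + M^2$. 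Substituting these along with $\xi_1\cdot\xi_2 = |\xi_1||\xi_2|\cos\angle(\xi_1,\xi_2)$, the right-hand side reorganizes into a transparent expression in the sizes, the mass constants, and the angle $\angle(s_1\xi_1, s_2\xi_2)$. Since every quantity in the statement depends only on $|\xi_1|, |\xi_2|, |\xi_1 - \xi_2|$, and this angle, the whole analysis mirrors the three-dimensional version in \cite[Lemma 2.2]{BH-DKG} and transfers to two dimensions without change.

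Case 1a is immediate, since $\mu^{+,-} = A + B + C$ is a sum of three positive terms and each summand already controls the required maximum. For Case 1b the hypothesis $A \ll \min(B, C)$ forces $|\xi_1 - \xi_2| \ll \min(|\xi_1|, |\xi_2|)$, hence $|\xi_1| \approx |\xi_2|$ and $\min(\langle \xi_1\rangle, \langle \xi_2\rangle) \approx \max(\langle \xi_1 - \xi_2\rangle, \langle \xi_1\rangle, \langle \xi_2\rangle)$; then $|\mu^{-,+}| = B + C - A \gtrsim \min(B,C)$ gives \eqref{eq:high-mod}.

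For Case 2 I plug into the factorization. When $s_1 s_2 = 1$ the right-hand side becomes $m^2 + 2|\xi_1||\xi_2|(1 - \cos\angle(s_1\xi_1, s_2\xi_2))$; when $s_1 s_2 = -1$ (only the $s_1 = -, s_2 = +$ subcase of 2b) it becomes $-(4M^2 - m^2) - 2|\xi_1||\xi_2|(1 - \cos\angle(s_1\xi_1, s_2\xi_2))$, which is strictly negative precisely because $m < 2M$. Using $1 - \cos\varphi \gtrsim \varphi^2$ for $\varphi \in [0,\pi]$ and bounding the conjugate factor $|A - s_1 B + s_2 C|$ by $\langle \xi_1 - \xi_2\rangle$ in Case 2a (via the mean-value estimate $|B - C| \leq |\xi_1 - \xi_2|$), and by $\max(\langle \xi_1\rangle, \langle \xi_2\rangle) \approx \langle \xi_1\rangle\langle \xi_2\rangle / \langle \xi_1 - \xi_2\rangle$ in Case 2b (which holds under the hypothesis $A \gtrsim \min(B,C)$), I obtain \eqref{eq:mod-angle}.

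The uniform bound \eqref{eq:gen-lb} then follows by combining Cases 1 and 2, using $\angle(s_1\xi_1, s_2\xi_2) \leq \pi$ in Case 1 and $\langle\xi_1\rangle\langle \xi_2\rangle/\langle \xi_1 - \xi_2\rangle \gtrsim \min(\langle \xi_1\rangle, \langle \xi_2\rangle)$ in Case 2. For \eqref{eq:non-res} the same family of factorizations yields lower bounds on $|\mu|$ proportional to one of the strictly positive constants $m^2$, $4M^2 - m^2$, or $m(2M - m)$, divided by a conjugate factor. The main technical obstacle lies here: one must choose the right factorization---among $\mu\cdot(A - s_1 B + s_2 C)$, $\mu\cdot(A + s_1 B + s_2 C)$, and $\mu\cdot(-A + s_1 B + s_2 C)$---in each sign and size regime so that the conjugate factor is controlled by $\min(\langle \xi_1 - \xi_2\rangle, \langle \xi_1\rangle, \langle \xi_2\rangle)$. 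The necessary case analysis is guided by the elementary fact that whenever one of $|\xi_1|, |\xi_2|, |\xi_1 - \xi_2|$ is much smaller than the other two, the latter two are comparable.
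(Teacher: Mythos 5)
The paper does not reprove this lemma: it simply imports \cite[Lemma 2.2]{BH-DKG}, observing that the three--dimensional statement implies the two--dimensional one (the resonance function depends only on $|\xi_1|,|\xi_2|,|\xi_1-\xi_2|$ and the angle, so one may embed $\R^2\subset\R^3$). Your proposal is therefore a genuinely different, self-contained route, and its skeleton --- the conjugation identity $(A+s_1B-s_2C)(A-s_1B+s_2C)=A^2-B^2-C^2+2s_1s_2BC$ together with the mass gap $2M>m$ --- is the right one; Cases 1a, 1b and the deduction of \eqref{eq:gen-lb} from \eqref{eq:high-mod} and \eqref{eq:mod-angle} are fine as written.

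However, the derivation of \eqref{eq:mod-angle} has a genuine gap in the unbalanced regime $\min(|\xi_1|,|\xi_2|)\ll 1\ll\max(|\xi_1|,|\xi_2|)$. After the AM--GM step you retain only $m^2+2|\xi_1||\xi_2|(1-\cos\varphi)$ (resp.\ $(4M^2-m^2)+2|\xi_1||\xi_2|(1-\cos\varphi)$), and the conjugate factor you divide by is of size $\la\xi_1-\xi_2\ra\approx\max(\la\xi_1\ra,\la\xi_2\ra)$ in that regime; the resulting bound does not dominate $\frac{\la\xi_1\ra\la\xi_2\ra}{\la\xi_1-\xi_2\ra}\varphi^2\approx\varphi^2$ there. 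Concretely, for $s_1=s_2=+$, $m=M=1$, $\xi_1=(R^{-1},0)$, $\xi_2=(-R,0)$, your chain gives only $|\mu|\gs R^{-1}$, while the right-hand side of \eqref{eq:mod-angle} is $\approx\pi^2$ (the inequality itself is true there, since in fact $|\mu|\approx 1$). The problem is that AM--GM discards exactly the term that rescues this regime: keeping $2(BC-|\xi_1||\xi_2|-M^2)\gs_M(|\xi_1|-|\xi_2|)^2/(\la\xi_1\ra\la\xi_2\ra)$ in the product (or treating $\min(|\xi_1|,|\xi_2|)\ls 1$ separately, e.g.\ via $\la\xi_1-\xi_2\ra_m+\la\xi_1\ra_M\geq\la\xi_2\ra_{m+M}$, which gives $|\mu|\gs_{m,M}1$ there) repairs the argument. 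Relatedly, the comparability you invoke in Case 2b is backwards: under that hypothesis one has $\la\xi_1-\xi_2\ra\approx\max(\la\xi_1\ra,\la\xi_2\ra)$, so $\la\xi_1\ra\la\xi_2\ra/\la\xi_1-\xi_2\ra\approx\min(\la\xi_1\ra,\la\xi_2\ra)$, not $\max$; the correct bookkeeping is conjugate $\approx\max$ and required product $\gs\la\xi_1\ra\la\xi_2\ra\varphi^2$, which again needs the discarded term when one frequency is small. Finally, \eqref{eq:non-res} is only a plan: the case analysis you yourself identify as the main technical obstacle (choosing the factorization so that the conjugate factor is controlled by the smallest of the three weights, e.g.\ via $|B-C|\leq|\xi_1-\xi_2|$ when $\la\xi_1-\xi_2\ra$ is smallest, and the strictly positive constants $m^2$, $4M^2-m^2$, $m(2M-m)$ otherwise) is stated but not carried out, so the non-resonance bound is not actually proved in the proposal.
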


\begin{rmk}\label{rmk:m}
From now on, we fix $M=m=1$ in order to simplify the exposition. In view
of Lemma \ref{lem:res}, it will be obvious that all arguments carry
over to the case $2M>m>0$ with modified (implicit) constants depending
on $m,M$.
\end{rmk}

\begin{lem} \label{lem:mod} Let $s_1,s_2\in \{+,-\}$. Consider
  $k,k_1,k_2\in \N_0, j,j_1,j_2 \in \Z$, and $\phi=\tilde{P}_{k}
  \tilde{Q}^{+}_{j}\phi$, $u_i=\tilde{P}_{k_i}
  \tilde{Q}^{s_i}_{j_i}u_i$.

  i) If $\max(j,j_1,j_2) \prec -\min(k,k_1,k_2)$, we have
  \begin{equation}\label{eq:int-zero}
    \int_{\R^{1+2}}\phi \cdot u_1\overline{u_2} \, dtdx =0.
  \end{equation}

  ii) {\it Case 1:} Suppose that \begin{align*}& s_1=+,s_2=-\\
\text{ or }\qquad & s_1=-,s_2=+ \text{ and }k\prec \min(k_1,k_2).\end{align*}
If $\max(j,j_1,j_2)\prec \max(k,k_1,k_2)$,
  then, \eqref{eq:int-zero} holds true.

  {\it Case 2:} Suppose that \begin{align*}&s_1=s_2\\
\text{ or }\qquad & s_1=-,s_2=+ \text{ and }k\succeq
  \min(k_1,k_2).
\end{align*}
If $l\geq 1$, $\kappa_1,\kappa_2 \in
  \mathcal{K}_{l}$ with $\dist(s_1\kappa_1,s_2\kappa_2)\geq 2^{-l}$
  and $\max(j,j_1,j_2)\prec k_1+k_2-k-2l$, then
  \begin{equation}\label{eq:int-zero-caps}
    \int_{\R^{1+2}}\phi \cdot \tilde{P}_{\kappa_1} u_1\overline{\tilde{P}_{\kappa_2}u_2} \, dtdx =0.
  \end{equation}
iii) Assume that $k\prec \min(k_1,k_2), |k_1-k_2| \leq 10$ and $l= k_1 -k$. 
Then, if $\kappa_1,\kappa_2 \in  \mathcal{K}_{l}$ with $\dist(\kappa_1, \kappa_2) \succ 2^{-l}$, the following holds true
 \begin{equation}\label{eq:int-zero-caps-2}
    \int_{\R^{1+2}}\phi \cdot \tilde{P}_{\kappa_1} u_1\overline{\tilde{P}_{\kappa_2}u_2} \, dtdx =0.
  \end{equation}
\end{lem}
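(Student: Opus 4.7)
My plan is to show each of the three integrals vanishes by proving that the joint Fourier support of the integrand is empty. By Plancherel (and the identity $\widehat{\bar u}(\tau,\xi)=\overline{\hat u(-\tau,-\xi)}$), the integral $\int \phi\cdot u_1\overline{u_2}\,dtdx$ becomes, up to constants, an integral over $(\tau_1,\xi_1,\tau_2,\xi_2)$ of $\hat\phi(\tau_2-\tau_1,\xi_2-\xi_1)\,\hat u_1(\tau_1,\xi_1)\,\overline{\hat u_2(\tau_2,\xi_2)}$. The three modulation conditions yield $|(\tau_2-\tau_1)+\la\xi_2-\xi_1\ra|\ls 2^j$, $|\tau_1+s_1\la\xi_1\ra|\ls 2^{j_1}$, and $|\tau_2+s_2\la\xi_2\ra|\ls 2^{j_2}$. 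Adding the first two and subtracting the third, the $\tau$-variables cancel exactly and one is left with precisely $\mu^{s_1,s_2}(\xi_1,\xi_2)$ from \eqref{eq:res-fct}. Hence the integrand is supported in $\{|\mu^{s_1,s_2}(\xi_1,\xi_2)|\ls 2^{\max(j,j_1,j_2)}\}$, and each part reduces to contradicting this inclusion via Lemma \ref{lem:res}.

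For part i), I would apply the non-resonance bound \eqref{eq:non-res}, which gives $|\mu^{s_1,s_2}|\gs 2^{-\min(k,k_1,k_2)}$; the hypothesis $\max(j,j_1,j_2)\prec -\min(k,k_1,k_2)$ then yields a strict contradiction. For part ii) Case 1, the sign/size hypotheses match Case 1 of Lemma \ref{lem:res} exactly, so \eqref{eq:high-mod} gives $|\mu^{s_1,s_2}|\gs 2^{\max(k,k_1,k_2)}$, which clashes with $\max(j,j_1,j_2)\prec \max(k,k_1,k_2)$. For part ii) Case 2, the angular projections $\tilde{P}_{\kappa_1},\tilde{P}_{\kappa_2}$ combined with $\dist(s_1\kappa_1,s_2\kappa_2)\geq 2^{-l}$ force $\angle(s_1\xi_1,s_2\xi_2)\gs 2^{-l}$ on the support, so \eqref{eq:mod-angle} gives $|\mu^{s_1,s_2}|\gs 2^{k_1+k_2-k-2l}$, contradicting $\max(j,j_1,j_2)\prec k_1+k_2-k-2l$.

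Part iii) is purely geometric and uses no modulation information. Since $\xi=\xi_2-\xi_1$ with $|\xi|\approx 2^k$ and $|\xi_i|\approx 2^{k_i}$, and $k\prec\min(k_1,k_2)$ with $|k_1-k_2|\leq 10$, the triangle $(\xi_1,\xi_2,\xi)$ forces $\xi_1$ and $\xi_2$ to be nearly parallel: $\angle(\xi_1,\xi_2)\ls |\xi|/|\xi_1|\approx 2^{k-k_1}=2^{-l}$. This conflicts directly with $\dist(\kappa_1,\kappa_2)\succ 2^{-l}$, so the support is again empty.

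The only real concern is bookkeeping: matching the sign/frequency configuration to the correct case of Lemma \ref{lem:res}, and converting cap separations into angular lower bounds on the frequency variables (for which one uses that the cones $\Gamma_{\kappa_i}$ have angular width $O(2^{-l})$ in an essential but elementary way). Once Lemma \ref{lem:res} is in hand, no serious obstacle arises; the entire argument is a support-emptiness check driven by the identity $(\tau_2-\tau_1)+\la\xi_2-\xi_1\ra + (\tau_1+s_1\la\xi_1\ra) - (\tau_2+s_2\la\xi_2\ra)=\mu^{s_1,s_2}(\xi_1,\xi_2)$.
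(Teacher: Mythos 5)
Your argument is correct and is essentially the paper's (and \cite{BH-DKG}'s) own: one writes the trilinear integral via the Fourier identity \eqref{eq:ft}, observes that the three modulation localizations force $|\mu^{s_1,s_2}(\xi_1,\xi_2)|\ls 2^{\max(j,j_1,j_2)}$ on the support, and contradicts this with the appropriate lower bound from Lemma \ref{lem:res} (non-resonance for i), \eqref{eq:high-mod} for ii) Case 1, \eqref{eq:mod-angle} plus the cap separation for ii) Case 2), while iii) is the same purely geometric support check the paper attributes to \eqref{eq:ft}. No issues beyond the constant bookkeeping you already flag.
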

We remark that while the result in iii) is not provided in \cite[Lemma 2.4]{BH-DKG}, its statement appears in the body of the proof the Lemma in that paper. It follows from the simple identity
  \begin{equation}\label{eq:ft}
    \int_{\R^{1+2}}\phi \cdot u_1\overline{u_2} \, dtdx
    =
    \iint \widehat{\phi}(\zeta_2-\zeta_1)\widehat{u_1}(\zeta_1)\overline{\widehat{u_2}}(\zeta_2)d\zeta_1d\zeta_2
  \end{equation}
where $\zeta_j=(\tau_j,\xi_j)$.

\section{Function spaces and linear estimates}\label{sect:fs-linear}

In this section, we develop the theory for the following linear PDE, which is the basic model used in \eqref{DKGf}, 
\[
-i\partial_t u \pm \la D \ra u=f, \quad u(0)=u_0.
\]
First we construct the resolution space which contains enough structure for the later nonlinear analysis, and then we establish the basic linear estimates on $u$ in the resolution space in terms of the initial data $u_0$ and the forcing $f$; the linear estimate is stated in a dual formulation (in terms of information on $f$) which is beneficial in the later nonlinear analysis.

\subsection{Resolution space} \label{subsec:ressp}

Our resolution space contains the following structures:

\begin{itemize}

\item \bf Strichartz structures. \rm These are based on the standard Strichartz estimates available for the $2D$
Klein-Gordon equation. Just as in \cite{BH-DKG} we use localized versions of these estimates. 

\item \bf $\dot{X}^{\pm,\frac12,\infty}$ type spaces. \rm These are the standard $X^{s,b}$ type structures adapted to our context; basically they allow the high modulation part of the function to be measured in $L^2_{t,x}$ type spaces with a weight depending on the modulation.

\item \bf $\dot{X}_{\frac43,4}^{\pm,\frac12,1}$ structure. \rm These are the new structures which we introduce in this paper. Just like the $\dot{X}^{\pm,\frac12,\infty}$ structure mentioned above, their role is to highlight better space-time integrability in high modulation when compared with what the Strichartz estimates provide. These spaces provide more ``time" integrability, $L^\frac43_t$ to be more precise, than the $\dot{X}^{\pm,\frac12,\infty}$ (which provides only $L^2_t$); the price to pay in using these structures is that, since they are not based on $L^2_{t,x}$ building blocks, one cannot rely on Plancherel to have a good representation on the Fourier side and this makes their use/characterization more technical. 

\end{itemize}

We now turn to the details of this construction. For $1\leq p\leq \infty$, $b \in \R$, we define
\[\|f\|_{\dot{X}^{\pm,b,p}}=\big\|\big(2^{bj}\|Q_{j}^\pm
f\|_{L^2}\big)_{j \in \Z }\big\|_{\ell^p},\]

The low frequency part will be treated as a whole, that is we define
\[
\| f \|_{S^\pm_{\leq 0}} = \| f \|_{L^\infty_t L^2_x} + \| f \|_{L^\frac83_t
  L^8_x} + \|f \|_{\dot{X}^{\pm,\frac12,\infty}} + \sum_{j \succeq 0} 2^\frac{j}2 \|Q^\pm_j f \|_{L^\frac43_t L^4_x}.
\]
By interpolation, the space above provides a wide range of Strichartz
estimates for the Sch\"odinger equation on $\R^2$, precisely any $L^p_t L^q_x$ with $\frac1p +\frac1q=\frac12$ and $\frac83 \leq p \leq +\infty$; 
in fact, our theory provides all the known Strichartz estimates for the Sch\"odinger equation on $\R^2$, that is for any $2 < p \leq \infty$, but the ones listed above are sufficient in this paper. This is because, in the low frequency regime,  the Klein-Gordon equation  behaves like the
Sch\"odinger equation.

The third component is the standard modification of the $X^{s,b}$ structure, which measures pieces localized at modulation $\approx 2^j$ in $L^2_{t,x}$ spaces with the appropriate weight. The last structure is novel for this problem. Scaling-wise it sits at the same level as the $L^2_{t,x}$ space used in the  $\dot{X}^{\pm,\frac12,\infty}$ space, but it does bring in more integrability in the time variable which will be key in closing our arguments. 

In high frequency, the Klein-Gordon equation is of wave type and the
Strichartz estimates should reflect that.  Moreover we need some
refinement of the standard Strichartz estimates. We recall from \cite{BH-CD2} the standard Strichartz estimates for the Klein-Gordon equation.

\begin{pro}\label{lem:full-str}
  Let $p,q\geq 2$ such that $(p,q)$ is a Schr\"odinger-admissible
  pair, i.e.\
  \[
  (p,q)\ne (2,\infty), \; \frac{1}{p}+\frac{1}{q}= \frac12,
  \text{and }s=1-\frac{2}{q}.
  \] 
  {\rm i)} Then, for any $k \geq 0$, we have
  \begin{equation}\label{eq:full-str}
    \|P_k e^{\pm it \la D \ra } f \|_{L^p_t(\R;L^q_x(\R^2))} \ls 2^{k s} \|P_k f\|_{L^2}.
  \end{equation}
  {\rm ii)} Moreover,  for any $k \geq 0$, we have
  \begin{equation}\label{eq:full-str-loc}
    \sup_{1\leq l \leq k+10} \Big(\sum_{\kappa \in \mathcal{K}_l} \|P_k P_\kappa e^{\pm it \la D \ra } f \|_{L^p_t(\R;L^q_x(\R^2))}^2 \Big)^{\frac{1}{2}} 
    \ls 2^{ks} \|f\|_{L^2}.
  \end{equation}
\end{pro}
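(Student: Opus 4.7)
The plan is to derive \rm{i)} from a frequency-localized dispersive estimate via the Keel--Tao $TT^*$ machinery, and then obtain \rm{ii)} as a consequence of \rm{i)} together with the $L^2$-orthogonality of the angular cap decomposition.

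The main step for \rm{i)} is the dispersive bound
\[
\|P_k e^{\pm it\langle D\rangle} g\|_{L^\infty_x} \lesssim (1+|t|)^{-1}\, 2^{2k}\,\|g\|_{L^1_x},\qquad k \geq 0.
\]
To prove this, I would write the convolution kernel as $K_t(x)=\int e^{i(x\cdot\xi \pm t\sqrt{1+|\xi|^2})}\rho_k(\xi)\,d\xi$. For $|t| \lesssim 1$ the trivial bound $\|K_t\|_{L^\infty_x} \lesssim \int \rho_k \lesssim 2^{2k}$ suffices. For $|t| \gg 1$ I would apply two-dimensional stationary phase: writing $\xi = r\omega$, the phase $\phi(\xi)=\sqrt{1+|\xi|^2}$ has Hessian with radial eigenvalue $(1+r^2)^{-3/2}$ and angular eigenvalue $(1+r^2)^{-1/2}$, which at $r \approx 2^k$ are $\approx 2^{-3k}$ and $\approx 2^{-k}$ respectively, so $|\det(\mathrm{Hess}\,\phi)|^{-1/2} \approx 2^{2k}$; two-dimensional stationary phase then yields $|K_t(x)| \lesssim |t|^{-1}\, 2^{2k}$ whenever the stationary point lies inside the support of $\rho_k$, and non-stationary-phase/integration by parts gives arbitrary polynomial decay outside. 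Interpolating with the Plancherel identity $\|e^{\pm it\langle D\rangle} g\|_{L^2}=\|g\|_{L^2}$ yields, for $q \in [2,\infty)$,
\[
\|P_k e^{\pm it\langle D\rangle}\|_{L^q_x \leftarrow L^{q'}_x} \lesssim (1+|t|)^{-(1-2/q)}\, 2^{2k(1-2/q)}.
\]
A standard $TT^*$ argument combined with Hardy--Littlewood--Sobolev in the time variable (with kernel $|t-s|^{-(1-2/q)}$, whose HLS exponent matches Schr\"odinger admissibility $\tfrac1p+\tfrac1q=\tfrac12$) gives
\[
\|P_k e^{\pm it\langle D\rangle} f\|_{L^p_t L^q_x} \lesssim 2^{k(1-2/q)}\,\|P_k f\|_{L^2} = 2^{ks}\,\|P_k f\|_{L^2},
\]
with the exponent $ks$ (rather than $2ks$) arising because $\|T\| = \|TT^*\|^{1/2}$. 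The endpoint $(p,q)=(2,\infty)$ is excluded by hypothesis, so no additional endpoint analysis is required.

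For \rm{ii)} I would simply apply \rm{i)} with the datum $P_\kappa f$ in place of $f$. Since $P_\kappa$, $P_k$, and $e^{\pm it\langle D\rangle}$ all commute as spatial Fourier multipliers, we have $P_k P_\kappa e^{\pm it\langle D\rangle} f = e^{\pm it\langle D\rangle}(P_k P_\kappa f)$, and \eqref{eq:full-str} applied to the frequency-localized function $P_k P_\kappa f$ yields
\[
\|P_k P_\kappa e^{\pm it\langle D\rangle} f\|_{L^p_t L^q_x} \lesssim 2^{ks}\,\|P_k P_\kappa f\|_{L^2}.
\]
Squaring, summing over $\kappa \in \mathcal{K}_l$, and using the finite-overlap orthogonality of $\{P_\kappa\}$ in $L^2$, namely $\sum_{\kappa}\|P_k P_\kappa f\|_{L^2}^2 \lesssim \|P_k f\|_{L^2}^2 \leq \|f\|_{L^2}^2$, gives the claimed bound uniformly in $l$. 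The restriction $l \leq k+10$ plays no role in the proof and is imposed only for later use in the nonlinear analysis.

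The main obstacle is verifying the dispersive estimate itself: one must carry out the Hessian eigenvalue computation at $r \approx 2^k$ carefully, control the transition between the stationary-phase regime $|x| \ll |t|$ and the non-stationary regime where integration by parts applies, and check that the case $k=0$ (where $\langle\xi\rangle \approx 1$ and both Hessian eigenvalues are of unit size) reduces gracefully to the familiar two-dimensional Schr\"odinger dispersive estimate $|t|^{-1}$. Once the dispersive bound is established, the remainder of the argument is a routine application of Keel--Tao plus orthogonality and involves no additional difficulty.
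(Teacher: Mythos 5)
Your proposal is correct and follows essentially the same route as the paper, which offers no independent proof but cites \cite{BH-CD2} and notes that the argument there is "simply based on kernel estimates and a $TT^*$ type argument" for free solutions — i.e., exactly your frequency-localized dispersive bound plus Keel--Tao, with the cap-summed estimate \eqref{eq:full-str-loc} then following from \eqref{eq:full-str} and $L^2$-orthogonality of the $P_\kappa$, as you say. The only point needing the care you already flag is that in the intermediate regime $1\ls |t|\ls 2^k$ the radial oscillation is not fully resolved, so one obtains the (stronger) wave-type bound $2^{3k/2}|t|^{-1/2}$ there and checks it is dominated by $2^{2k}|t|^{-1}$, rather than quoting two-dimensional stationary phase directly.
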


We remark the in part i) above we can freely replace  $P_0$ with  $P_{\leq 0}$; there is no need to consider the equivalent of part ii) for the setup with $P_{\leq 0}$.  

We note that while in \cite{BH-CD2} the statement is made for functions in the more complicated spaces $S_k^\pm$, the proof there begins by establishing the estimates for free solutions (simply based on kernel estimates and a $TT^*$ type argument), which is exactly what we claim above.

Now, we consider functions in $f\in L^\infty_t(\R;L^2(\R^2;\C^d)$ (we use $d=1$ for the Klein-Gordon part and $d=2$ for the Dirac part).
For $k \in \Z, k \geq 0$ and $l, k' \in \Z, 0 \leq l \leq k$, we define
\[
\| f \|_{L^p_t L^q_x[k;l]} : = \left(  \sum_{\kappa \in \mathcal{K}_l}  \| P_{\kappa}  f \|^2_{L^p_t L^q_x} \right)^\frac12.
\]

For $k \geq 0$, we define
\begin{equation}\label{eq:sk}
\begin{split}
  \| f \|_{S_k^\pm}  & = \| f \|_{L^\infty_t L^2_x} + \| f \|_{\dot X^{\pm,\frac12,\infty}} +  \\
  &  +  \sup_{j \in \Z \cup \{+\infty\}} \sup_{0 \leq l \leq k} \left( 2^{-\frac{3k}4} \| Q^\pm_{\leq j} f \|_{L^\frac83_t L^8_x[k;l]} + 2^{-\frac{k}4} \| Q^\pm_{\leq j} f \|_{L^8_t L^\frac83_x[k;l]} \right).
\end{split}
\end{equation}
In the above we use the convention that $Q_{\leq +\infty}=I$, the identity operator. Thus, the $S_k^\pm$ structure gives the standard cap-localized Strichartz estimates in $L^\frac83_t L^8_x[k;l]$ and $L^8_t L^\frac83_x[k;l]$ not only for $f$, but also for $Q^\pm_{\leq j} f$. This is helpful, since if we did not include the operators $Q^\pm_{\leq j}$ in the definition above, then we would have to deal with the stability of $S_k^\pm$ under $ Q^\pm_{\leq j}$; this is possible just as it was done in \cite{BH-DKG}, but in our context we would encounter some unpleasant logarithmic terms whose fixing would require unnecessary effort. We also note that in practice we need only the range $-k \preceq j \leq k$ and the value $j=+\infty$.

In addition, we involve the following high-modulation structure. Given $k \in \Z$ and $j \in \Z, j \succeq -k$ we let $l$ be given by the relation  $j=k-2l$. With this choice we let
\[
\begin{split}
\| Q^\pm_j f\|_{X^{\pm, b}_{k,p,q}} & = 2^{jb}  \left( \sum_{\kappa \in \mathcal{K}_{l}: 2l+j=k} \|Q^\pm_j P_\kappa f \|_{L^p_t L^q_x}^2 \right)^\frac12, \quad - k \preceq j \leq k, \\
\| Q^\pm_j f\|_{X^{\pm, b}_{k,p,q}} & = 2^{jb} \|Q^\pm_j  f \|_{L^p_t L^q_x}, \quad j \geq k. 
\end{split}
\]
Based on this, for $1 \leq r \leq \infty$, we define
\[
\|f\|_{X^{\pm, b, r}_{k,p,q}} = \left( \sum_{j \succeq -k} \| Q^\pm_j f\|^r_{X^{\pm, b}_{k,p,q}} \right)^\frac1r,
\]
with the usual modification when $r=\infty$.  We remark that $\| \cdot \|_{X^{\pm, b, r}_{k,p,q}}$ is a semi-norm since functions at very low modulations have zero output when measured this way. 

We note that if $p=q=2$ and $r=\infty$, this resembles the standard structure $X^{\pm, b, \infty}$ introduced above for $b=\frac12$, but there are some important differences. There is the threshold $j \succeq -k$ and there is the cap structure when $j \leq k$, none of which appears in $X^{\pm, \frac12, \infty}$; the cap structure is less of an issue due to Plancherel, but this is not the case for other pairs $(p,q)$.

 In this paper we mostly work with $X^{\pm, b, r}_{k,p,q}$ with the choice $(p,q)=(\frac43,4)$ and its dual pair $(p,q)=(4,\frac43)$, and $r=1$ and $r=\infty$. 

Using these structures brings the following challenges: if $j \prec k$, one does not have uniform $L^1_{t,x}$ bounds on kernel of $Q_j^\pm$ and the support of the set $|\tau \pm \la \xi \ra| \approx 2^j$ becomes sensitive to the curvature properties of the characteristic surface $\tau = \mp \la \xi \ra$; 
this is not a problem in the context of $\dot X^{\pm,\frac12,\infty}$ where one can rely on Plancherel instead of kernel bounds. In Lemma \ref{kerb} below we investigate such bounds.

\begin{lem}\label{kerb} 
Assume that $j \leq k$, $2l=k-j$ and $\kappa \in \mathcal{K}_l$. Then the kernels  $K^{\pm}_{k,j,\kappa}$ and $\bar K^{\pm}_{k,j,\kappa}$ of the multipliers $P_\kappa Q_j^\pm P_k$, respectively  $\frac{ P_\kappa Q_j^\pm P_k}{-i \partial_t \pm \la D \ra}$, satisfy the following bounds
\begin{equation}
\| K^\pm_{k,j,\kappa} \|_{L^1(\R^3)} \ls 1, \quad \| \bar K^{\pm}_{k,j,\kappa} \|_{L^1(\R^3)} \ls 2^{-j}
\end{equation}
where the constant used in $\ls$ is independent of $k,j,\kappa$. 
\end{lem}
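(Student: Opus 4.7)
The plan is to perform a change of variable in the modulation to decouple the $(\tau,\xi)$-dependence in the symbol; this factors each kernel as the product of a one-dimensional temporal bump and a cap-localized half-wave kernel in space, both of which can be controlled separately. Writing
\[
K^{\pm}_{k,j,\kappa}(t,x) = \int_{\R^{1+2}} e^{i(t\tau + x\cdot\xi)}\rho_j(\tau\pm\langle\xi\rangle)\rho_k(|\xi|)\eta_\kappa(\xi)\,d\tau\,d\xi,
\]
the substitution $\sigma=\tau\pm\langle\xi\rangle$ has unit Jacobian, the phase becomes $t\sigma+(x\cdot\xi\mp t\langle\xi\rangle)$, and the symbol tensorizes, giving
\[
K^{\pm}_{k,j,\kappa}(t,x) = \phi_j(t)\cdot\Phi^{\mp}_{k,\kappa}(t,x),
\]
where $\phi_j(t)=\int_{\R} e^{it\sigma}\rho_j(\sigma)\,d\sigma$ and $\Phi^{\mp}_{k,\kappa}(t,x)=\int_{\R^2} e^{i(x\cdot\xi\mp t\langle\xi\rangle)}\rho_k(|\xi|)\eta_\kappa(\xi)\,d\xi$. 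By scaling, $\phi_j(t) = 2^j\phi_0(2^j t)$ with $\phi_0$ Schwartz, so $\int_{\R}(1+2^j|t|)^C |\phi_j(t)|\,dt \lesssim_C 1$ for every $C \geq 0$; by Fubini it therefore suffices to show $\|\Phi^{\mp}_{k,\kappa}(t,\cdot)\|_{L^1_x}\lesssim (1+2^j|t|)^C$ for some fixed $C$.

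For the spatial factor, I rotate coordinates so that $\omega(\kappa)=e_1$; then $\rho_k(|\xi|)\eta_\kappa(\xi)$ is a smooth bump of size $\lesssim 1$, supported in a $2^k \times 2^{k-l}$ box aligned with the axes, with derivatives obeying $|\partial_{\xi_1}^a\partial_{\xi_2}^b[\rho_k\eta_\kappa]|\lesssim 2^{-ka-(k-l)b}$. On this support $\xi/\langle\xi\rangle=\omega(\kappa)+O(2^{-l})$, so $\xi_1$-derivatives of $e^{\mp it\langle\xi\rangle}$ contribute at most $|t|$ while $\xi_2$-derivatives contribute $|t|\cdot 2^{-l}$. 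Separate integration by parts in $\xi_1$ and $\xi_2$ against the oscillation $e^{ix\cdot\xi}$, at the scale adapted to each direction, delivers the pointwise wave-packet bound
\[
|\Phi^{\mp}_{k,\kappa}(t,x)|\lesssim 2^{2k-l}\,(1+2^j|t|)^{CN}\bigl(1+2^k|x_1\mp t|\bigr)^{-N}\bigl(1+2^{k-l}|x_2\mp v_\kappa(t)|\bigr)^{-N},
\]
where $v_\kappa(t)$ absorbs the $|t|\cdot 2^{-l}$ transverse group-velocity spread within the cap. Integrating in $x$ converts $2^{2k-l}$ to $2^{2k-l}\cdot 2^{-k}\cdot 2^{-(k-l)}=1$, leaving only the polynomial factor $(1+2^j|t|)^{CN}$, which is then absorbed by $\phi_j$ in the time integration.

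For $\bar{K}^{\pm}_{k,j,\kappa}$, the same factorization applies with $\rho_j(\sigma)$ replaced by $\rho_j(\sigma)/\sigma$. Since $|\sigma|\approx 2^j$ on the support, $\rho_j(\sigma)/\sigma = 2^{-j}\widetilde\rho(\sigma)$ for a bump $\widetilde\rho$ of the same type as $\rho_j$, whose inverse Fourier transform still has $L^1_t$-norm $\lesssim 1$, yielding the gain $2^{-j}$. The main technical obstacle is the careful integration-by-parts bookkeeping that certifies only polynomial (rather than exponential) growth of $\|\Phi^{\mp}_{k,\kappa}(t,\cdot)\|_{L^1_x}$ in $2^j|t|$; any polynomial rate suffices, since the Schwartz decay of $\phi_j$ absorbs it.
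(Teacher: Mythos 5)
Your proposal is correct, but it is organized differently from the paper's argument. You substitute $\sigma=\tau\pm\la\xi\ra$ to tensorize the symbol, writing $K^{\pm}_{k,j,\kappa}(t,x)=\phi_j(t)\,\Phi^{\mp}_{k,\kappa}(t,x)$ with $\phi_j=\check\rho_j$ and $\Phi^{\mp}_{k,\kappa}$ a cap-localized free Klein--Gordon kernel, then prove a wave-packet bound showing $\|\Phi^{\mp}_{k,\kappa}(t,\cdot)\|_{L^1_x}\ls (1+2^j|t|)^C$ and absorb the polynomial growth into the rapid decay of $\phi_j$ at time scale $2^{-j}$; the $2^{-j}$ gain for $\bar K$ falls out immediately since $\rho_j(\sigma)/\sigma$ is $2^{-j}$ times a unit-size bump at scale $2^j$. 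The paper instead never factorizes: it passes to tilted coordinates $(\tilde\tau,\tilde\xi_1,\xi_2)$ aligned with the parallelepiped support of the full space-time symbol (of sizes $\approx 2^{j}\times 2^k\times 2^{k-l}$), proves box-adapted derivative bounds \eqref{abound} via Fa\`a di Bruno bookkeeping, and invokes the standard $L^1$ kernel estimate for such symbols. Both arguments hinge on the same geometric fact encoded in $2l=k-j$ --- the curvature-induced spread within the cap occurs exactly at the modulation scale $2^j$ --- but your route trades the paper's purely Fourier-side symbol calculus for a physical-space, non-stationary-phase estimate that is non-uniform in $t$ and compensated by the temporal bump; this is a legitimate alternative, with the main labor shifted to the integration-by-parts bookkeeping you flag (checking that each step loses at most a power of $1+2^j|t|$, which works since the group velocity varies by $O(2^{j-k})$ radially and $O(2^{-l})$ transversally over the support). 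One small caveat, shared with the paper's proof: both arguments implicitly use $j\succeq -k$ (equivalently $l\ls k$), since otherwise the $2^{-k}$ contribution from $\la\xi\ra-|\xi|$ dominates $2^j$; this matches the range in which the lemma is actually applied.
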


We postpone the proof of this Lemma to the end of the section. 

The definition of $X^{\pm, b, r}_{k,p,q}$ contains a certain rigidity that relates the scale of the caps $\kappa$ to that of the frequency and modulation through the relation $2l=k-j$. This rigidity stems from Lemma \ref{kerb}. In our analysis, we need to use estimates at the level of $X^{\pm, b, r}_{k,p,q}$ with a mismatched cap size. To formalize this, we fix $k$ and $j$ with $j \leq k$. If $l=\frac{k-j}2$, then the following holds true
\[
\left( \sum_{\kappa \in \mathcal{K}_{l}} \| P_\kappa Q^\pm_j f \|_{L^p_t L^q_x}^2 \right)^\frac12 \ls 2^{-jb} \| Q^\pm_j f\|_{X^{\pm, b}_{k,p,q}}. 
\]
This is easily seen to hold true if $l$ is replaced by $l'$ with $|l-l'| \ls 1$. Assume now that $l' \leq l$. In that case any cap $k' \in \mathcal{K}_{l'}$ contains $\approx 2^{l-l'}$ caps $k \in \mathcal{K}_{l}$, thus we obtain
\begin{equation} \label{Kll}
\| P_{\kappa'} Q^\pm_j f \|_{L^p_t L^q_x} \ls 2^{\frac{l-l'}2} \left( \sum_{\kappa \in \mathcal{K}_{l}: \kappa \cap \kappa' \ne \emptyset} \| P_\kappa Q^\pm_j f \|_{L^p_t L^q_x}^2 \right)^\frac12.
\end{equation}
Assume now that $l \leq l'$. In this case we use the trivial estimate 
$\| P_{\kappa'}  P_\kappa Q^\pm_j f \|_{L^p_t L^q_x} \ls \| P_\kappa Q^\pm_j f \|_{L^p_t L^q_x}$ (which follows from the uniform boundedness of the kernel of $P_{\kappa'}$ in $L^1_x$) and the fact that each cap $\kappa \in \mathcal{K}_{l}$
contains $\approx 2^{l'-l}$ caps $\kappa' \in \mathcal{K}_{l'}$ to obtain
\begin{equation} \label{Kll2}
\left( \sum_{\kappa' \in \mathcal{K}_{l'}: \kappa \cap \kappa' \ne \emptyset} \| P_{\kappa'} P_{\kappa} Q^\pm_j f \|_{L^p_t L^q_x}^2 \right)^\frac12 \ls 2^{\frac{l'-l}2}  \| P_\kappa Q^\pm_j f \|_{L^p_t L^q_x}.
\end{equation}
Thus we obtain the following estimate
\begin{equation} \label{Kllg}
\left( \sum_{\kappa' \in \mathcal{K}_{l'}} \| P_{\kappa'} Q^\pm_j f \|_{L^p_t L^q_x}^2 \right)^\frac12 \ls 2^{\frac{|l-l'|}2} \left( \sum_{\kappa \in \mathcal{K}_{l}} \| P_\kappa Q^\pm_j f \|_{L^p_t L^q_x}^2 \right)^\frac12 \ls 2^{\frac{|l-l'|}2} 2^{-jb} \| Q^\pm_j f\|_{X^{\pm, b}_{k,p,q}}, 
\end{equation}
which holds true regardless of the relation between $l$ and $l'$. We also record a particular case of this corresponding to $l'=0$:
\begin{equation} \label{esayX}
\| Q^\pm_j f \|_{L^p_t L^q_x} \ls 2^{\frac{k-j}4} 2^{-jb} \| Q^\pm_j f\|_{X^{\pm, b}_{k,p,q}}. 
\end{equation}

We now return to the construction of our resolution space, which is essentially based on information at the level of $S_k^\pm$ and $X^{\pm, \frac12, 1}_{k,\frac43,4}$.
There is one important observation: just as the structures that are found in $S_k$ come with various levels of losses in regularity, see the factors of $2^{-\frac{k}4}$ and $2^{-\frac{3k}4}$ that are used to compensate for that loss, a similar loss is encountered in the structure  $X^{\pm, \frac12, 1}_{k,\frac43,4}$. While the losses mentioned in the Strichartz estimates are clearly seen at the level of linear estimates, see Proposition \ref{lem:full-str}, the loss recorded in the structure $X^{\pm, \frac12, 1}_{k,\frac43,4}$ is a byproduct of the nonlinear analysis below; for now we simply denote the loss by $r_0$, and find its value once the nonlinear analysis is completed. Thus we define
\begin{equation} \label{Zdef}
\| f\|_{Z_k^\pm} =  \| f \|_{S_k^\pm} + 2^{-\rl k} \|f\|_{X^{\pm, \frac12, 1}_{k,\frac43,4}}. 
\end{equation}
Next, we consider boundedness properties of certain multipliers in the context of our spaces.

\begin{lem}
  \label{lem:stable} {\it i)} Let $s_1,s_2\in \{+,-\}$.  For any
  $k_1,k_2\in \N_0$, $1\leq l\leq \min(k_1,k_2)+10$,
  $\kappa_1,\kappa_2\in \mathcal{K}_l$ with
  $\dist(s_1\kappa_1,s_2\kappa_2)\ls 2^{-l}$, $v_1,v_2\in \C^2$, we
  have
  \begin{equation}\label{eq:stable1}
    |\la \Pi_{s_1}(2^{k_1}\omega(\kappa_1))v_1,\beta\Pi_{s_2}(2^{k_2}\omega(\kappa_2))v_2\ra |\ls 2^{-l}|v_1||v_2|.
  \end{equation}

  {\it ii)} Fix $k \in \N_0$ and assume that $f$ is localized at frequency $2^k$, i.e. it satisfies $f =\tilde{P}_{k} f$. For any $1\leq l\leq k+10$, $\kappa \in
  \mathcal{K}_l$, $f \in S^{\pm}_{k}$, we have
  \begin{equation}\label{eq:stable2}
    \|[\Pi_{\pm}(D)-\Pi_{\pm}(2^{k}\omega(\kappa))]P_{\kappa} f \|_{S^{\pm}_{k}}\ls 2^{-l}\|P_{\kappa} f \|_{S^{\pm}_{k}},
  \end{equation}
and similarly in any $L^p_tL^q_x$-norms (not necessarily Schr\"odinger-admissible).

\end{lem}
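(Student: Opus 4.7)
For part (i), my plan is to apply the commutator identity \eqref{eq:com} (with $M=1$) to move $\beta$ past $\Pi_{s_2}(\xi_2)$: since $\beta\Pi_{s_2}(\xi_2) = \Pi_{-s_2}(\xi_2)\beta + s_2\la\xi_2\ra^{-1}\beta$ and $\Pi_\pm(\xi)$ is self-adjoint, one obtains
\[
\la \Pi_{s_1}(\xi_1) v_1, \beta \Pi_{s_2}(\xi_2) v_2 \ra = \la \Pi_{-s_2}(\xi_2) \Pi_{s_1}(\xi_1) v_1, \beta v_2 \ra + O(\la\xi_2\ra^{-1}|v_1||v_2|).
\]
I would then invoke Lemma \ref{lem:PiPi} on the composition $\Pi_{-s_2}(\xi_2)\Pi_{s_1}(\xi_1)$: when $s_1=s_2$ the two factors carry opposite signs and the composition is $O(\angle(\xi_1,\xi_2))+O(\la\xi_i\ra^{-1})$; when $s_1=-s_2$ the factors carry the same sign and it is $O(\angle(-\xi_2,\xi_1))+O(\la\xi_i\ra^{-1})$. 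In each case the relevant angle coincides with $\angle(s_1\xi_1,s_2\xi_2)$, which by $\xi_i=2^{k_i}\omega(\kappa_i)$ and the distance hypothesis is $\ls 2^{-l}$. The error terms $\la\xi_i\ra^{-1}\ls 2^{-\min(k_1,k_2)}$ are also $\ls 2^{-l}$ via the range constraint $l\leq \min(k_1,k_2)+10$.

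For part (ii), I set $m(\xi)=\Pi_\pm(\xi)-\Pi_\pm(2^k\omega(\kappa))$ and view it as a symbol localized to the Fourier support of $\tilde P_k\tilde P_\kappa$. The first step is the pointwise bound $|m(\xi)|\ls 2^{-l}$ on that support. I would split the difference via the intermediate point $|\xi|\omega(\kappa)$: the angular piece $\Pi_\pm(|\xi|\theta)-\Pi_\pm(|\xi|\omega(\kappa))$ reduces to $\pm\frac{|\xi|}{2\sqrt{1+|\xi|^2}}(\theta-\omega(\kappa))\cdot\alpha$ and hence is $\ls |\theta-\omega(\kappa)|\ls 2^{-l}$, while the radial piece $\Pi_\pm(|\xi|\omega(\kappa))-\Pi_\pm(2^k\omega(\kappa))$ is $\ls 2^{-k}$ by a direct calculation on the scalars $r\mapsto r/\sqrt{1+r^2}$ and $r\mapsto 1/\sqrt{1+r^2}$, and this is $\ls 2^{-l}$ since $l\leq k+10$.

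The next step is to promote this pointwise bound to the kernel bound $\|K_m\|_{L^1_x(\R^2)}\ls 2^{-l}$. I would rotate and dyadically rescale so that the support becomes a $1\times 2^{-l}$ rectangle, on which the rescaled symbol $\tilde m$ has size $2^{-l}$ with tangential derivatives of order $1$ and transverse derivatives of order $2^l$; standard integration-by-parts then yields the desired $L^1$ bound. By Young's inequality, $m(D)$ is bounded on every $L^q_x$ with operator norm $\ls 2^{-l}$, uniformly in $q\in[1,\infty]$, which simultaneously controls the $L^\infty_tL^2_x$ and all $L^p_tL^q_x$ ingredients in $S^\pm_k$ (as well as general, possibly non-admissible, $L^p_tL^q_x$ norms). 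Since $m(D)$ is a purely spatial multiplier it commutes with $Q^\pm_j$, so the $\dot X^{\pm,\frac12,\infty}$ and $Q^\pm_{\leq j}$-localized Strichartz pieces are handled at once. For the cap-localized $[k;l']$ norms, the Fourier support of $m(D) P_\kappa f$ lies in a small enlargement of $\kappa$, so only $O(1+2^{|l-l'|})$ caps in $\mathcal{K}_{l'}$ can contribute; combining this mismatch factor through the same accounting as in \eqref{Kll}--\eqref{Kll2} with the $L^q_x$ operator bound recovers the cap-$\ell^2$-sum with the $2^{-l}$ gain.

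The main obstacle is the kernel $L^1_x$ estimate, since we are working outside the $L^2_{t,x}$ regime where Plancherel alone would close the argument; once the rectangular reduction above is in place, however, this is a routine Schwartz-tail computation.
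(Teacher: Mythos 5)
Your part (i) is correct and is exactly the standard argument (the paper itself gives no proof here but defers to \cite{BH-DKG}, where the same computation is used): commute $\beta$ through $\Pi_{s_2}(\xi_2)$ via \eqref{eq:com}, use self-adjointness of $\Pi_{\pm}(\xi)$, apply Lemma \ref{lem:PiPi} to $\Pi_{-s_2}(\xi_2)\Pi_{s_1}(\xi_1)$, identify the resulting angle with $\angle(s_1\xi_1,s_2\xi_2)\ls 2^{-l}$, and absorb the $\la \xi_i\ra^{-1}$ errors using $l\le \min(k_1,k_2)+10$. Your plan for part (ii) --- pointwise bound $|m(\xi)|\ls 2^{-l}$ from the angular/radial splitting, then an $L^1_x$ kernel bound for the symbol cut off to the $2^k\times 2^{k-l}$ support, then Young's inequality together with commutation with the $Q^\pm_j$ --- is also the intended route.

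Two points in part (ii) need repair as written. First, the derivative bounds you assert for the rescaled symbol (``tangential derivatives of order $1$, transverse derivatives of order $2^l$'') are too weak: inserted into the standard two-integrations-by-parts-per-variable scheme on a $1\times 2^{-l}$ rectangle they only give $\|K_m\|_{L^1}\ls 1$, not $\ls 2^{-l}$, so the key gain is lost at exactly the step you call routine. What makes the estimate work, and what you should record, is that the radial derivative of $\Pi_\pm(\xi)$ is $O(\la\xi\ra^{-2})$ while the angular one is $O(\la\xi\ra^{-1})$, and on the support the direction $\omega(\kappa)$ deviates from the radial direction by an angle $O(2^{-l})$; hence after rescaling the tangential derivatives of the full symbol (cutoff contributions being multiplied by the $2^{-l}$-small factor $m$) are $O(2^{-l})$, and the $2^{-l}$-scaled transverse derivatives are $O(2^{-l})$ as well, which is precisely what yields $\|K_m\|_{L^1_x}\ls 2^{-l}$. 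Second, the cap-localized pieces of $S^\pm_k$ need no mismatch counting: the operator $[\Pi_\pm(D)-\Pi_\pm(2^k\omega(\kappa))]\tilde P_\kappa \tilde P_k$ is a purely spatial Fourier multiplier, so it commutes with $P_{\kappa'}$ and $Q^\pm_{\le j}$, and one simply applies the $L^1_x$ kernel bound inside each $\|P_{\kappa'}Q^\pm_{\le j}(\cdot)\|_{L^p_tL^q_x}$ and square-sums over $\kappa'\in\mathcal{K}_{l'}$. Importing a $2^{|l-l'|/2}$ factor as in \eqref{Kll}--\eqref{Kll2}, as you propose, would actually degrade the bound (and for $l'\gg l$ overwhelm the $2^{-l}$ gain), so that detour should be dropped rather than ``combined in''.
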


This is essentially Lemma 3.3 from \cite{BH-DKG} adapted to our two dimensional setting and the argument is identical.

\subsection{The linear theory}

The next Lemma provides the standard estimates for the inhomogeneous linear equation, and it does so using a dual type formulation. 
\begin{lem}\label{lem:lin}
i) For any $k \in \N_0$, $u_0=\tilde{P}_ku_0\in L^2(\R^2;\C^d)$ and $f=\tilde{P}_kf\in L^1_t(\R,L^2(\R^2;\C^d))$, let
\[
u(t)=e^{\mp it \la D\ra}u_0 +i \int_0^t e^{\mp i(t-s) \la D\ra}f(s)ds.
\]
Then, $u=\tilde{P}_k u$ is the unique solution of
\[
-i\partial_t u \pm \la D \ra u=f,
\]
and $u\in C(\R,L^2(\R^2;\C^d))$, and
\begin{equation}\label{eq:lin}
\|u\|_{S^\pm_k}\ls{} \|u_0\|_{L^2(\R^2)}+\sup_{g \in G}\Big|\int_{\R^{1+2}}\la f, g\ra_{\C^d} dxdt \Big|
\end{equation}
provided that the right hand side of \eqref{eq:lin} is finite, where $G$ is defined as
the set of all $g=\tilde{P}_k g\in L^\infty_t (\R;L^2(\R^2;\C^d))$ such that $\|g\|_{S^{\pm}_k}=1$.

ii) With the above setup we also have
\begin{equation}\label{eq:lin2}
\|u\|_{X^{\pm, \frac12, 1}_{k,\frac43,4}}\ls{} \sup_{j \succeq -k} \sup_{g \in G_j}\Big|\int_{\R^{1+2}}\la f, g\ra_{\C^d} dxdt \Big|
\end{equation}
provided that the right hand side of \eqref{eq:lin2} is finite, where $G_j$ is defined as
the set of all $g=\tilde{P}_k \tilde Q_j^\pm g\in L^\infty_t (\R;L^2(\R^2;\C^d))$ such that $\|g\|_{X^{\pm, \frac12, \infty}_{k,4,\frac43}}=1$.

\end{lem}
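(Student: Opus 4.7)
The existence, uniqueness, and $C(\R; L^2)$ continuity are classical, with $u$ given by the Duhamel formula
\[
u(t) = e^{\mp it\langle D\rangle} u_0 + i\int_0^t e^{\mp i(t-s)\langle D\rangle} f(s)\, ds.
\]
I will split $u = u_{\mathrm{free}} + u_{\mathrm{Duh}}$ and estimate each piece separately, with both inheriting the frequency localization $\tilde P_k$.

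The free part is treated directly: $\|u_{\mathrm{free}}\|_{S_k^\pm} \ls \|u_0\|_{L^2}$ follows component-wise from $L^2$ conservation (for the $L^\infty_t L^2_x$ piece); from the fact that the space-time Fourier support of $u_{\mathrm{free}}$ lies on $\tau = \mp \langle\xi\rangle$, so $Q_j^\pm u_{\mathrm{free}}$ vanishes outside the lowest discrete modulation scale (for the $\dot X^{\pm,\frac12,\infty}$ piece); and from Proposition \ref{lem:full-str}(ii) together with the observation that $Q_{\leq j}^\pm u_{\mathrm{free}}$ is itself a free solution (with Fourier-truncated data), so the cap-localized Strichartz estimates apply uniformly in $j$ and $l$. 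This absorbs the first term on the right of \eqref{eq:lin}.

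For the Duhamel term in (i) I will use a $U^2/V^2$-style duality argument, parallel to the one in \cite{BH-DKG}. Observe that $\|\cdot\|_{S_k^\pm}$ admits a dual characterization $\|v\|_{S_k^\pm} \approx \sup_h |\int\langle v, h\rangle|$, with $h$ ranging over the unit ball of the natural predual (that is, $L^1_t L^2_x$ for the $L^\infty_t L^2_x$ piece, the dual Strichartz spaces for the Strichartz pieces, and $\dot X^{\pm,-\frac12,1}$ for the modulation piece, assembled in an atomic fashion). For each such $h$ I define the backward adjoint evolution
\[
g(s) = \int_s^\infty e^{\pm i(t-s)\langle D\rangle} h(t)\, dt,
\]
solving $(-i\partial_s\pm\langle D\rangle) g = ih$ with $g(\infty) = 0$. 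By Minkowski in the time integral and the free-part estimate applied slice-by-slice, $\|g\|_{S_k^\pm} \ls \|h\|_{(S_k^\pm)^*} \ls 1$, so after normalization $g \in G$. Fubini then yields
\[
\int_{\R^{1+2}} \langle u_{\mathrm{Duh}}, h\rangle\, dxdt = i\int_{\R^{1+2}} \langle f, g\rangle\, dxdt,
\]
and taking the supremum over $h$ produces \eqref{eq:lin}. For (ii) the argument is parallel but rests on the cleaner, genuine Banach-space duality between $X^{\pm, \frac12, 1}_{k,\frac43,4}$ and $X^{\pm, -\frac12, \infty}_{k, 4, \frac43}$ under the natural pairings ($\ell^1 \leftrightarrow \ell^\infty$ in the modulation index $j$, $L^{4/3}_tL^4_x \leftrightarrow L^4_t L^{4/3}_x$, and cap $\ell^2$ self-duality). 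Decomposing by modulation, for each $j \succeq -k$ I test against $h = \tilde Q_j^\pm h$ of unit dual norm and build the same backward evolution, now automatically localized at modulation $\sim 2^j$ and at caps of scale $2^{-(k-j)/2}$, giving $g \in G_j$; the Duhamel identity then delivers \eqref{eq:lin2}, with the $\sup_j$ coming from the $\ell^\infty_j$ side of the dual.

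The main obstacle is verifying $\|g\|_{S_k^\pm} \ls \|h\|_{(S_k^\pm)^*}$ in part (i) and its analogue $\|g\|_{X^{\pm,\frac12,\infty}_{k,4,\frac43}} \ls 1$ in part (ii) for the cap-localized Strichartz and modulation-localized pieces, where $g$ is only implicitly a ``backward free evolution with forcing'' rather than a pure free solution. For (i) this is exactly the content of the $U^2, V^2$ transfer principle carried over from \cite{BH-DKG}. For (ii) the construction must respect the rigid relation $2l = k-j$ built into $X^{\pm,\frac12,\infty}_{k,4,\frac43}$; this relies crucially on the uniform $L^1_{t,x}$ kernel bounds for $Q_j^\pm P_\kappa P_k$ supplied by Lemma \ref{kerb}, which also underlie the very well-definedness of these non-$L^2_{t,x}$-based $X$-structures and the transport of $L^4_t L^{4/3}_x$ bounds through the modulation projection without loss.
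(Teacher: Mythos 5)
Your treatment of the free part and the overall Duhamel splitting is fine, and your part (ii) is in substance the paper's argument (after the $L^{4/3}_tL^4_x$--$L^4_tL^{4/3}_x$, cap-$\ell^2$ and $\ell^1_j$--$\ell^\infty_j$ dualities, the estimate reduces to dividing by the symbol $\tau\pm\la\xi\ra\approx 2^j$ on each modulation piece, which is exactly where the kernel bounds of Lemma \ref{kerb}, with the rigid relation $2l=k-j$, enter). The genuine gap is in part (i). Your reduction rests on two claims you do not establish: first, a predual characterization $\|v\|_{S^\pm_k}\approx\sup_h|\int\la v,h\ra\,dxdt|$ over the unit ball of a ``natural predual'' of $S^\pm_k$ --- not obvious for a norm built from suprema over $j$ and $l$ of cap square functions of $Q^\pm_{\leq j}v$, and never constructed; second, and decisively, the bound $\|g\|_{S^\pm_k}\ls\|h\|_{(S^\pm_k)^*}$ for the backward evolution $g(s)=\int_s^\infty e^{\pm i(t-s)\la D\ra}h(t)\,dt$ of an arbitrary predual element. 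Minkowski plus the free-solution estimate applied slice-by-slice only yields $\|g\|_{S^\pm_k}\ls\|h\|_{L^1_tL^2_x}$, and $\|h\|_{L^1_tL^2_x}$ is \emph{not} controlled by the dual norm (the inequality goes the other way, since $\|\cdot\|_{S^\pm_k}$ dominates $\|\cdot\|_{L^\infty_tL^2_x}$; predual atoms coming from the dual Strichartz components need not even lie in $L^1_tL^2_x$). Moreover, the estimate you need is precisely the retarded (adjoint) form of the inhomogeneous estimate \eqref{eq:lin} itself, so as written the argument is circular, and the $U^2,V^2$ ``transfer principle'' you invoke to close it does not supply it: the transfer principle converts free Strichartz estimates into bounds for $U^p$- and $V^2$-functions; it says nothing about the backward Duhamel evolution of a general element of a predual of $S^\pm_k$.

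The paper's route avoids this issue, and this is the essential difference. It bounds the Duhamel term in $U^\pm_k$ using the standard $U^2$--$V^2$ duality (the test functions range over the $V^\pm_k$ unit ball, not over a predual of $S^\pm_k$), and then uses the chain $\|\cdot\|_{S^\pm_k}\ls\|\cdot\|_{V^\pm_k}\ls\|\cdot\|_{U^\pm_k}$ on both sides of the pairing: on the solution side this requires the cap-localized Strichartz bounds for $V^2$-functions (transfer principle) together with the $V^2$-stability of the cutoffs $Q^\pm_{\leq j}$, and on the test-function side the inclusion of the $V^\pm_k$ ball in a fixed multiple of the $S^\pm_k$ ball shows that the supremum over $G$ in \eqref{eq:lin} dominates the $V^2$-dual expression. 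If you want to salvage your scheme, you must either prove the retarded estimate for your predual directly (which amounts to reproving the lemma) or insert the $U^2/V^2$ layer as the paper does; as it stands, part (i) of your proposal does not go through.
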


Part i) of the Lemma is essentially Lemma 3.4 from \cite{BH-DKG} adapted to our two dimensional setting and the argument is almost identical; the only add-on feature that we use here is the stability of the estimates under modulation cut-offs, but this again follows from the stability of the $V^2_{\pm \la D \ra}$ structure under modulation cut-offs. For convenience, we provide a sketch of the argument below. Part ii) is new and it is tailored around the use of the new structure  $X^{\pm, \frac12, 1}_{k,\frac43,4}$ introduced in this paper.

\begin{proof}
i) Following the argument in Lemma 3.4 from \cite{BH-DKG}, we argue using the theory of $U^2$ and $V^2$ spaces, see e.g.\ \cite{KT,HHK,KVT} for details.
We recall that for $1<p<\infty$ the atomic space $U^p_{\pm\la D\ra}$ is defined via its atoms
\[
a(t)=\sum_{k=1}^K \mathds{1}_{[t_{k-1},t_{k})}(t)e^{\mp i t \la D\ra} \phi_k , \quad \sum_{k=1}^K\|\phi_k\|_{L^2}^p=1,
\]
where $\{t_k\}$ is a partition, $t_K=+\infty$.

As a companion space we use the space $V^p_{\pm\la D\ra}$ of right-continuous functions $v$ such that $t\mapsto e^{\pm i t \la D\ra}v(t)$ is of bounded $p-$variation. We have $ U^2_{\pm\la D\ra}\hookrightarrow V^2_{\pm\la D\ra}\hookrightarrow U^p_{\pm\la D\ra}$ for $p>2$.

For $0\leq l \leq k$ we define
\begin{equation}\label{eq:loc-u2}
\|u\|_{U^\pm_{k;l}}:=\Big(\sum_{\kappa \in \mathcal{K}_l} \| P_{\kappa} u \|^2_{U^2_{\pm\la D\ra}}\Big)^{\frac12}.
\end{equation}
Then, we have
\begin{equation}\label{eq:loc-v2}
\|u\|_{V^\pm_{k;l}}:=\Big(\sum_{\kappa \in \mathcal{K}_l} \| P_{\kappa} u \|^2_{V^2_{\pm\la D\ra}}\Big)^{\frac12}\ls \|u\|_{U^\pm_{k;l}}.
\end{equation}
It is easy to show that the $U^\pm_{k;l}$-norms are decreasing if we localize to smaller scales, i.e.\
\[
\|u\|_{U^\pm_{k;l}}\ls \|u\|_{U^\pm_{k;\tilde{l}}} \text{ if } \tilde{l}\leq l,
\]
and the $V^\pm_{k;l}$-norms are increasing if we localize to smaller scales, i.e.\
\[
\|u\|_{V^\pm_{k;l}}\ls \|u\|_{V^\pm_{k;\tilde{l}}} \text{ if } \tilde{l}\geq l.
\]
Set $U^\pm_{k}=U^\pm_{k;k}$ and $V^\pm_{k}=V^\pm_{k;k}$.

Strichartz estimates for admissible pairs $(p,q)$ hold for $U^p_{\pm\la D\ra}$-functions (which is easily verified for atoms), hence all for $V^2_{\pm\la D\ra}$-functions. For any $0\leq l\leq k$ we have
\[
2^{-\frac{2k}{p}} \Big(\sum_{\kappa \in \mathcal{K}_l} \| P_{\kappa}  u \|^2_{L^p_tL^q_x}\Big)^{\frac12}
\ls{} \|u\|_{V^\pm_{k;l}}\ls \|u\|_{V^\pm_{k}}.
\]
Since $\|Q_{\leq j} u \|_{V_k^{\pm}} \ls \|u \|_{V_k^{\pm}}$ for all $j \in \Z$ (see e.g. \cite[Corollary ~2.18]{HHK}) the above inequality holds true if $u$ is replaced by $Q_{\leq j} u$.  We also have $V^\pm_{k}\hookrightarrow V^2_{\pm\la D\ra}$ and $V^2_{\pm\la D\ra}$-norm dominates both the $L^\infty_t L^2_x$-norm and the $\dot{X}^{\pm,\frac12,\infty}$-seminorm. Hence,
\[\|u\|_{S^\pm_k}\ls \|u\|_{V^\pm_{k}}\ls \|u\|_{U^\pm_{k}}.\]
Now, we can use the $U^2$ duality theory (see e.g. \cite[Prop.~2.10]{HHK}, and \cite[Prop.~2.11]{HTT} for a frequency-localized version), to conclude that
\begin{equation} \label{Ukest}
    \|u\|_{U^\pm_k}\ls{}\|u_0\|_{L^2(\R^2)}+\sup_{h \in H}\Big|\int_{\R^{1+2}}\la f, h\ra_{\C^d} dxdt \Big|,
\end{equation}
where $H$ is defined as the set of all $h=\tilde{P}_k h$ such that $\|h\|_{V^\pm_k}=1$.
The claim now follows by using again $\|g\|_{S^{\pm}_k}\ls \|g\|_{V^\pm_k}$.

\begin{rmk}\label{rmk:u2} From \eqref{Ukest}, it follows that  we can upgrade \eqref{eq:lin} to estimating  $\|u\|_{U^{\pm}_{k}}$ by the same quantity (instead of the current $\|u\|_{S^\pm_{k}}$).
\end{rmk}

ii) Fix $j \succeq -k$. A simple duality argument shows that 
\[
\| Q_j f\|_{X^{\pm, - \frac12}_{k,\frac43,4}} \ls \sup_{g \in G_j}\Big|\int_{\R^{1+2}}\la f, g\ra_{\C^d} dxdt \Big|.
\]
Next let $2l=k-j$ and $\kappa \in \mathcal{K}_l$; we apply the modulation and angular projectors to the equation to obtain
 \[
  (-i\partial_t \pm \langle D \rangle) Q_j P_\kappa u = Q_j P_\kappa f,
 \]
from which 
 \[
 Q_j P_\kappa u = \frac{Q_j P_\kappa f}{ -i\partial_t \pm \langle D \rangle}. 
 \]
From Lemma \ref{kerb} we obtain the desired bound for fixed $\kappa$, that is
\[
\| Q_j P_\kappa u \|_{L^\frac43_t L^4_x} \ls 2^{-j} \|Q_j P_\kappa f\|_{L^\frac43_t L^4_x}. 
\]
Then the $\ell^2$ structure with respect to caps is trivially inherited from that of $f$, thus leading to
\[
\| Q_j u\|_{X^{\pm, \frac12}_{k,\frac43,4}} \ls \| Q_j f\|_{X^{\pm, - \frac12}_{k,\frac43,4}} \ls \sup_{g \in G_j}\Big|\int_{\R^{1+3}}\la f, g\ra_{\C^d} dxdt \Big|.
\]
Finally, the estimate in $X^{\pm, \frac12,1}_{k,\frac43,4}$, which performs the summation with respect to $j$, is recovered from the duality between $l^1_j$ and $l^\infty_j$. This finishes the argument for \eqref{eq:lin2}.

\end{proof}

Our resolution space $S^{\pm,\sigma}$ corresponding the Sobolev regularity $\sigma$ will be the space of functions in $C(\R,H^\sigma(\R^2;\C^d))$ such that
\[
\| f \|_{S^{\pm,\sigma}} = \| P_{\leq 0} f \|_{S_{\leq 0}^\pm} +
\left( \sum_{k \geq 1} 2^{2\sigma k} \| P_k f \|_{S_k^\pm}^2
\right)^\frac12<+\infty,
\]
which is obviously a Banach space. Similarly, we define $Z^{\pm,\sigma}$ such that
\[
\| f \|_{Z^{\pm,\sigma}} = \| P_{\leq 0} f \|_{Z_{\leq 0}^\pm} +
\left( \sum_{k \geq 1} 2^{2\sigma k} \| P_k f \|_{Z_k^\pm}^2
\right)^\frac12<+\infty.
\]

\begin{proof}[Proof of Lemma \ref{kerb}] We establish the estimate for $\bar K^{\pm}_{k,j,\kappa}$ the kernel of the multiplier $\frac{P_\kappa Q_j^\pm P_k}{-i \partial_t \pm \la D \ra}$; the argument for $K^\pm_{k,j,\kappa}$ follows in a similar, but simpler, way. Also, without restricting the generality of the argument, it suffices to provide the argument for the $-$ choice in the multiplier; the argument in the $+$ case is identical.  The multiplier  $\frac{P_\kappa Q_j^- P_k}{-i \partial_t - \la D \ra}$ has symbol $\frac{\rho_j(\tau - \la \xi \ra) \eta_\kappa(\xi) \rho_k(\xi)}{\tau - \la \xi \ra} $. 
The relation between the scales of this localization, that is $2l=k-j$, has been chosen such that the support of $\rho_j(\tau - \la \xi \ra) \eta_\kappa(\xi) \rho_k(\xi)$ is well approximated by a rectangular parallelepiped and at the same time we have, the obvious, approximate constancy of the denominator, that is $\tau - \la \xi \ra \approx 2^{j}$.  This motivates changing the coordinates from $(\tau,\xi_1,\xi_2)$ to coordinates which are better aligned with the directions principal directions of the 
 above mentioned parallelepiped. This change is not necessary at high modulations that is when $j \geq k-10$; in this case a similar but simpler computation in the original coordinates works just fine. Thus bellow we assume $j \leq k-10$. 
 
 To keep notation simple, we can assume that $\omega(\kappa)=(1,0)$.  We define the new coordinates by $\tilde \tau=\frac1{\sqrt{2}} (\tau-\xi_1), \tilde \xi_1=\frac1{\sqrt{2}} (\tau+\xi_1), \tilde \xi_2=\xi_2$.  We know that $a$ is supported in a rectangular parallelepiped of size $C 2^{-j} \times C 2^k \times C 2^{k-l}$, where the sizes are in the directions $\tilde \tau, \tilde \xi_1, \xi_2$ respectively; for $z \in \{\tilde \tau, \tilde \xi_1, \xi_2\}$ we let $s(z)$ be the size of the parallelepiped in the $z$ direction.
Our main claim here is that $a$ satisfies the bounds
 \begin{equation} \label{abound}
 |\partial_z^\alpha a| \ls_\alpha 2^{-j} (s(z))^{-\alpha},
 \end{equation}
 for any $z \in \{\tilde \tau, \tilde \xi_1, \xi_2\}$ and $0 \leq \alpha \leq 4$. 
 This suffices to establish that $K=\check a \in L^1(\R^3)$ with uniform bounds,
thus proving the Lemma. 
 
In the remaining of this proof we provide a full argument for \eqref{abound}.

 Let $z=\tilde \tau$.  The strategy is simple. Since  $a= \frac{\rho_j(\tau - \la \xi \ra) \eta_\kappa(\xi) \rho_k(\xi)}{\tau - \la \xi \ra} $ is a product of symbols, it suffices to estimate each term. All the estimates below are made under the assumption that $(\tau,\xi_1,\xi_2)$ belong to the support of $a$. 

Using the formula  $ (\xi_1,\xi_2)= (\frac{1}{\sqrt{2}}(\tilde \xi_1 - \tilde \tau), \xi_2)$, a direct computation gives $| \partial^\alpha_{\tilde \tau}  |\xi| |\ls_\alpha 2^{k(1-\alpha)}$, from which it follows that
\begin{equation} \label{ab1}
| \partial^\alpha_{\tilde \tau}  \rho_k(|\xi|) |\ls_\alpha 2^{-k \alpha} \ls 2^{-j \alpha}, 
\end{equation}
where we have used the fact that $ j \leq k-10$. 
 
 Given our choice $\omega=(1,0)$, it follows that $\angle (\xi)$, the angle that $\xi$ makes with $\omega(\kappa)$ (the center of the cap), is given by
 \[
 \angle (\xi)= \arcsin \left(\frac{\xi_2}{\xi_1}\right)= \arcsin \left(\frac{\xi_2}{\frac{1}{\sqrt{2}}(\tilde \xi_1 - \tilde \tau),}\right);
 \]
as a consequence $\eta_\kappa(\xi)= \rho_{-l} (\angle (\xi))$. Next we estimate
$ |  \partial^\alpha_{\tilde \tau}  \arcsin(\frac{\xi_2}{\xi_1})  |\ls_\alpha |\frac{\xi_2}{\xi_1^{1+\alpha}}| \ls 2^{-l} 2^{-\alpha k} $;
 in the later one we use the fact that $|\frac{\xi_2}{\xi_1}| \leq \frac14$. From this it follows that
 \begin{equation} \label{ab2}
 |  \partial^\alpha_{\tilde \tau}  \eta_\kappa(\xi)| \ls 2^{-\alpha k} \ls 2^{-j \alpha}. 
 \end{equation}
 
 Just as in the proof of \eqref{ab1}, we obtain $| \partial^\alpha_{\tilde \tau}  \la \xi \ra |\ls_\alpha 2^{k(1-\alpha)}$. Since $\partial_{\tilde \tau} \tau=\frac1{\sqrt{2}}$
 and $\partial^\alpha_{\tilde \tau} \tau = 0, \forall \alpha \geq 2$, it follows that $ | \partial^\alpha_{\tilde \tau}  (\tau-\la \xi \ra) | \ls_\alpha 2^{k(1-\alpha)} $. From this we obtain two estimates
 \begin{equation} \label{ab3}
  | \partial^\alpha_{\tilde \tau}  \rho_j (\tau-\la \xi \ra) | \ls_\alpha 2^{-j \alpha},
 \end{equation}
 and 
 \begin{equation} \label{ab4}
  | \partial^\alpha_{\tilde \tau}  (\tau-\la \xi \ra)^{-1} | \ls_\alpha 2^{-j (\alpha+1)};
 \end{equation}
 in the above we use the chain rule an the fact that $2^{-k} \ls 2^{-j}$. Using \eqref{ab1}, \eqref{ab2}, \eqref{ab3} and \eqref{ab4}, we obtain \eqref{abound} for 
 $z=\tilde \tau$. 
 
 The proof for $z=\tilde \xi_1$ is similar. Indeed, a quick inspection of the arguments above, establishes that
 \begin{equation} \label{ab12}
 | \partial^\alpha_{\tilde \xi_1}  \rho_k(|\xi|) |\ls_\alpha 2^{-k \alpha}, \quad |  \partial^\alpha_{\tilde \xi_1}  \eta_\kappa(\xi)| \ls 2^{-\alpha k}.
 \end{equation}
 We then write
 \[
 \begin{split}
 \tau - \sqrt{\xi_1^2+\xi^2_2+1}  &= \frac1{\sqrt{2}} (\tilde \tau + \tilde \xi_1) - \sqrt{\frac12(\tilde \xi_1 - \tilde \tau)^2+\xi^2_2+1} \\
  &=  \frac{\frac12 (\tilde \tau + \tilde \xi_1)^2 -  (\frac12(\tilde \xi_1 - \tilde \tau)^2+\xi^2_2+1)}{\frac1{\sqrt{2}} (\tilde \tau + \tilde \xi_1) + \sqrt{\frac12(\tilde \xi_1 - \tilde \tau)^2+\xi^2_2+1}}\\  
 &= \frac{2 \tilde \tau \cdot \tilde \xi_1 -  (\xi^2_2+1)}{\frac1{\sqrt{2}} (\tilde \tau + \tilde \xi_1) + \sqrt{\frac12(\tilde \xi_1 - \tilde \tau)^2+\xi^2_2+1}} \\
 &= \frac{2 \tilde \tau \cdot \tilde \xi_1 -  (\xi^2_2+1)}{\frac1{\sqrt{2}} (\tau + \la \xi \ra) }. 
 \end{split}
 \]
 We note that the denominator is in fact (in the original coordinates) $ \tau + \sqrt{\xi_1^2+\xi^2_2+1} =  \tau - \sqrt{\xi_1^2+\xi^2_2+1}  + 2 \sqrt{\xi_1^2+\xi^2_2+1}
 \approx 2^k$ given that $ |\tau - \sqrt{\xi_1^2+\xi^2_2+1}| \leq 2^{j+1} \leq 2^{k-9} < 2^{-5} \sqrt{\xi_1^2+\xi^2_2+1}$ within the support of $\rho_j(\tau - \la \xi \ra )\rho_k(\xi)$. The above computation provides  the formula for $\tau - \la \xi \ra$ in the new coordinates.

 Using the formula $\tau+\la \xi \ra=\frac1{\sqrt{2}}(\tilde \tau + \tilde \xi_1) + \sqrt{\frac12(\tilde \xi_1 - \tilde \tau)^2+\xi^2_2+1}$ and the observation that $\la \xi \ra \approx 2^k, |\tilde \tau| \ls 2^k, |\tilde \xi_1| \ls 2^k$ in the region where we estimate it is straightforward to establish that
 $| \partial_{\tilde \xi_1}^\alpha (\tau + \la \xi \ra) | \ls_\alpha  2^{k(1-\alpha)}$. Based on this, the above formula, and the fact that $j \leq k-10$ we obtain
 \[
 | \partial^\alpha_{\tilde \xi_1} (\tau - \la \xi \ra) | \ls 2^{j-\alpha k}.
 \]
 From this, using the chain and product rule, we obtain
 \begin{equation} \label{ab13}
 | \partial^\alpha_{\tilde \xi_1} \rho_j (\tau - \la \xi \ra) | \ls 2^{-\alpha k}, \quad  | \partial^\alpha_{\tilde \xi_1}  (\tau - \la \xi \ra)^{-1} | \ls 2^{-j-\alpha k}.
 \end{equation}
 From \eqref{ab12}, \eqref{ab13} we obtain \eqref{abound} for $z=\tilde \xi_1$. 
  
 Finally we prove \eqref{abound} for $z=\xi_2$.  Just as before $\partial^\alpha_{\xi_2} |\xi | \ls  2^{-k(1-\alpha)}$;
 from this we obtain
 \begin{equation} \label{ab1xi2}
  | \partial^\alpha_{\tilde \xi_2}  \rho_k(|\xi|) |\ls_\alpha 2^{-k\alpha} \ls 2^{-(k-l) \alpha},
 \end{equation}
 Next we estimate $ |  \partial^\alpha_{\xi_2}  \arcsin(\frac{\xi_2}{\xi_1})  |\ls_\alpha  2^{-\alpha k}, \forall \alpha \geq 1$. From this it follows that
   \begin{equation} \label{ab2xi2}
|  \partial^\alpha_{ \xi_2}  \eta_\kappa(\xi)| \ls 2^{-\alpha (k-l)}.
 \end{equation}
 We notice that $\partial^\alpha_{\xi_2} \la \xi \ra \ls  2^{-k(1-\alpha)}, \alpha \geq 0$, but this is not be enough to derive the estimates involving $\rho_j(\tau - \la \xi \ra)$ and $ (\tau - \la \xi \ra)^{-1}$. Instead we notice that $\partial_{\xi_2} \la \xi \ra \ls  2^{-l}$ and $\partial_{\xi_2}^3 \la \xi \ra \ls  2^{-l-2k}$. From these estimates and the relation $2l=k-j$ we obtain
 \begin{equation} \label{ab7xi2}
  | \partial^\alpha_{\tilde \xi_2}  (\tau - \la \xi \ra)^{-1} |\ls_\alpha 2^{-j} 2^{-(k-l)\alpha}, \quad 
  | \partial^\alpha_{\tilde \xi_2}  \rho_j(\tau - \la \xi \ra) |\ls_\alpha 2^{-(k-l)\alpha},
 \end{equation}
for $\alpha \leq 4$. This finishes the argument for \eqref{abound}. We note that the most efficient way to carry the computation above is using the Fa\`{a} di Bruno's formula: 
\[
\partial_x^n f(g(x)) =  \sum \frac{n ! }{m_1 ! (1 !)^{m_1} m_2 ! (2 !)^{m_2} ... m_n ! (n !)^{m_n} } f^{(m_1+...+m_n)} \Pi_{j=1}^n (g^{(j)})^{m_j},
\]
 where the sum is performed over the $n$-tuples of non-negative integers $(m_1,..,m_n)$ subject to $1 \cdot m_1 + 2 \cdot m_2 + ... + n \cdot m_n=n$.

\end{proof}

\section{Nonlinear estimates and the proof of the main result}\label{sect:nl}

In this section we provide the estimates for the nonlinear terms in \eqref{DKGf}, which is our reformulation of the DKG system. These estimates are guided by Lemma \ref{lem:lin}, where $f$ should be thought as the nonlinear term which we want to estimate. There will be two sets of nonlinear estimates: one that recovers the structure $S_k^\pm$ (for the solutions of \eqref{DKGf}) and is based on \eqref{eq:lin}, and one that recovers the structure $X_{k,\frac43,4}^{\pm,\frac12,1}$ and is based on \eqref{eq:lin2}.

Recall \eqref{DKGf} with the convention $M=m=1$ and use the decomposition $\psi=\Pi_+(D)\psi+\Pi_-(D)\psi$ in the nonlinearities that appear in \eqref{DKGf} (for all three terms). 

The first set of estimates is the following
\begin{align*}
  & \Big|\int \la \Pi_{s_2}(D)[\Re\phi \, \beta \Pi_{s_1}(D)\psi_1],\psi_2\ra dxdt\Big| 
  \ls  \|\phi\|_{Z^{+,r+\frac12}}\|\psi_1\|_{Z^{s_1,r}}\|\psi_2\|_{S^{s_2,-r} }\\
 & \Big|\int \la D \ra^{-1}\la \Pi_{s_1}(D)\psi_1, \beta \Pi_{s_2}(D)\psi_2\ra \, \overline{\phi} dxdt\Big| 
 \ls   \|\phi\|_{S^{+,-r-\frac12} }\|\psi_1\|_{Z^{s_1,r}}\|\psi_2\|_{Z^{s_2, r}},
\end{align*}
for any choice of signs $s_1,s_2\in \{+,-\}$. The first inequality estimates the nonlinearity that appears in the equations of $\psi_\pm$, while the second estimates the nonlinearity that appears in the equation of $\phi$, see \eqref{DKGf}. These estimates follow from the more compact estimate
\begin{equation}\label{eq:red-est}
  \begin{split}
    &\Big|\int \phi \, \la \Pi_{s_1}(D)\psi_1,\beta\Pi_{s_2}(D)\psi_2\ra dxdt\Big|\\
    \ls{} &
\min \{   \|\phi\|_{Z^{+,r+ \frac12}} \|\psi_1\|_{Z^{s_1,r}} \|\psi_2\|_{S^{s_2,-r}} ,   \|\phi\|_{S^{+,-r+\frac12} }\|\psi_1\|_{Z^{s_1,r}}\|\psi_2\|_{Z^{s_2, r}} \}
  \end{split}
\end{equation}
 More precisely, we will prove this  first on the dyadic level, where all integrals are clearly finite, cp.\ Lemma \ref{lem:lin}.

The second set of estimates is the following
\begin{align*}
  & \Big|\int \la \Pi_{s_2}(D)[\Re\phi \, \beta \Pi_{s_1}(D)\psi_1],\psi_2\ra dxdt\Big| 
  \ls  \|\phi\|_{S^{+,r+\frac12}}\|\psi_1\|_{S^{s_1,r}}\|\psi_2\|_{ X^{s_2, -r +\rl ,\frac12, \infty}_{4,\frac43}}\\
 & \Big|\int \la D \ra^{-1}\la \Pi_{s_1}(D)\psi_1, \beta \Pi_{s_2}(D)\psi_2\ra \, \overline{\phi} dxdt\Big| 
 \ls   \|\phi\|_{ X^{+, -r -\frac12 +\rl ,\frac12, \infty}_{4,\frac43}}\|\psi_1\|_{S^{s_1,r}}\|\psi_2\|_{S^{s_2, r}},
\end{align*}
for any choice of signs $s_1,s_2\in \{+,-\}$. These estimates follow from
 \begin{equation}\label{eq:red-est-2}
  \begin{split}
    &\Big|\int \phi \, \la \Pi_{s_1}(D)\psi_1,\beta\Pi_{s_2}(D)\psi_2\ra dxdt\Big|\\
    \ls{} &
\min \{   \|\phi\|_{S^{+,r+ \frac12}} \|\psi_1\|_{S^{s_1,r}}   \|\psi_2\|_{ X^{s_2, -r +\rl,\frac12, \infty}_{4,\frac43}}    ,    \|\phi\|_{ X^{+, -r +\frac12 +\rl ,\frac12, \infty}_{4,\frac43}}\|\psi_1\|_{S^{s_1,r}}\|\psi_2\|_{S^{s_2, r}} \},
  \end{split}
\end{equation}
for any choice of signs $s_1,s_2\in \{+,-\}$. Our main result in this section is the following:
\begin{pro} \label{mainpro}
If $r > \frac12$ and $r_0=1$, then \eqref{eq:red-est} and \eqref{eq:red-est-2} hold true.
\end{pro}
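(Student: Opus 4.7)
The plan is a dyadic frequency decomposition followed by case analysis. Setting $k, k_1, k_2$ for the frequencies of $\phi, \psi_1, \psi_2$ respectively, support considerations in the convolution force the standard trichotomy: $(HHL)$ with $k_1 \sim k_2 \succeq k$; $(HLH)$ with $k \sim k_2 \succeq k_1$; and the symmetric $(LHH)$. The final summation over $k$ in $\ell^2$ is absorbed by the slack coming from $r > \frac12$, which provides off-diagonal decay in the high-low interactions. Within each frequency configuration, I split by the sign choices $(s_1, s_2)$ according to Lemma \ref{lem:res}: the \emph{angular} regime ($s_1 = s_2$, or $s_1 = -, s_2 = +$ with $k \succeq \min(k_1, k_2)$), where \eqref{eq:mod-angle} bounds the resonance function by the square of the interaction angle, and the \emph{non-resonant} regime, where \eqref{eq:high-mod} forces the resonance to dominate the maximal frequency.

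In the angular regime I would further decompose $\psi_1, \psi_2$ into caps $\kappa_1, \kappa_2$ of angular scale $2^{-l}$, with $l$ chosen as large as Lemma \ref{lem:mod} parts ii)--iii) permit given the maximal modulation; faraway cap pairs contribute zero. For nearby pairs, Lemma \ref{lem:stable} supplies the angular gain $2^{-l}$ from the null symbol, and the cap-localized Strichartz estimates of Proposition \ref{lem:full-str} ii) place the two spinor factors into $L^{8/3}_t L^8_x[k_i; l]$ (or one of them in $L^\infty_t L^2_x$), paired with $\phi$ in its Strichartz norm via H\"older and Cauchy-Schwarz over caps. The angular gain and cap summation balance to produce the required dyadic bound with off-diagonal decay. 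In the non-resonant regime, Lemma \ref{lem:res} forces at least one modulation to dominate the maximal frequency; I place the corresponding factor in its $\dot X^{\pm, \frac12, \infty}$ component (or, in \eqref{eq:red-est-2}, in the new $X^{\pm, \frac12, \infty}_{k, 4, 4/3}$ component when the relevant slot sits there) and pair with the remaining two factors via Strichartz, using \eqref{Kllg} to reconcile any mismatched cap scales.

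The main obstacle, and the source of the new structure, is the $(HHL)$ non-resonant interaction in \eqref{eq:red-est-2} when the new $X$-space sits on one of the high-frequency factors. Here one must pair an $L^4_t L^{4/3}_x$-type factor (the one in the new space) against an $L^{4/3}_t L^4_x$-type product of two high-frequency Strichartz waves. The kernel bound of Lemma \ref{kerb} makes the dual $L^{4/3}_t L^4_x$ estimate quantitatively tight at the cap scale $2l = k - j$; Lemma \ref{lem:stable} then controls the null symbol on those caps; and tracking the worst-case configuration through the frequency-modulation parameters pins down the admissible loss at $r_0 = 1$. The first estimate \eqref{eq:red-est} is handled by the same dyadic machinery with the simpler task of closing in $S^{\pm}$ on the ``output'' slot via the $U^2/V^2$ duality supplied by Lemma \ref{lem:lin} i) (cf.\ Remark \ref{rmk:u2}). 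In all cases the threshold $r > \frac12$ is exactly what is needed to sum the resulting dyadic bounds, since the most delicate configurations produce logarithmic losses in $k$ only at the endpoint $r = \frac12$.
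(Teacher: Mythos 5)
There is a genuine gap, and it sits at the heart of the paper's argument. Your plan reserves the new $L^{4/3}_tL^4_x$--based structure for \eqref{eq:red-est-2} and asserts that \eqref{eq:red-est} is the ``simpler task'', closable with the classical ingredients (Strichartz components of $S^\pm_k$, the $\dot X^{\pm,\frac12,\infty}$ component, null-structure gain, $U^2/V^2$ duality). That is exactly the scheme the paper explains cannot close, and the dyadic bookkeeping shows why. Take the configuration $k_1\preceq k\sim k_2$ (the paper's Case~1) in the angular regime, with the \emph{low}-frequency factor $\psi_1$ carrying the dominant modulation $j_1$, where $j_1$ only ranges over $j_1\succeq -k_1$ (not $j_1\succeq k_2$). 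Estimating $\|Q^+_{<j_1}\phi\|_{L^4_{t,x}}\,\|Q^{s_1}_{j_1}\psi_1\|_{L^2_{t,x}}\,\|Q^{s_2}_{\leq j_1}\psi_2\|_{L^4_{t,x}}$ with the null gain $2^{-l}$ produces, after summing in $j_1$, a factor $2^{\frac{k}2}2^{\frac{k_2-k_1}2}$, i.e.\ a loss of half a derivative in the high frequency relative to the branch of \eqref{eq:red-est} in which $\psi_2$ is the ``output'' slot (the branch needed to close the Dirac equation); this loss cannot be absorbed by $r>\frac12$ or by the $(\min+1)^{-10}$ weight in \eqref{G}, since it grows exponentially in $\max(k,k_1,k_2)$. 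The paper removes it precisely by invoking the new component \emph{inside the proof of \eqref{eq:red-est}}: $Q^{s_1}_{j_1}\psi_1$ is placed in the cap-summed $L^{4/3}_tL^4_x$ norm (available through $\|\psi_1\|_{Z_{k_1}}$ at cost $2^{\rl k_1}$), while the two comparable high-frequency factors are put in cap-localized $L^8_tL^{8/3}_x$, each costing only $2^{k/4}$ instead of $2^{k/2}$; this converts the loss into $2^{(\rl-\frac12-r)k_1}$, harmless in the low frequency. The same mechanism is needed in Case~3 when $\phi$ (now the low frequency) carries the dominant modulation. Your plan never uses the $Z$-component of the input norms in \eqref{eq:red-est}, so it cannot reproduce this step.

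Two related misstatements feed this gap. First, your claim that in the non-resonant regime ``Lemma \ref{lem:res} forces at least one modulation to dominate the maximal frequency'' holds only for the sign pairs of Case~1 of Lemma \ref{lem:res}; in the angular regime (Case~2, which is where the problematic interaction lives) the lower bound \eqref{eq:gen-lb}--\eqref{eq:mod-angle} only ties the modulation to the angle and to the \emph{minimal} frequency, so the dominant modulation can sit far below $2^{\max(k,k_1,k_2)}$ while the null gain is insufficient to compensate. Second, the value $\rl=1$ is not pinned by the $(HHL)$ case of \eqref{eq:red-est-2} alone: the paper's two dyadic propositions impose complementary constraints, $\rl\le r+\frac12$ from the proof of \eqref{eq:red-est} (where the new norm is an input and its weight must be paid) and $\rl\ge 1$ from the proof of \eqref{eq:red-est-2} (where it is the output), and $\rl=1$ with $r>\frac12$ is the compatible choice. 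Since your scheme places no upper constraint on $\rl$ from \eqref{eq:red-est}, this balance --- and with it the actual reason the iteration closes --- is missing from the proposal.
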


Below we proceed with the proof of this Proposition which is essentially the analysis of the two estimates above \eqref{eq:red-est} and \eqref{eq:red-est-2}. This is split in two subsections, one for each estimate, due to the length of the arguments.

In both subsections our aim will be to rewrite \eqref{eq:red-est} and \eqref{eq:red-est-2} for functions localized in frequency. The following class of functions will play an important role in our analysis: $G: \N^3_0 \rightarrow (0,\infty)$ such that
\begin{equation} \label{G} \sum_{k,k_1,k_2 \in \N_0 \atop \max(k,k_1,k_2)\sim \med(k,k_1,k_2)}
  \frac{G(k,k_1,k_2)a_{k}
  b_{k_1} c_{k_2} }{(\min(k,k_1,k_2)+1)^{10}} \ls \| a \|_{l^2} \| b \|_{l^2} \| c \|_{l^2} 
\end{equation}
for all sequences $a=(a_j)_{j \in \N_0}$ etc.\ in $l^2(\N_0)$. We write $\mathbf{k}=(k,k_1,k_2)$.

\subsection{Estimates for dyadic pieces - part 1} 
Our main result in this section is the following.

\begin{pro} \label{PTR} Let $s_1,s_2\in\{+,-\}$. There exists a
  function $G: \N^3_0 \rightarrow (0,\infty)$ such that for all
  $\phi=P_{k}\phi,\psi_i=P_{k_i}\Pi_{s_i}(D)\psi_i$, $i=1,2$, the
  following estimate holds true:
  \begin{equation} \label{cunn2} 
  \begin{split}
  & \Big| \int \phi \la \psi_1, \beta
    \psi_2 \ra dx dt\Big| \\
    \ls{} & G(\mathbf{k}) 2^{\frac{k}2}
 \min ( 2^{ r (-k+k_1+k_2)} \| \phi \|_{S^{+}_{k}}  \| \psi_1 \|_{Z^{s_1}_{k_1}} \| \psi_2 \|_{Z^{s_2}_{k_2}},  
 2^{  r (k-k_1+k_2)}  \| \phi \|_{Z^{+}_{k}}  \| \psi_1 \|_{S^{s_1}_{k_1}} \| \psi_2 \|_{Z^{s_2}_{k_2}} ).
 \end{split}
  \end{equation}
Assuming that $r \geq \frac12$ and $r+\frac12 \geq \rl $, the function $G$ satisfies \eqref{G}. 
  
\end{pro}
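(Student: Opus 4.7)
The plan is a case analysis over the frequency triple $\mathbf{k} = (k,k_1,k_2)$ and the sign pair $(s_1,s_2)$, in the spirit of the three-dimensional argument in \cite{BH-DKG} but adapted to the enlarged resolution space $Z_k^\pm$ from \eqref{Zdef}. By Fourier support we may assume $\max \mathbf{k} \sim \med \mathbf{k}$, which produces three essentially different geometries: the balanced case $k \sim k_1 \sim k_2$, the high-high to low case $k \prec k_1 \sim k_2$, and the two symmetric high-low to high cases. Within each geometry I split according to the dichotomy of Lemma \ref{lem:res}.

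\textbf{Non-resonant regime.} In Case 1 of Lemma \ref{lem:res} we have $|\mu^{s_1,s_2}| \gtrsim 2^{\max\mathbf{k}}$, so by Lemma \ref{lem:mod}(i)--(ii, Case 1) the integral vanishes unless one of the three factors has modulation $\gtrsim 2^{\max\mathbf{k}}$. I will pin that factor (dictated by which term of the min in \eqref{cunn2} is being proved) at high modulation, place it in the $\dot X^{\pm,\frac12,\infty}$ or $X^{\pm,\frac12,1}_{k,\frac43,4}$ component of $Z_k^\pm$, and estimate the remaining two factors by cap-localized Strichartz norms from $S_k^\pm$ via H\"older. The high-modulation gain $2^{-\max\mathbf{k}/2}$ then compensates both the prefactor $2^{k/2}$ in \eqref{cunn2} and the regularity losses $2^{3k/4}$ and $2^{k/4}$ that come with the $L^{8/3}_t L^8_x$ and $L^8_t L^{8/3}_x$ estimates built into $S_k^\pm$.

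\textbf{Angular regime.} In Case 2 of Lemma \ref{lem:res} I decompose the two spinor inputs into caps $\kappa_1,\kappa_2 \in \mathcal{K}_l$. Lemma \ref{lem:stable} yields the null-structure gain $2^{-l}$, while Lemma \ref{lem:mod}(ii, Case 2) forces the maximum modulation to satisfy $j \succeq k_1 + k_2 - k - 2l$. I again pair two factors in cap-localized Strichartz norms and the third in $\dot X^{\pm,\frac12,\infty}$ or $X^{\pm,\frac12,1}_{k,\frac43,4}$, using \eqref{Kllg} to switch between the Strichartz cap scale and the cap scale of the $X^{\pm,\frac12,1}_{k,p,q}$ norm when the two are mismatched.

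The main obstacle I anticipate is the high-high to low case with $s_1=s_2$. There the $2^{-l}$ null-structure gain must simultaneously absorb Strichartz cap losses of order $2^{l/2}$ on each spinor factor and survive the modulation summation at the threshold $j \sim k_1 + k_2 - k - 2l$. This is precisely where the new space $X^{\pm,\frac12,1}_{k,\frac43,4}$ becomes essential: pairing a low-frequency Klein-Gordon factor in its own Strichartz $L^4_t L^{4/3}_x$ norm against an $L^{4/3}_t L^4_x$ high-modulation spinor factor avoids the logarithmic-in-frequency divergence that would appear if one relied exclusively on $L^2_{t,x}$ high-modulation pairings, at the price of the $2^{\rl k}$ loss that is absorbed by the hypothesis $r+\frac12 \geq \rl$. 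The hypothesis $r > \frac12$ is used to close the summation over the cap scale $l$ in this regime.

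Finally, a careful bookkeeping of the sums over modulations $j,j_1,j_2$, cap scale $l$ and caps $\kappa_i$ in each case yields \eqref{cunn2} with $G(\mathbf{k})$ of the form $2^{-\epsilon \max\mathbf{k}} \langle \min\mathbf{k}\rangle^N$ for some small $\epsilon > 0$; \eqref{G} then follows from Schur's test applied on the off-diagonal pieces where $\max\mathbf{k} \sim \med\mathbf{k}$.
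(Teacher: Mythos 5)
Your overall strategy coincides with the paper's (frequency trichotomy with $\max\sim\med$, modulation decomposition guided by Lemmas \ref{lem:res}--\ref{lem:mod}, null-structure gain from Lemma \ref{lem:stable}, cap-localized Strichartz norms from $S_k^\pm$, cap-rescaling via \eqref{Kllg}, and the new $X^{\pm,\frac12,1}_{k,\frac43,4}$ component of $Z_k^\pm$), but the sketch defers exactly the estimates that constitute the proof, and the one concrete claim you make about where the new space is indispensable is wrong. In the high-high to low case with $s_1=s_2$, Lemma \ref{lem:mod}(iii) forces the angular scale $l=k_1-k$ and the resulting gain $2^{-l}=2^{k-k_1}$ closes every modulation regime using only $S$-norms; no $X^{\pm,\frac12,1}_{k,\frac43,4}$ information is needed there. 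The configurations that actually force the new structure are different: (a) the high-low-high geometry ($k\sim k_2$, $k_1$ small, and its mirror) when the \emph{low-frequency spinor} carries the dominant modulation -- there the pure-$S$ pairing ($Q_{j_1}\psi_1$ in $L^2_{t,x}$ against two $L^4_{t,x}$ Strichartz factors) produces a loss $2^{\frac{k_2-k_1}2}$ which is tolerable only for the min-term carrying the weight $2^{r(k-k_1+k_2)}$, and for the other min-term one must instead put $Q_{j_1}\psi_1$ in $L^{\frac43}_tL^4_x$ (its $X^{s_1,\frac12,1}_{k_1,4,\frac43}$ norm) against two $L^8_tL^{\frac83}_x$ factors, trading $2^{k/2}$ of high-frequency loss for the $2^{\rl k_1}$ low-frequency loss; and (b) the high-high to low geometry in the non-resonant sign cases ($s_1=+,s_2=-$, or $s_1=-,s_2=+$ with $k\prec\min(k_1,k_2)$), where $\phi$ at frequency $2^k$ is forced to modulation $\succeq k_1$ and the min-term with $\|\phi\|_{Z^+_k}$ requires measuring $Q_j^+\phi$ in $L^{\frac43}_tL^4_x$. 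Since you neither identify these cases nor carry out the asymmetric bookkeeping showing that, configuration by configuration, one min-term tolerates the $2^{\frac{k_{\mathrm{high}}-k_{\mathrm{low}}}2}$ loss while the other needs the mixed $L^{\frac43}_tL^4_x$--$L^8_tL^{\frac83}_x$ pairing, the decisive part of the argument is missing.

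A secondary issue is your claimed output $G(\mathbf{k})\sim 2^{-\epsilon\max\mathbf{k}}\la\min\mathbf{k}\ra^N$. The argument does not produce decay in the maximal frequency: in the high-low-high case the bounds one obtains contain terms of the form $\la k_1\ra 2^{-rk_1}$ and $2^{(\rl-\frac12-r)k_1}$, which decay only in the minimal frequency (and are merely bounded when $\min\mathbf{k}=O(1)$). This is harmless for the Proposition -- \eqref{G} only requires Cauchy--Schwarz in the two comparable frequencies together with at most polynomial growth in the minimum, thanks to the factor $(\min\mathbf{k}+1)^{-10}$ -- but asserting max-frequency decay and invoking Schur's test on that basis misstates both what is needed and what the estimates deliver, and it is precisely in verifying which exponents survive under $r\geq\frac12$, $r+\frac12\geq\rl$ that the hypotheses of the Proposition enter.
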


It is clear that \eqref{eq:red-est}, with the choice $r_0=1$ and $r > \frac12$, is implied by the above result. 

\begin{proof} We denote the integral on the left hand side of
  \eqref{cunn2} by $I(\mathbf{k})$.
 We note that in most of the cases (appearing throughout the argument) we are able estimate $I(\mathbf{k})$ by using only the $S^{\pm}$ type norms, and this simplifies 
  the accounting in \eqref{cunn2}. 
    
  We decompose $ I(\mathbf{k})=I_0(\mathbf{k})+I_1(\mathbf{k})+I_2(\mathbf{k})$,
  where
  \begin{align*}
    I_0(\mathbf{k}):=&\sum_{j\in \Z} \int Q_{j}^+ \phi \, \la Q^{s_1}_{\leq j}\psi_1, \beta Q^{s_2}_{\leq j}\psi_2 \ra dx dt, \ \ 
    I_1(\mathbf{k}):=\sum_{j_1\in \Z}\int Q_{<j_1}^+ \phi  \,\la Q^{s_1}_{j_1}\psi_1, \beta Q^{s_2}_{\leq j_1}\psi_2 \ra dxdt, \\
    I_2(\mathbf{k}):=&\sum_{j_2\in \Z}\int Q_{<j_2}^+ \phi  \,\la
    Q^{s_1}_{<j_2}\psi_1, \beta Q^{s_2}_{j_2}\psi_2 \ra dx dt.
  \end{align*}
  Each of these terms will be bounded individually $ | I_i(\mathbf{k})| \ls G_i(\mathbf{k}) \min (...)$,  where the $\min(...)$ is the same expression on the right-hand side of \eqref{cunn2} and we recover the bound on $G$ from
  $G(\mathbf{k}) \leq \sum_{i=1}^3 G_i(\mathbf{k})$.
   We split the argument into three cases.

\medskip

  {\bf Case 1:} $|k-k_2 | \leq 10$.

  {\it Contribution of $I_0(\mathbf{k})$:} We split
  $I_0(\mathbf{k})=I_{01}(\mathbf{k})+I_{02}(\mathbf{k})$ according to
  $j \prec k_1$ and $j\succeq k_1$. Then, due to Lemma \ref{lem:mod} there is
  no contribution if $j \prec k_1$ in the case $s_1=+,s_2=-$.  With all other choices of signs, we can restrict the sum in $I_{01}$ to $j \succeq -k_1$, so that by Lemma \ref{lem:mod} with $2l=k_1+k_2-k-j\sim k_1-j$ we have
\begin{align*}
I_{01}(\mathbf{k})= \sum_{-k_1 \preceq j \prec k_1} \sum_{\kappa_1,\kappa_2 \in \mathcal{K}_l \atop
      \dist(s_1 \kappa_1,s_2\kappa_2)\ls 2^{-l}} 
      \Big\{\int  Q^+_{ j } \phi \cdot \la P_{\kappa_1}Q_{\leq j}^{s_1} \psi_1, \beta  P_{\kappa_2} Q_{\leq j}^{s_2}  \psi_2 \ra dx dt\Big\}.
  \end{align*}
In earnest one should use the notation $l(j)$ instead of just $l$ above, to indicate that $l$ depends on $j$, which is clear from the formula $2l=k_1+k_2-k-j\sim k_1-j$; to keep notation compact we keep this dependence implicit.  In view of Lemma \ref{lem:stable}, we decompose
\[
\Pi_{s_i}(D)P_{\kappa_i}=[\Pi_{s_i}(D)-\Pi_{s_i}(2^{k_i}\omega(\kappa_i ))]P_{\kappa_i}+\Pi_{s_i}(2^{k_i}\omega(\kappa_i ))P_{\kappa_i},
\]
and obtain
  \begin{align*}
     \|\la P_{\kappa_1}Q_{\leq j}^{s_1} \psi_1, \beta  P_{\kappa_2}  Q_{\leq j}^{s_2}  \psi_2 \ra\|_{L^{2}_{t,x}}
\ls{}  2^{-l} \|P_{\kappa_1}Q_{ \leq j}^{s_1} \psi_1\|_{L^4_{t,x}} \| P_{\kappa_2}  Q_{\leq j}^{s_2}  \psi_2\|_{L^4_{t,x}},
  \end{align*}
  for any $\kappa_1,\kappa_2 \in \mathcal{K}_l$ with $ \dist(s_1 \kappa_1,s_2\kappa_2)\ls 2^{-l}$.  This type of argument, providing the gain from the null structure, will be repeated several times throughout the paper, and we will shortcut the formalization by simply stating only the last inequality above.

  Then, we apply Cauchy-Schwarz and perform the cap summation and obtain
  \begin{align*}
   | I_{01}(\mathbf{k})| & \ls \sum_{-k_1\preceq j \prec k_1} \|  Q^+_{ j } \phi \|_{L^2}  \sum_{\kappa_1,\kappa_2 \in \mathcal{K}_l \atop
      \dist(s_1 \kappa_1,s_2\kappa_2)\ls 2^{-l}} 2^{-l} \|P_{\kappa_1}Q_{ \leq j}^{s_1} \psi_1\|_{L^4_{t,x}} \| P_{\kappa_2}  Q_{\leq j}^{s_2}  \psi_2\|_{L^4_{t,x}} \\
  & \ls \sum_{-k_1\preceq j \prec k_1} 2^{-\frac{j}2} \|  Q^+_{ j } \phi \|_{X^{+, \frac12}}  2^{-l} \left( \sum_{\kappa_1 \in \mathcal{K}_l} \|P_{\kappa_1}Q_{ \leq j}^{s_1} \psi_1\|_{L^4_{t,x}}^2 \right)^\frac12 \left( \sum_{\kappa_2 \in \mathcal{K}_l} \| P_{\kappa_2}  Q_{\leq j}^{s_2}  \psi_2\|^2_{L^4_{t,x}} \right)^\frac12 \\
      &  \ls\sum_{-k_1\preceq j \prec k_1} 2^{-\frac{j}2}
    \| \phi \|_{S^+_{k}} 2^{-\frac{k_1-j}2} 2^{\frac{k_1}2} 
     \| \psi_1 \|_{S_{k_1}^{s_1}}
    2^{\frac{k_2}2}  \| \psi_2 \|_{S_{k_2}^{s_2}} \\
    & \ls \la k_1\ra 2^{\frac{k}2} \| \phi \|_{S^+_{k}}
    \| \psi_1 \|_{S^{s_1}_{k_1}} \| \psi_2 \|_{S^{s_2}_{k_2}}.
  \end{align*}
  
    In the range $j \succeq k_1$, a similar argument above with $l=0$,
  i.e.\ no cap decomposition and no gain from the null-structure,
  gives the bound $ | I_{02}(\mathbf{k})| \ls  
  2^{\frac{k}2} \| \phi \|_{S^+_{k}} \| \psi_1 \|_{S^{s_1}_{k_1}} \|  \psi_2 \|_{S^{s_2}_{k_2}}$.
  
Thus we obtain 
\begin{equation} \label{C1I0}
|G_0(\mathbf{k})| \ls \la k_1 \ra 2^{-rk_1}. 
\end{equation}

{\it Contribution of $I_1(\mathbf{k})$:}  We split  $I_1(\mathbf{k})=I_{11}(\mathbf{k})+I_{12}(\mathbf{k})$ according to
 $j_1 \prec k_1$ and $j_1\succeq k_1$. Again, by Lemma \ref{lem:mod} there is
  no contribution if $j_1 \prec k_1$ in the case $s_1=+,s_2=-$.  With all other choices of signs, we can restrict the sum in $I_{11}$ to $j_1\succeq -k_1$, so that by Lemma \ref{lem:mod} with $2l=k_1+k_2-k-j_1\sim k_1-j_1$ we have
\begin{align*}
I_{11}(\mathbf{k})= \sum_{-k_1\preceq j_1 \prec k_1} \sum_{\kappa_1,\kappa_2 \in \mathcal{K}_l \atop
      \dist(s_1 \kappa_1,s_2\kappa_2)\ls 2^{-l}} 
      \Big\{\int  Q^+_{\leq j_1 } \phi \cdot \la P_{\kappa_1}Q_{j_1}^{s_1} \psi_1, \beta  P_{\kappa_2} Q_{\leq j_1}^{s_2}  \psi_2 \ra dx dt\Big\}.
  \end{align*}
Just as above, we invoke Lemma \ref{lem:stable}, to obtain 
  \begin{align*}
     \|\la P_{\kappa_1}Q_{j_1}^{s_1} \psi_1, \beta  P_{\kappa_2}  Q_{\leq j_1}^{s_2}  \psi_2 \ra\|_{L^{\frac43}_{t,x}}
\ls{}  2^{-l} \|P_{\kappa_1}Q_{j_1}^{s_1} \psi_1\|_{L^2} \| P_{\kappa_2}  Q_{\leq j_1}^{s_2}  \psi_2\|_{L^4_{t,x}},
  \end{align*}
for any $\kappa_1,\kappa_2 \in \mathcal{K}_l$ with $ \dist(s_1 \kappa_1,s_2\kappa_2)\ls 2^{-l}$. By H\"older's inequality and Cauchy-Schwarz we obtain
  \begin{align*}
 | I_{11}(\mathbf{k})|  & \ls{} \sum_{-k_1\preceq j_1 \prec k_1} \Big\{ 2^{-\frac{k_1-j_1}2} \| Q^{s_1}_{j_1} \psi_1\|_{L^2}  \|Q^+_{<j_1} \phi\|_{L^4_{t,x}}  
  \cdot\Big( \sum_{\kappa_2 \in \mathcal{K}_l}\| P_{\kappa_2}  Q_{\leq j_1}^{s_2}  \psi_2\|_{L^4_{t,x}}^2 \Big)^{\frac12}\Big\}\\
& \ls{} \sum_{-k_1\preceq j_1<k_1} 2^{-\frac{k_1-j_1}2} 2^{-\frac{j_1}{2}} \|\psi_1\|_{S^{s_1}_{k_1}}2^{\frac{k}{2}} \|\phi\|_{S^+_{k}}2^{\frac{k_2}{2}} \|\psi_2\|_{S^{s_2}_{k_2}}\\
   & \ls{} 2^{\frac{k}2} 2^{\frac{k_2-k_1}2} \la k_1\ra \| \phi \|_{S^+_{k}} \|\psi_1\|_{S^{s_1}_{k_1}} \| \psi_2 \|_{S^{s_2}_{k_2}}.
  \end{align*}
  The factor $2^{\frac{k_2-k_1}2}$ is acceptable in only one instance: when we seek to bound the term by 
$ \| \phi \|_{Z^{+}_{k}}  \| \psi_1 \|_{S^{s_1}_{k_1}} \| \psi_2 \|_{Z^{s_2}_{k_2}}$; precisely we have
\[
| I_{11}(\mathbf{k})| \ls 2^{k(\frac12-2r)+k_1(r-\frac12)} \la k_1 \ra \left( 2^{\frac{k}2+  r (k-k_1+k_2)}  \| \phi \|_{Z^{+}_{k}}  \| \psi_1 \|_{S^{s_1}_{k_1}} \| \psi_2 \|_{Z^{s_2}_{k_2}} \right).
\]
For the other bound, the balance of powers does not work; in that case we can use the $X^{s_1, \frac12 ,1}_{k_1, 4, \frac43}$ information on $\psi_1$, and estimate as follows: 
  \begin{align*}
  | I_{11}(\mathbf{k}) | & \ls{} \sum_{-k_1\preceq j_1 \prec k_1} \Big\{ 2^{-\frac{k_1-j_1}2} 
   \|Q^+_{<j_1} \phi\|_{L^8_t L^\frac83_{x}}   \Big( \sum_{\kappa_1 \in \mathcal{K}_l}\| P_{\kappa_1}  Q_{ j_1}^{s_1}  \psi_1\|_{L^\frac43_t L^4_x}^2 \Big)^{\frac12}
  \cdot \Big( \sum_{\kappa_2 \in \mathcal{K}_l}\| P_{\kappa_2}  Q_{\leq j_1}^{s_2}  \psi_2\|_{L^8_t L^\frac83_{x}}^2 \Big)^{\frac12}\Big\}\\
& \ls{} \sum_{-k_1\preceq j_1 \prec k_1} 2^{-\frac{k_1-j_1}2} 2^{- \frac{j_1}2}   \|Q_{ j_1}^{s_1} \psi_1\|_{X^{s_1, \frac12}_{k_1, 4, \frac43}} 2^{\frac{k}{4}} \|\phi\|_{S^+_{k}}2^{\frac{k_2}{4}} \|\psi_2\|_{S^{s_2}_{k_2}}\\
   & \ls{} 2^{\frac{k}2} 2^{(\rl -\frac12)k_1} \| \phi \|_{S^+_{k}} \|\psi_1\|_{Z_{k_1}} \| \psi_2 \|_{S^{s_2}_{k_2}}.
     \end{align*}
   In this case we obtain
  \[
   | I_{11}(\mathbf{k}) | \ls  2^{(\rl -\frac12-r )k_1} \left( 2^{\frac{k}2+ r (-k+k_1+k_2)} \| \phi \|_{S^{+}_{k}}  \| \psi_1 \|_{Z^{s_1}_{k_1}} \| \psi_2 \|_{Z^{s_2}_{k_2}} \right).
  \]
  In the range $j_1 \succeq k_1$, we forgo the gain from the null-structure and estimate the term in two ways just as above. First we estimate as follows 
\begin{align*}
 | I_{12}(\mathbf{k})\ls{}   &  \sum_{j_1\succeq k_1} \|Q^+_{<j_1} \phi\|_{L^4_{t,x}} 
 \| Q^{s_1}_{j_1} \psi_1\|_{L^2} \| Q_{\leq j_1}^{s_2}  \psi_2\|_{L^4_{t,x}}
 \ls \sum_{j_1\succeq k_1}  2^{-\frac{j_1}{2}} \|\psi_1\|_{S^{s_1}_{k_1}}2^{\frac{k}{2}}\|\phi\|_{S^+_{k}}2^{\frac{k_2}{2}}\|\psi_2\|_{S^{s_2}_{k_2}}\\
    \ls{}& 2^{\frac{k}2} 2^{\frac{k_2-k_1}2} \| \phi \|_{S^+_{k}} \|\psi_1\|_{S^{s_1}_{k_1}} \| \psi_2 \|_{S^{s_2}_{k_2}}.
  \end{align*}
 Just as above, the factor $2^{\frac{k_2-k_1}2}$ is acceptable only when bound the term by $ \| \phi \|_{Z^{+}_{k}}  \| \psi_1 \|_{S^{s_1}_{k_1}} \| \psi_2 \|_{Z^{s_2}_{k_2}}$, and in this case we obtain the same bound as the one we have for $I_{11}(\mathbf{k})$. 

Second we estimate as follows
 \begin{align*}
| I_{12}(\mathbf{k}) & \ls{} \sum_{j_1\succeq k_1} \|Q^+_{<j_1} \phi\|_{L^8_t L^\frac83_{x}}  \| Q^{s_1}_{j_1} \psi_1\|_{L^\frac43_t L^4_x} \| Q_{\leq j_1}^{s_2}  \psi_2\|_{L^8_t L^\frac83_{x}} \ls
2^{\frac{k}2} \| \phi \|_{S^+_{k}} \|\psi_1\|_{X^{s_1, \frac12,1}_{k_1, 4, \frac43}} \| \psi_2 \|_{S^{s_2}_{k_2}} 
   \sum_{ j_1 \succeq k_1} 2^{- \frac{j_1}2}  \\
   & \ls{} 2^{\frac{k}2} 2^{(\rl -\frac12)k_1}  \| \phi \|_{S^+_{k}} \|\psi_1\|_{Z_{k_1}} \| \psi_2 \|_{S^{s_2}_{k_2}}.
 \end{align*}
 Thus in this case we obtain $
 | I_{12}(\mathbf{k}) | \ls 2^{(\rl - \frac12 -r )k_1}  \left( 2^{\frac{k}2+ r (-k+k_1+k_2)} \| \phi \|_{S^{+}_{k}}  \| \psi_1 \|_{Z^{s_1}_{k_1}} \| \psi_2 \|_{Z^{s_2}_{k_2}} \right)$. Collecting all the bounds from the above gives the following:
 \begin{equation} \label{C1I1}
|G_1(\mathbf{k})| \ls \max (\la k_1 \ra 2^{k(\frac12-2r)+k_1(r-\frac12)}, 2^{(\rl - \frac12 -r )k_1}). 
\end{equation}

{\it Contribution of $I_2(\mathbf{k})$:} The analysis here is rather similar to the one for $I_0(\mathbf{k})$. We split
  $I_2(\mathbf{k})=I_{21}(\mathbf{k})+I_{22}(\mathbf{k})$ according to
 $j_2 \prec k_1$ and $j_2\succeq k_1$. Again, by Lemma \ref{lem:mod} there is
  no contribution if $j_2 \prec k_1$ in the case $s_1=+,s_2=-$, whereas in all other choices of signs, we can restrict the sum in $I_{21}(\mathbf{k})$ to $j_2\succeq -k_1$, so that by Lemma \ref{lem:mod} with $2l=k_1+k_2-k-j_2\sim k_1-j_2$ we repeat the argument for $ I_{11}(\mathbf{k})$ (the part of the argument involving only the $S$ type structures), except that we use the $L^4$ type norm for $\psi_1$ and $L^2$ norm for $Q_{j_2}^{s_2} \psi_2$,  to obtain
\[
  I_{21}(\mathbf{k}) \ls{}
\sum_{-k_1\preceq j_2 \prec k_1} 2^{\frac{k}{2}} \|\phi\|_{S^+_{k}} 2^{-\frac{k_1-j_2}2}  2^{\frac{k_1}{2}}  \|\psi_1\|_{S^{s_1}_{k_1}} 2^{-\frac{j_2}{2}}\|\psi_2\|_{S^{s_2}_{k_2}} \ls{} 2^{\frac{k}2}  \la k_1\ra \| \phi \|_{S^+_{k}} \|\psi_1\|_{S^{s_1}_{k_1}} \| \psi_2 \|_{S^{s_2}_{k_2}}.
\]
For the range $j_2 \succeq k_1$, then the same argument as above, but with no gain from the null-structure, gives the bound
\begin{align*}
  I_{22}(\mathbf{k})\ls{}   \sum_{j_2\succeq k_1}   2^{\frac{k}{2}}\|\phi\|_{S^+_{k}}  2^{\frac{k_1}{2}} \|\psi_1\|_{S^{s_1}_{k_1}} 2^{-\frac{j_2}{2}}\|\psi_2\|_{S^{s_2}_{k_2}} \ls{}  2^{\frac{k}2}  \| \phi \|_{S^+_{k}} \|\psi_1\|_{S^{s_1}_{k_1}} \| \psi_2 \|_{S^{s_2}_{k_2}}.
\end{align*}
Thus in this case we obtain
 \begin{equation} \label{C1I2}
|G_2(\mathbf{k})| \ls \la k_1 \ra 2^{-rk_1}.  
\end{equation}

From \eqref{C1I0},  \eqref{C1I1} and  \eqref{C1I2} we obtain, in this case, the following bound
\begin{equation} \label{C1}
|G(\mathbf{k})| \ls \la k_1 \ra 2^{-rk_1} + \max (\la k_1 \ra 2^{k(\frac12-2r)+k_1(r-\frac12)}, 2^{(\rl - \frac12 -r )k_1}).    
\end{equation}

\medskip

  {\bf Case 2:} $|k-k_1 | \leq 10$. A quick inspection of the argument in the previous case shows that we obtain the same bound with the roles of $k_1$ and $k_2$ reversed, that is 
\begin{equation} \label{C2}
|G(\mathbf{k})| \ls \la k_2 \ra 2^{-rk_2} + \max (\la k_2 \ra 2^{k(\frac12-2r)+k_2(r-\frac12)}, 2^{(\rl -\frac12 -r )k_2}).   
\end{equation}

\medskip

  {\bf Case 3:} $|k_1-k_2 | \leq 10$.

 {\it Contribution of $I_0(\mathbf{k})$:} We split
  $I_0(\mathbf{k})=I_{01}(\mathbf{k})+I_{02}(\mathbf{k})$ according to
  $j \prec k$ and $j\succeq k$. Then, due to Lemma \ref{lem:mod} there is
  no contribution if $j \prec k$ in the case $s_1=+,s_2=-$ or $s_1=-, s_2=+$ and $k \prec \min(k_1,k_2)$.
In all remaining cases, we can restrict the sum in $I_{01}$ to $j\succeq -k$, and running the analysis in an identical way to the corresponding term in the Case 1, we obtain $ | I_{01}(\mathbf{k})|  \ls \la k\ra  2^{\frac{k}2} \| \phi \|_{S^+_{k}}  \| \psi_1 \|_{S^{s_1}_{k_1}} \| \psi_2 \|_{S^{s_2}_{k_2}}$.

Let us now consider the range $j \succeq k$. In the case $s_1=s_2$, part iii) of Lemma \ref{lem:mod} implies that the integral is nonzero only if the frequencies in the supports of $\widehat{\psi_1}$ and $\widehat{\psi_2}$ make an angle of at most $2^{k-k_1}$, hence, we choose $l=k_1-k$. In the remaining case where $s_1=-$, $s_2=+$ and $k \succeq \min(k_1,k_2)$ we choose $l=0$.
Again, arguing as for $I_{01}(\mathbf{k})$ (see Case 1) we obtain
\begin{align*}
  |I_{02}(\mathbf{k})| \ls \sum_{ j \succeq k} 2^{-\frac{j}2}
    \| \phi \|_{S_{k}} 2^{-l} 2^{\frac{k_1}2} \|
    \psi_1 \|_{S_{k_1}^{s_1}}
    2^{\frac{k_2}2}  \| \psi_2 \|_{S_{k_2}^{s_2}} \ls 2^{\frac{k}2}
  \| \phi \|_{S^+_{k}}  \| \psi_1 \|_{S^{s_1}_{k_1}} \|\psi_2 \|_{S^{s_2}_{k_2}}.
\end{align*}

In the case $s_1=+$, $s_2=-$ or in the case $s_1=-$, $s_2=+$ and $k \prec \min(k_1,k_2)$, Lemma \ref{lem:mod} implies that there is only a contribution if $j\succeq k_1$. Then, the above argument with $l=0$ provides
\begin{align*}
 | I_{02}(\mathbf{k})| \ls  \sum_{j \succeq k_1}2^{-\frac{j}{2}}
  \| \phi \|_{S^+_{k}}  2^{\frac{k_1}2} \| \psi_1 \|_{S^{s_1}_{k_1}} 2^{\frac{k_2}2}\| \psi_2 \|_{S^{s_2}_{k_2}} \ls{}  2^{\frac{k_2}2}
  \| \phi \|_{S^+_{k}}  \| \psi_1 \|_{S^{s_1}_{k_1}} \|\psi_2 \|_{S^{s_2}_{k_2}}.
\end{align*}

The factor $2^{\frac{k_2}2} = 2^{\frac{k}2} \cdot 2^{\frac{k_2-k}2}$ is fine if we bound the term by 
$ \| \phi \|_{S^{+}_{k}}  \| \psi_1 \|_{Z^{s_1}_{k_1}} \| \psi_2 \|_{Z^{s_2}_{k_2}}$, that is 
\[
 | I_{02}(\mathbf{k})|  \ls 2^{k_1(\frac12-2r)+k(r-\frac12)} \left( 2^{\frac{k}2+ r (-k+k_1+k_2)} \| \phi \|_{S^{+}_{k}}  \| \psi_1 \|_{Z^{s_1}_{k_1}} \| \psi_2 \|_{Z^{s_2}_{k_2}} \right).  
\]
To bound using the other term, we estimate as follows: 
  \begin{align*}
  |I_{02}(\mathbf{k})|  & \ls{}     \sum_{j \succeq k_1}  \|Q^+_{j} \phi\|_{ L^\frac43_t L^4_x }   \|  Q_{ \leq j}^{s_1}  \psi_1\|_{L^8_t L^\frac83_{x}}   \| Q_{\leq j}^{s_2}  \psi_2\|_{L^8_t L^\frac83_{x}} \\
& \ls{}   \sum_{j \succeq k_1} 2^{- \frac{j}2}   \| \phi \|_{X^{s_1,\frac12 ,\infty}_{k_1, 4, \frac43}} 2^{\frac{k_1}{4}} \|\psi_1\|_{S^{s_1}_{k_1}}2^{\frac{k_2}{4}}
\|\psi_2\|_{S^{s_2}_{k_2}}\\
   & \ls{}     2^{\rl k} \|\phi \|_{Z_{k_1}^+} \| \psi_1 \|_{S^{s_1}_{k}} \| \psi_2 \|_{S^{s_2}_{k_2}}.
  \end{align*}
Thus in this case we obtain the bound
\begin{equation} \label{C3I0}
|G_0(\mathbf{k})| \ls \la k \ra 2^{-rk} + \max (2^{k_1(\frac12-2r)+k(r-\frac12)}, 2^{(\rl -r -\frac12)k} ).   
\end{equation}

{\it Contribution of $I_1(\mathbf{k})$:} Again, we split
  $I_1(\mathbf{k})=I_{11}(\mathbf{k})+I_{12}(\mathbf{k})$ according to
  $j_1 \prec k$ and $j_1 \succeq k$. Then, due to Lemma \ref{lem:mod} there is
  no contribution if $j_1 \prec k$ in the case $s_1=+,s_2=-$ or $s_1=-, s_2=+$ and $k \prec \min(k_1,k_2)$.
In all remaining cases, we can restrict the sum in $I_{11}$ to $j_1\succeq -k$, so that by Lemma \ref{lem:mod} with $2l=k_1+k_2-k-j_1$ we have 
 \[
I_{11}(\mathbf{k})= \sum_{-k\preceq j_1<k} \sum_{\kappa_1,\kappa_2 \in \mathcal{K}_l \atop
      \dist(s_1 \kappa_1,s_2\kappa_2)\ls 2^{-l}} \Big\{
\int  Q^+_{<j_1} \phi \cdot \la P_{\kappa_1}Q_{j_1}^{s_1}\ \psi_1, \beta  P_{\kappa_2}\ Q_{\leq j_1}^{s_2}  \psi_2 \ra dx dt\Big\}.
  \] 
  Using Lemma \ref{lem:stable}, H\"older's inequality and Cauchy-Schwarz we obtain
  \begin{align*}
 | I_{11}(\mathbf{k})|  & \ls{} \sum_{-k\preceq j_1 \prec k} \|Q^+_{<j_1} \phi\|_{L^4_{t,x}} 2^{-\frac{k_1+k_2-k-j_1}2} \| Q^{s_1}_{j_1} \psi_1\|_{L^2}  \|  Q_{\leq j_1}^{s_2}  \psi_2\|_{L^4_{t,x}[k_2;l]} \\
& \ls{} \sum_{-k\preceq j_1 \prec k} 2^{\frac{k}{2}} \|\phi\|_{S^+_{k}} 2^{-\frac{k_1+k_2-k-j_1}2}  2^{-\frac{j_1}{2}}\|\psi_1\|_{S^{s_1}_{k_1}}2^{\frac{k_2}{2}} \|\psi_2\|_{S^{s_2}_{k_2}}\\
  &  \ls{} 2^{\frac{k}2} 2^{\frac{k-k_1}{2}}\la k\ra \| \phi \|_{S^+_{k}} \|\psi_1\|_{S^{s_1}_{k_1}} \| \psi_2 \|_{S^{s_2}_{k_2}}.
  \end{align*}
 Let us now consider the case $j_1 \succeq k$. We use a similar dichotomy as for $I_{02}(\mathbf{k})$. In the case $s_1=+$, $s_2=-$ or in the case $s_1=-$, $s_2=+$ and $k \prec \min(k_1,k_2)$, Lemma \ref{lem:mod} implies that there is only a contribution if $j_1\succeq k_2$.
In that case, we obtain from the above argument with $l=0$
\begin{align*}
 | I_{12}(\mathbf{k})| \ls{}   \sum_{j_1\succeq k_2}  2^{\frac{k}{2}}\|\phi\|_{S^+_{k}}  2^{-\frac{j_1}{2}}\|\psi_1\|_{S^{s_1}_{k_1}}2^{\frac{k_2}{2}}\|\psi_2\|_{S^{s_2}_{k_2}}
\ls{}   2^{\frac{k}2} \| \phi \|_{S^+_{k}} \|\psi_1\|_{S^{s_1}_{k_1}} \| \psi_2 \|_{S^{s_2}_{k_2}}.
  \end{align*}

In the case $s_1=s_2$, part iii) of Lemma \ref{lem:mod} implies that the integral is nonzero only if the frequencies in the supports of $\widehat{\psi_1}$ and $\widehat{\psi_2}$ make an angle of at most $2^{k-k_1}$, hence, we choose $l=k_1-k$. Then just as above we obtain
 \begin{align*}
 | I_{12}(\mathbf{k})|  & \ls{} \sum_{j_1\succeq k} \|Q^+_{<j_1} \phi\|_{L^4_{t,x}} 2^{-(k_1-k)} \| Q^{s_1}_{j_1} \psi_1\|_{L^2}  \|  Q_{\leq j_1}^{s_2}  \psi_2\|_{L^4_{t,x}[k_2;l]} \\
& \ls{} \sum_{ j_1 \succeq k} 2^{\frac{k}{2}} \|\phi\|_{S^+_{k}} 2^{-(k_1-k)}  2^{-\frac{j_1}{2}}\|\psi_1\|_{S^{s_1}_{k_1}}2^{\frac{k_2}{2}} \|\psi_2\|_{S^{s_2}_{k_2}}\\
  &  \ls{} 2^{\frac{k}2} 2^{\frac{k-k_1}2}\| \phi \|_{S^+_{k}} \|\psi_1\|_{S^{s_1}_{k_1}} \| \psi_2 \|_{S^{s_2}_{k_2}}.
  \end{align*}
In the remaining case where $s_1=-$, $s_2=+$ and $k \succeq \min(k_1,k_2)$ we choose $l=0$ and, just as before, we obtain
 we obtain
\begin{align*}
 | I_{12}(\mathbf{k})| \ls{}   \sum_{j_1\geq k}  2^{\frac{k}{2}} \|\phi\|_{S^+_{k}}   2^{-\frac{j_1}{2}}\|\psi_1\|_{S^{s_1}_{k_1}}2^{\frac{k_2}{2}}\|\psi_2\|_{S^{s_2}_{k_2}}
\ls{}   2^{\frac{k}2}  \| \phi \|_{S^+_{k}} \|\psi_1\|_{S^{s_1}_{k_1}} \| \psi_2 \|_{S^{s_2}_{k_2}}.
  \end{align*}

Thus in this case we obtain the bound
\begin{equation} \label{C3I1}
|G_1(\mathbf{k})| \ls \la k \ra 2^{-rk}.   
\end{equation}

{\it Contribution of $I_2(\mathbf{k})$:} This is treated in the same way as $I_1(\mathbf{k})$.

Thus in this case we obtain the bound
\begin{equation} \label{C3I}
|G(\mathbf{k})| \ls \la k \ra 2^{-rk} + \max (2^{k_1(\frac12-2r)+k(r-\frac12)}, 2^{(\rl-r -\frac12)k}).   
\end{equation}

A quick investigation of the bounds in \eqref{C1}, \eqref{C2} and \eqref{C3I} shows that \eqref{G} holds true if $r \geq \frac12$ and $r +\frac12 \geq \rl$.

\end{proof}
 
\subsection{Estimates for dyadic pieces - part 2}
Our main result in this section is the following.

\begin{pro} \label{PTR2} Let $s_1,s_2\in\{+,-\}$. There exists 
  functions $\tilde G^1, \tilde G^2 : \N^3_0 \rightarrow (0,\infty)$ such that for all  $\phi=P_{k}\phi,\psi_i=P_{k_i}\Pi_{s_i}(D)\psi_i$, $i=1,2$, the  following estimate holds true:
  \begin{equation} \label{cunn22} 
  \Big| \int Q_{\succeq -k}^+ \phi \la \psi_1, \beta  \psi_2 \ra dx dt\Big| \ls{} \tilde G^1(\mathbf{k}) 2^{k(-r+\frac12+\rl)} 2^{r(k_1+k_2)}
     \| Q_{\succeq -k}^+ \phi\|_{ X^{+, \frac12, \infty}_{k,4,\frac43}}\|\psi_1\|_{S^{s_1}_{k_1}}\|\psi_2\|_{S^{s_2}_{k_2}},
\end{equation}
and 
 \begin{equation} \label{cunn222} 
  \Big| \int \phi \la Q_{\succeq -k_1} ^{s_1} \psi_1, \beta  \psi_2 \ra dx dt\Big| \ls{} \tilde G^2(\mathbf{k})  2^{k_1(-r+\rl)} 2^{(r+\frac12)k+r k_2}
    \|\phi\|_{S_k^{+}}  \| Q^{s_1}_{\succeq -k_1} \psi_1\|_{ X^{s_1,\frac12, \infty}_{k_1,4,\frac43}}   \|\psi_2\|_{S_{k_2}^{s_2}}   .
\end{equation}
Assuming that $r \geq \frac12$ and $r_0 \geq 1$, the functions $\tilde G^1, \tilde G^2$ satisfy \eqref{G}. 
\end{pro}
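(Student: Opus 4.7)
The plan is to run the same three-term modulation decomposition $\tilde I = \tilde I_0 + \tilde I_1 + \tilde I_2$ and the same three-case dichotomy ($|k-k_2|\le 10$, $|k-k_1|\le 10$, $|k_1-k_2|\le 10$) as in the proof of Proposition \ref{PTR}. The essential new difficulty is that one of the three factors is now measured in the $L^4_t L^{4/3}_x$-based space $X^{+,\frac12,\infty}_{k,4,\frac43}$ (respectively $X^{s_1,\frac12,\infty}_{k_1,4,\frac43}$), which is not $L^2_{t,x}$-based, so Plancherel is unavailable. Instead I will pair this factor against the product of the other two via H\"older, using the embeddings $L^{8/3}_t L^8_x \cdot L^{8/3}_t L^8_x \hookrightarrow L^{4/3}_t L^4_x$ and $L^{8}_t L^{8/3}_x \cdot L^{8/3}_t L^8_x \hookrightarrow L^{4/3}_t L^{4}_x$; both of these Strichartz norms on each $\psi_i$ are controlled by $\|\psi_i\|_{S^{s_i}_{k_i}}$, with the respective frequency losses $2^{3k_i/4}$ and $2^{k_i/4}$ visible in the definition \eqref{eq:sk}.

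For \eqref{cunn22} I first fix the modulation level $j\succeq -k$ of $\phi$ and use the intrinsic cap scale $2l=k-j$ built into $\|Q^+_j\phi\|_{X^{+,\frac12}_{k,4,4/3}}$. Decomposing further by modulations $j_1,j_2$ of $\psi_1,\psi_2$, I invoke Lemma \ref{lem:mod} to discard the vanishing configurations; in the surviving Case 2 interactions I decompose $\psi_1,\psi_2$ at the angular scale $l'$ dictated by $2l'\approx k_1+k_2-k-\max(j,j_1,j_2)$, and on each compatible cap pair $(\kappa_1,\kappa_2)$ with $\dist(s_1\kappa_1,s_2\kappa_2)\ls 2^{-l'}$ I extract the null gain $2^{-l'}$ via Lemma \ref{lem:stable}. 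H\"older together with Cauchy--Schwarz over caps --- exploiting the built-in $\ell^2$ cap structure of the $X$-norm --- then produces, for each fixed $j$, a bound of shape
\[
2^{-j/2}\,(\text{cap sum})\cdot \|Q^+_j\phi\|_{X^{+,\frac12}_{k,4,4/3}}\,\|\psi_1\|_{S^{s_1}_{k_1}}\,\|\psi_2\|_{S^{s_2}_{k_2}}.
\]
I then sum in $j$ against the $\ell^\infty_j$ structure of the $X^{+,\frac12,\infty}$-norm; the resulting geometric series in $2^{-j/2}$ together with at most logarithmic cap-sum losses is absorbed into $\tilde G^1$.

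The main technical obstacle is that the $X$-norm's intrinsic cap scale $l=(k-j)/2$ will not generally coincide with the null-structure scale $l'$ required by Lemma \ref{lem:stable}. I will bridge this gap with the transfer estimate \eqref{Kllg}, which switches between cap scales at cost $2^{|l-l'|/2}$. The delicate point in each sub-case is that this loss must be absorbed by the null gain $2^{-l'}$ together with the derivative budget carried by the weights $2^{k(-r+\frac12+r_0)}$ and $2^{r(k_1+k_2)}$; verifying this absorption uniformly in the frequency configuration is exactly what forces the thresholds $r\ge \tfrac12$ and $r_0\ge 1$ and pins down the functions $\tilde G^1,\tilde G^2$.

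For \eqref{cunn222} the argument is symmetric, with $\psi_1$ playing the role previously played by $\phi$: its modulation $j_1\succeq -k_1$ is fixed and the intrinsic cap scale is $2l=k_1-j_1$, while $\phi$ and $\psi_2$ are estimated by Strichartz from $S_k^{+}$ and $S^{s_2}_{k_2}$. Running the same three-case dichotomy and tracking exponents as in Proposition \ref{PTR}, one checks that $\tilde G^1$ and $\tilde G^2$ both satisfy \eqref{G} under the stated hypotheses.
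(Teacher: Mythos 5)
Your overall architecture coincides with the paper's: fix the modulation of the factor measured in the $X^{\cdot,\frac12,\infty}_{\cdot,4,\frac43}$ norm, decompose the other two factors by modulation, discard vanishing configurations via Lemma \ref{lem:mod}, extract the null gain at the angular scale it dictates via Lemma \ref{lem:stable}, bridge the mismatch with the intrinsic cap scale of the $X$-norm through \eqref{Kllg}, and only at the end sum over the fixed modulation against the $\ell^\infty_j$ structure; the same three-case frequency dichotomy and exponent bookkeeping then produce $\tilde G^1,\tilde G^2$ and the thresholds $r\ge\frac12$, $r_0\ge 1$. In this respect the plan is the paper's proof.

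There is, however, a concrete flaw in your stated H\"older toolkit. The second embedding, $L^{8}_t L^{8/3}_x \cdot L^{8/3}_t L^8_x \hookrightarrow L^{4/3}_t L^{4}_x$, is false: in time $\frac18+\frac38=\frac12\neq\frac34$ and in space $\frac38+\frac18=\frac12\neq\frac14$, so the product only lands in $L^2_tL^2_x$, and globally in time there is no way to pass from $L^2_t$ to $L^{4/3}_t$, nor from $L^2_x$ to $L^4_x$ without Bernstein. (The pairing you seem to have in mind, $L^8_tL^{8/3}_x\cdot L^8_tL^{8/3}_x\hookrightarrow L^4_tL^{4/3}_x$, is the correct dual pairing for the $(\frac43,4)$-based norm of Proposition \ref{PTR}, not for the $(4,\frac43)$-based norm appearing here.) Only your first embedding $L^{8/3}_tL^8_x\cdot L^{8/3}_tL^8_x\hookrightarrow L^{4/3}_tL^4_x$ is valid, and it handles only the interactions in which both $S$-normed factors sit at low modulation. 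In the interactions where one of the $S$-normed factors carries the dominant modulation $j'$ --- precisely where the $2^{-j'/2}$ gain must be harvested from its $\dot X^{\pm,\frac12,\infty}$ component, i.e.\ from an $L^2_{t,x}$-based quantity --- neither of your two pairings applies, and the argument as literally stated cannot close. What is needed there are mixed pairings such as $L^2_tL^4_x\cdot L^4_tL^{4/3}_x\cdot L^4_tL^\infty_x$ or $L^4_{t,x}\cdot L^4_tL^{4/3}_x\cdot L^2_tL^\infty_x$, with the $L^\infty_x$- and $L^4_x$-type norms produced by cap-localized Bernstein estimates whose losses must be tracked exactly as in the paper (cf.\ the bounds of the form \eqref{aux1}, \eqref{psi1bg}, \eqref{aux2}). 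Once you replace the false embedding by these mixed H\"older/Bernstein pairings and carry out that additional bookkeeping, your plan becomes the paper's argument.
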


It is clear that \eqref{eq:red-est-2}, with the choice $r_0=1$ and $r > \frac12$, is implied by the above result. 

\begin{proof}[Proof of \eqref{cunn222}] We fix $j_1 \succeq -k_1$ and let 
\[
I_{j_1}(\mathbf{k}) = \int \phi \la Q_{j_1} ^{s_1} \psi_1, \beta  \psi_2 \ra dx dt. 
\]
The estimate in \eqref{cunn222} will be retrieved by summing with respect to $j_1$ the estimates on $I_{j_1}(\mathbf{k})$.  We decompose $ I_{j_1}(\mathbf{k})=I_{0}(\mathbf{k})+I_{1}(\mathbf{k})+I_{2}(\mathbf{k})$, where
  \begin{align*}
    I_0(\mathbf{k}):=&  \int Q_{\leq j_1}^+ \phi \, \la Q^{s_1}_{j_1}\psi_1, \beta Q^{s_2}_{\leq j_1}\psi_2 \ra dx dt,\ \ 
    I_1(\mathbf{k}):=\sum_{j > j_1} \int Q_{j}^+ \phi  \,\la Q^{s_1}_{j_1} \psi_1, \beta Q^{s_2}_{\leq j}\psi_2 \ra dxdt,\\
    I_2(\mathbf{k}):=&\sum_{j_2 > j_1}\int Q_{< j_2}^+ \phi  \,\la   Q^{s_1}_{j_1}\psi_1, \beta Q^{s_2}_{j_2}\psi_2 \ra dx dt. \\
  \end{align*}
  In the above we abuse a little notation since we do not bring the factor $j_1$ in the notation for $ I_0(\mathbf{k}),  I_1(\mathbf{k}),  I_2(\mathbf{k})$; this is done in order to have simpler notation later.

  We split the argument into three cases.

\medskip

  {\bf Case 1:} $|k-k_1 | \leq 10$.

{\it Contribution of $I_0(\mathbf{k})$:}  
We split
  $I_0(\mathbf{k})=I_{01}(\mathbf{k})+I_{02}(\mathbf{k})$ according to
 $j_1 \prec k_2$ and $j_1\succeq k_2$. By Lemma \ref{lem:mod} there is
  no contribution if $j_1 \prec k_1$ in the case $s_1=+,s_2=-$.  With all other choices of signs, we can restrict the sum in $I_{01}$ to $j_1\succeq -k_2$, so that by Lemma \ref{lem:mod} with $2l=k_1+k_2-k-j_1\sim k_2-j_1$  we have
\begin{align*}
I_{01}(\mathbf{k})= \sum_{\kappa_1,\kappa_2 \in \mathcal{K}_l \atop
      \dist(s_1 \kappa_1,s_2\kappa_2)\ls 2^{-l}} 
      \Big\{\int  Q^+_{\leq j_1} \phi \cdot \la P_{\kappa_1}Q_{j_1}^{s_1} \psi_1, \beta  P_{\kappa_2} Q_{\leq j_1}^{s_2}  \psi_2 \ra dx dt\Big\}.
  \end{align*}
 A cap $\kappa_1 \in  \mathcal{K}_l$ contains $\approx 2^{\frac{k_1-k_2}2}$ smaller caps at angular scale $\frac{k_1-j_1}2$. From \eqref{Kllg} we obtain
  \[
  \left( \sum_{\kappa_1 \in \mathcal{K}_l} \| P_{\kappa_1}Q_{j_1}^{s_1} \psi_1 \|^2_{L^4_t L^\frac43_x} \right)^\frac12 \ls 2^{\frac{k_1-k_2}4} 2^{-\frac{j_1}2} \| Q_{j_1}^{s_1} \psi_1 \|_{X^{s_1, \frac12}_{k_1,4,\frac43}}.
  \]
 Using this, we estimate as follows:
 \[
  \begin{split}
|  I_{01}(\mathbf{k})| \ls &  \|Q_{\leq j_1}^+ \phi  \|_{L^\frac83_t L^8_x} 
\sum_{\kappa_1,\kappa_2 \in \mathcal{K}_l \atop    \dist(s_1 \kappa_1,s_2\kappa_2)\ls 2^{-l}} 
   2^{-l}   \| Q_{ j_1}^{s_1}  P_{\kappa_1}\psi_1\|_{L^4_t L^\frac43_x} \|Q_{\leq j_1}^{s_2} P_{\kappa_2}\psi_2\|_{L^\frac83_t L^8_x } \\
  \ls &   \|Q_{\leq j_1}^+ \phi  \|_{L^\frac83_t L^8_x} 2^{-l} 2^{-\frac{j_1}2} 2^{\frac{k_1-k_2}4} \| Q_{ j_1}^{s_1} \psi_1 \|_{X^{s_1,\frac12}_{k_1,4,\frac43}}  \| Q_{\leq j_1}^{s_2} \psi_2 \|_{L^\frac83_t L^8_x[k_2;l]} \\
  \ls &  2^{k} \| \phi  \|_{S_k^+} 2^{-\frac{3k_2}4}  \| Q_{ j_1}^{s_1} \psi_1 \|_{X^{s_1, \frac12}_{k_1, 4,\frac43}}  2^{\frac{3k_2}4} \| \psi_2 \|_{S^{s_2}_{k_2}} 
   =   2^{k} \| \phi  \|_{S_k^+}  \| Q_{ j_1}^{s_1} \psi_1 \|_{X^{s_1,\frac12}_{k_1,4,\frac43}}  \| \psi_2 \|_{S^{s_2}_{k_2}}.
\end{split}
  \]
  The case $j_1 \succeq k_2$ is similar, but easier. We work in a similar manner as above, but with $l=0$, and, based on \eqref{Kllg}, record the estimate
 \[
  \| Q_{j_1}^{s_1} \psi_1 \|_{L^4_t L^\frac43_x}  \ls \la 2^{\frac{k_1-j_1}4} \ra 2^{-\frac{j_1}2}  \| Q_{j_1}^{s_1} \psi_1 \|_{X^{s_1,\frac12}_{k_1,4,\frac43}}.
  \]
From this we obtain
\[
  \begin{split}
|  I_{02}(\mathbf{k})| \ls &  \|Q_{\leq j_1}^+ \phi  \|_{L^\frac83_t L^8_x} 
     \| Q_{ j_1}^{s_1}  \psi_1\|_{L^4_t L^\frac43_x} \|Q_{\leq j_1}^{s_2} \psi_2\|_{L^\frac83_t L^8_x } \\
  \ls &   2^{\frac{3k}4} \| \phi  \|_{S_k^+} 2^{-\frac{j_1}2} \la 2^{\frac{k_1-j_1}4} \ra \| Q_{j_1}^{s_1}  \psi_1 \|_{X^{s_1,\frac12}_{k_1,4,\frac43}} 2^{\frac{3k_2}4} \| \psi_2 \|_{S^{s_2}_{k_2}}  \\
 \ls  &  2^{k}  2^{\frac{k_2-j_1}2} \| \phi  \|_{S_k^+}  \| Q_{j_1}^{s_1}  \psi_1 \|_{X^{s_1, \frac12}_{k_1,4,\frac43}}   \| \psi_2 \|_{S^{s_2}_{k_2}} .
\end{split}
\]
Thus we conclude with the following
\begin{equation} \label{I0kcc1}
|  I_{0}(\mathbf{k})| \ls 2^k \min(1, 2^{\frac{k_2-j_1}2} ) \| \phi  \|_{S_k^+}  \| Q_{j_1}^{s_1}  \psi_1 \|_{X^{s_1, \frac12}_{k_1,4,\frac43}}   \| \psi_2 \|_{S^{s_2}_{k_2}} .
\end{equation}

{\it Contribution of $I_1(\mathbf{k})$:}  
We split  $I_1(\mathbf{k})=I_{11}(\mathbf{k})+I_{12}(\mathbf{k})$ according to
 $j \prec k_2$ and $j \succeq k_2$. Moreover, we let $I^j_{11}(\mathbf{k}), I^j_{12}(\mathbf{k})$ be the corresponding term with the value of $j$ fixed in each expression.

 Again, by Lemma \ref{lem:mod} there is  no contribution if $j \prec k_1$ in the case $s_1=+,s_2=-$.  With all other choices of signs, we can restrict the sum in $I_{11}$ to $j\succeq -k_2$, so that by Lemma \ref{lem:mod} with $2l=k_1+k_2-k-j\sim k_2-j$ we have
\begin{align*}
I^j_{11}(\mathbf{k})= \sum_{\kappa_1,\kappa_2 \in \mathcal{K}_l \atop
      \dist(s_1 \kappa_1,s_2\kappa_2)\ls 2^{-l}} 
      \Big\{\int  Q^+_{j} \phi \cdot \la P_{\kappa_1}Q_{j_1}^{s_1} \psi_1, \beta  P_{\kappa_2} Q_{\leq j}^{s_2}  \psi_2 \ra dx dt\Big\}.
  \end{align*}
  Based on the size of the Fourier support of the underlying function in each case we obtain
  \[
 \| Q_j^+ \phi\|_{L^2_t L^4_x} \ls 2^{\frac{k}2} \| Q_j^+ \phi\|_{L^2_{t,x}} \ls 2^{\frac{k}2} 2^{-\frac{j}2} \| Q_j^+ \phi\|_{S_{k}^+}, \ \|P_{\kappa_2} Q_{\leq j}^{s_2} \psi_2 \|_{L^4_t L^\infty_x} \ls 2^{\frac{3k_2+j}8}  \|P_{\kappa_2} Q_{\leq j}^{s_2} \psi_2 \|_{L^4_t L^4_x}.
 \]
   From the later estimate it follows that
 \begin{equation} \label{aux1}
 \left( \sum_{\kappa_2 \in \mathcal{K}_l}  \|P_{\kappa_2} Q_{\leq j}^{s_2} \psi_2 \|_{L^4_t L^\infty_x}^2 \right)^\frac12 \ls 2^{\frac{3k_2+j}8}  2^{\frac{k_2}2} \| \psi_2 \|_{S^{s_2}_{k_2}}.
 \end{equation}
  A cap $\kappa_1$ contains $\approx  \frac{2^{\frac{k_1-j_1}2}}{2^{\frac{k_2-j}2}} = 2^{\frac{k_1-k_2}2} 2^{\frac{j-j_1}2}$ caps $\kappa' \in \mathcal{K}_{\frac{k_1-j_1}2}$; invoking \eqref{Kllg} we obtain
\begin{equation} \label{psi1bg}
\left( \sum_{\kappa_1 \in \mathcal{K}_{l}}  \| P_{\kappa_1} Q^{s_1}_{j_1} \psi_1 \|_{L^4_t L^\frac43_x} ^2 \right)^\frac12
\ls 2^{\frac{k_1-k_2}4} 2^{\frac{j-j_1}4} 2^{-\frac{j_1}2} \|Q_{j_1}^{s_1} \psi_1\|_{X_{k_1,4,\frac32}^{s_1,\frac12}}. 
\end{equation}
Using the above estimates we bound as follows:
 \[
  \begin{split}
|  I_{11}^j(\mathbf{k})| \ls &  \|Q_{j}^+ \phi  \|_{L^2_t L^4_x} 
\sum_{\kappa_1,\kappa_2 \in \mathcal{K}_l \atop    \dist(s_1 \kappa_1,s_2\kappa_2)\ls 2^{-l}} 
   2^{-l}   \| Q_{ j_1}^{s_1}  P_{\kappa_1}\psi_1\|_{L^4_t L^\frac43_x} \|Q_{\leq j}^{s_2} P_{\kappa_2}\psi_2\|_{L^4_t L^\infty_x } \\
  \ls &   2^{-l} 2^{-\frac{j}2} 2^{\frac{k_1}2} 2^{\frac{3k_2+j}8}  2^{\frac{k_2}2} 2^{\frac{k_1-k_2}4} 2^{\frac{j-j_1}4} 2^{-\frac{j_1}2}  \| Q_j^+ \phi\|_{S_{k}^+}
  \|Q_{j_1}^{s_1} \psi_1\|_{X_{k_1,4,\frac43}^{s_1,\frac12}} \| \psi_2 \|_{S^{s_2}_{k_2}} \\
  \approx  &    2^{\frac{3k_2+3j}8} 2^{\frac{3k_1-k_2}4} 2^{-\frac{3j_1}4}  \| Q_j^+ \phi\|_{S_{k}^+}
  \|Q_{j_1}^{s_1} \psi_1\|_{X_{k_1,4,\frac43}^{s_1,\frac12}} \| \psi_2 \|_{S^{s_2}_{k_2}}.
\end{split}
  \]
From this it follows that
\[
|  I_{11}(\mathbf{k})| \ls \sum_{-k_2 \preceq j \prec k_2} |  I_{11}^j(\mathbf{k})|
\ls 2^{\frac{3k_1+2k_2}4} 2^{-\frac{3j_1}4}  \| \phi\|_{S_{k}^+}
  \|Q_{j_1}^{s_1} \psi_1\|_{X_{k_1,4,\frac32}^{s_1,\frac12}} \| \psi_2 \|_{S^{s_2}_{k_2}}.
\]
We now turn our attention to the case $j \succeq k_2$. The argument is fairly similar, but simpler. First we do not use any cap localization. Then we replace \eqref{aux1} and \eqref{psi1bg} with 
\[
 \|Q_{\leq j}^{s_2} \psi_2 \|_{L^4_t L^\infty_x} \ls 2^{\frac{k_2}2}  2^{\frac{k_2}2} \| \psi_2 \|_{S^{s_2}_{k_2}}, \ \ 
\| Q^{s_1}_{j_1} \psi_1 \|_{L^4_t L^\frac43_x}
\ls \la 2^{\frac{k_1-j_1}4} \ra 2^{-\frac{j_1}2} \|Q_{j_1}^{s_1} \psi_1\|_{X_{k_1,4,\frac32}^{s_1,\frac12}}. 
\]
Using the above estimates we bound as follows:
 \[
  \begin{split}
|  I_{12}^j(\mathbf{k})| \ls &  \|Q_{j}^+ \phi  \|_{L^2_t L^4_x} 
\| Q_{ j_1}^{s_1} \psi_1\|_{L^4_t L^\frac43_x} \|Q_{\leq j}^{s_2} \psi_2\|_{L^4_t L^\infty_x } \\
  \ls &   2^{-\frac{j}2} 2^{\frac{k}2} 2^{k_2} \la 2^{\frac{k_1-j_1}4} \ra 2^{-\frac{j_1}2}  \| Q_j^+ \phi\|_{S_{k}^+}
  \|Q_{j_1}^{s_1} \psi_1\|_{X_{k_1,4,\frac32}^{s_1,\frac12}} \| \psi_2 \|_{S^{s_2}_{k_2}} \\
  \approx  &    2^{\frac{k_2-j}2} 2^{\frac{k_1+k_2-j_1}2}  \la 2^{\frac{k_1-j_1}4} \ra \| Q_j^+ \phi\|_{S_{k}^+}
  \|Q_{j_1}^{s_1} \psi_1\|_{X_{k_1,4,\frac32}^{s_1,\frac12}} \| \psi_2 \|_{S^{s_2}_{k_2}}.
\end{split}
  \]
From this it follows that
\[
|  I_{12}(\mathbf{k})| \ls \sum_{j \succeq k_2} |  I_{12}^j(\mathbf{k})|
\ls 2^{\frac{k_1+k_2-j_1}2}  \la 2^{\frac{k_1-j_1}4} \ra \| \phi\|_{S_{k}^+}
  \|Q_{j_1}^{s_1} \psi_1\|_{X_{k_1,4,\frac32}^{s_1,\frac12}} \| \psi_2 \|_{S^{s_2}_{k_2}}.
\]
Adding the estimates for $I_{11}(\mathbf{k})$ and $ I_{12}(\mathbf{k})$, gives
the following estimate
\begin{equation} \label{I1kcc1}
    |  I_{1}(\mathbf{k})| \ls 2^{\frac{k_1+k_2-j_1}2}  \la 2^{\frac{k_1-j_1}4} \ra \| \phi\|_{S_{k}^+}  \|Q_{j_1}^{s_1} \psi_1\|_{X_{k_1,4,\frac32}^{s_1,\frac12}} \| \psi_2 \|_{S^{s_2}_{k_2}}.
\end{equation}

{\it Contribution of $I_2(\mathbf{k})$:}  We split  $I_2(\mathbf{k})=I_{21}(\mathbf{k})+I_{22}(\mathbf{k})$ according to
 $j_2 \prec k_2$ and $j_2 \succeq k_2$. Moreover, we let $I^{j_2}_{11}(\mathbf{k}), I^{j_2}_{12}(\mathbf{k})$ be the corresponding term with the value of $j_2$ fixed.

 Again, by Lemma \ref{lem:mod} there is
  no contribution if $j_2 \prec k_1$ in the case $s_1=+,s_2=-$.  With all other choices of signs, we can restrict the sum in $I_{21}^{j_2}$ to $j_2 \succeq -k_2$, so that by Lemma \ref{lem:mod} with $2l=k_1+k_2-k-j\sim k_2-j_2$ we have
\begin{align*}
I^{j_2}_{21}(\mathbf{k})= \sum_{\kappa_1,\kappa_2 \in \mathcal{K}_l \atop
      \dist(s_1 \kappa_1,s_2\kappa_2)\ls 2^{-l}} 
      \Big\{\int  Q^+_{< j_2} \phi \cdot \la P_{\kappa_1}Q_{j_1}^{s_1} \psi_1, \beta  P_{\kappa_2} Q_{j_2}^{s_2}  \psi_2 \ra dx dt\Big\}.
  \end{align*}
  Based on the size of the Fourier support we obtain
$ \|P_{\kappa_2} Q_{j_2}^{s_2} \psi_2 \|_{L^2_t L^\infty_x} \ls 2^{\frac{3k_2+j_2}4}  \|P_{\kappa_2} Q_{j_2}^{s_2} \psi_2 \|_{L^2_{t,x}}$, from which it follows that 
\[
 \left( \sum_{\kappa_2 \in \mathcal{K}_l} \|P_{\kappa_2} Q_{j_2}^{s_2} \psi_2 \|_{L^2_t L^\infty_x}^2 \right)^\frac12 \ls 2^{\frac{3k_2+j_2}4} 2^{-\frac{j_2}2} \|P_{\kappa_2} Q_{j_2}^{s_2} \psi_2 \|_{X^{s_2, \frac12}}.
 \]

 We have established in \eqref{psi1bg} (with the roles of $j$ replaced by $j_2$ here) the following
 \[
\left( \sum_{\kappa_1 \in \mathcal{K}_{l}}  \| P_{\kappa_1} Q^{s_1}_{j_1} \psi_1 \|_{L^4_t L^\frac43_x}^2 \right)^\frac12
\ls 2^{\frac{k_1-k_2}4} 2^{\frac{j_2-j_1}4} 2^{-\frac{j_1}2} \|Q_{j_1}^{s_1} \psi_1\|_{X_{k_1,4,\frac43}^{s_1,\frac12}}. 
\]
Based on the above estimates we obtain:
  \[
  \begin{split}
  |I_{21}^{j_2}(\mathbf{k})|  & \ls  \| Q^+_{< j_2} \phi \|_{L^4_{t,x}} \cdot 
  \sum_{\kappa_1,\kappa_2 \in \mathcal{K}_l \atop
      \dist(s_1 \kappa_1,s_2\kappa_2)\ls 2^{-l}}
   2^{-l} \| P_{\kappa_1}Q_{j_1}^{s_1} \psi_1 \|_{L^4_t L^\frac43_x} 
    \| P_{\kappa_2} Q_{j_2}^{s_2}  \psi_2 \|_{L^2_t L^\infty_x}
  \\
  & \ls 2^{-l} 2^{-\frac{j_2}2} 2^{\frac{k_1}2} 2^{\frac{3k_2+j_2}4}  2^{\frac{k_1-k_2}4} 2^{\frac{j_2-j_1}4} 2^{-\frac{j_1}2}  \| \phi\|_{S_{k}^+}
  \|Q_{j_1}^{s_1} \psi_1\|_{X_{k_1,4,\frac43}^{s_1,\frac12}} \| Q_{ j_2}^{s_2} \psi_2 \|_{S^{s_2}_{k_2}} \\
  & \approx    2^{\frac{k_2+2j_2}4} 2^{\frac{3k_1-k_2}4} 2^{-\frac{3j_1}4}  \| \phi\|_{S_{k}^+}
  \|Q_{j_1}^{s_1} \psi_1\|_{X_{k_1,4,\frac43}^{s_1,\frac12}} \| Q_{ j_2}^{s_2} \psi_2 \|_{S^{s_2}_{k_2}}.
  \end{split}
  \] 
From this it follows that
\[
 |I_{21}(\mathbf{k})|  \ls  \sum_{-k_2 \preceq j_2 \prec k_2} |I_{21}^{j_2}(\mathbf{k})| \ls 2^{\frac{3k_1+2k_2}4} 2^{-\frac{3j_1}4}  \| \phi\|_{S_{k}^+}
  \|Q_{j_1}^{s_1} \psi_1\|_{X_{k_1, 4,\frac43}^{s_1,\frac12}} \| \psi_2 \|_{S^{s_2}_{k_2}}.
\]
When $j_2 \succeq k_2$, we do not use a cap localization and replace the above basic estimates with
\[
 \|Q_{j_2}^{s_2} \psi_2 \|_{L^2_t L^\infty_x} \ls 2^{k_2} 2^{-\frac{j_2}2}  \| Q_{j_2}^{s_2} \psi_2 \|_{X^{s_2,\frac12}},
\  \| Q^{s_1}_{j_1} \psi_1 \|_{L^4_t L^\frac43_x}
\ls \la 2^{\frac{k_1-j_1}4} \ra 2^{-\frac{j_1}2} \|Q_{j_1}^{s_1} \psi_1\|_{X_{k_1, 4,\frac43}^{s_1,\frac12}}. 
\]
Based on the above estimates we obtain:
\[
  \begin{split}
  |I_{21}^{j_2}(\mathbf{k})|  & \ls  \| Q^+_{< j_2} \phi \|_{L^4_{t,x}}
    \| Q_{j_1}^{s_1} \psi_1 \|_{L^4_t L^\frac43_x} 
    \| Q_{j_2}^{s_2}  \psi_2 \|_{L^2_t L^\infty_x}
  \\
  & \ls 2^{\frac{k}2} 2^{k_2} 2^{-\frac{j_2}2}  \la 2^{\frac{k_1-j_1}4} \ra 
  2^{-\frac{j_1}2}  \| \phi\|_{S_{k}^+}
  \|Q_{j_1}^{s_1} \psi_1\|_{X_{k_1,4,\frac43}^{s_1,\frac12}} \| Q_{ j_2}^{s_2} \psi_2 \|_{S^{s_2}_{k_2}} \\
  & \approx    2^{\frac{k_2-j_2}2} 2^{\frac{k_1+k_2-j_1}2}  \la 2^{\frac{k_1-j_1}4} \ra \| \phi\|_{S_{k}^+}
  \|Q_{j_1}^{s_1} \psi_1\|_{X_{k_1,4,\frac43}^{s_1,\frac12}} \| Q_{ j_2}^{s_2} \psi_2 \|_{S^{s_2}_{k_2}}.
  \end{split}
  \] 
From this it follows that
\[
 |I_{22}(\mathbf{k})|  \ls  \sum_{j_2 \succeq k_2} |I_{22}^{j_2}(\mathbf{k})| \ls 2^{\frac{k_1+k_2-j_1}2}  \la 2^{\frac{k_1-j_1}4} \ra \| \phi\|_{S_{k}^+}
  \|Q_{j_1}^{s_1} \psi_1\|_{X_{k_1,4,\frac32}^{s_1,\frac12}} \| \psi_2 \|_{S^{s_2}_{k_2}}.
\]
Adding the estimates for $I_{21}(\mathbf{k})$ and $ I_{22}(\mathbf{k})$, gives
the following estimate
\begin{equation} \label{I2kcc1}
    |I_{2}(\mathbf{k})|  \ls 2^{\frac{k_1+k_2-j_1}2}  \la 2^{\frac{k_1-j_1}4} \ra  \| \phi\|_{S_{k}^+}  \|Q_{j_1}^{s_1} \psi_1\|_{X_{k_1,4,\frac43}^{s_1,\frac12}} \| \psi_2 \|_{S^{s_2}_{k_2}}.
\end{equation}
Adding the estimates \eqref{I0kcc1}, \eqref{I1kcc1} and \eqref{I2kcc1} and performing the summation with respect to $j_1 \succeq -k_1$ gives the following estimate
\begin{equation}
\Big| \int \phi \la Q_{\succeq -k_1} ^{s_1} \psi_1, \beta  \psi_2 \ra dx dt\Big| \ls{}  2^{\frac{3k_1+k_2}2}  \| \phi\|_{S_{k}^+}  \|\psi_1\|_{X_{k_1,4,\frac43}^{s_1,\frac12,\infty}} \| \psi_2 \|_{S^{s_2}_{k_2}},
\end{equation}
from which we conclude with 
\begin{equation} \label{tG2cc1}
\tilde G^2(\mathbf{k}) = 2^{\frac{k_2}2} 2^{\frac{3k_1}2} 2^{k_1(r-\rl)} 2^{-(r+\frac12)k -rk_2} \approx 2^{k_2(\frac12-r)} 2^{k_1(1-\rl)}
\end{equation}
in this case. 
  
\medskip

  {\bf Case 2:} $|k-k_2 | \leq 10$.

{\it Contribution of $I_0(\mathbf{k})$:}  We split
  $I_0(\mathbf{k})=I_{01}(\mathbf{k})+I_{02}(\mathbf{k})$ according to
 $j_1 \prec k_1$ and $j_1\succeq k_1$. Again, by Lemma \ref{lem:mod} there is
  no contribution if $j_1 \prec k_2$ in the case $s_1=+,s_2=-$.  With all other choices of signs, we can restrict the sum in $I_{01}$ to $j_1\succeq -k_1$, so that by Lemma \ref{lem:mod} with $2l=k_1+k_2-k-j_1\sim k_1-j_1$ we have
\begin{align*}
I_{01}(\mathbf{k})=  \sum_{\kappa_1,\kappa_2 \in \mathcal{K}_l \atop
      \dist(s_1 \kappa_1,s_2\kappa_2)\ls 2^{-l}} 
      \Big\{\int  Q^+_{\leq j_1} \phi \cdot \la P_{\kappa_1}Q_{j_1}^{s_1} \psi_1, \beta  P_{\kappa_2} Q_{\leq j_1}^{s_2}  \psi_2 \ra dx dt\Big\},
  \end{align*}
and estimate as follows:
 \[
  \begin{split}
|  I_{01}(\mathbf{k})| \ls &  \|Q_{\leq j_1}^+ \phi  \|_{L^\frac83_t L^8_x} 
\sum_{\kappa_1,\kappa_2 \in \mathcal{K}_l \atop    \dist(s_1 \kappa_1,s_2\kappa_2)\ls 2^{-l}} 
   2^{-l}   \| Q_{ j_1}^{s_1}  P_{\kappa_1}\psi_1\|_{L^4_t L^\frac43_x} \|Q_{\leq j_1}^{s_2} P_{\kappa_2}\psi_2\|_{L^\frac83_t L^8_x } \\
  \ls &   \|Q_{\leq j_1}^+ \phi  \|_{L^\frac83_t L^8_x} 2^{-l} 2^{-\frac{j_1}2}  \| Q_{j_1}^{s_1} \psi_1 \|_{X^{s_1, \frac12}_{k_1,4,\frac43}}  \| \psi_2 \|_{L^\frac83_t L^8_x[k_2;l]} \\
  \ls &  2^{\frac{3k}4} \| \phi  \|_{S_k^+} 2^{-\frac{k_1}2}  \| Q_{j_1}^{s_1} \psi_1 \|_{X^{s_1, \frac12}_{k_1,4,\frac43}}  2^{\frac{3k_2}4} \| \psi_2 \|_{S^{s_2}_{k_2}}. 
\end{split}
  \]
The case $j_1 \succeq k_1$ is similar, but with no cap localization. Here we obtain
\[
|  I_{02}(\mathbf{k})| \ls 2^{\frac{3k-k_1}2} 2^{\frac{k_1-j_1}2} \| \phi  \|_{S_k^+}   \| Q_{j_1}^{s_1} \psi_1 \|_{X^{s_1,\frac12}_{k_1,4,\frac43}}  \| \psi_2 \|_{S^{s_2}_{k_2}}. 
\]
Thus we conclude with 
\begin{equation} \label{I0kb}
|  I_{0}(\mathbf{k})| \ls 2^{\frac{3k-k_1}2} \min(1,2^{\frac{k_1-j_1}2}) \| \phi  \|_{S_k^+}   \| Q_{j_1}^{s_1} \psi_1 \|_{X^{s_1, \frac12}_{k_1,4,\frac43}}  \| \psi_2 \|_{S^{s_2}_{k_2}}. 
\end{equation}

{\it Contribution of $I_1(\mathbf{k})$:}  
We split
  $I_1(\mathbf{k})=I_{11}(\mathbf{k})+I_{12}(\mathbf{k})$ according to
 $j \prec k_1$ and $j \succeq k_1$. Again, by Lemma \ref{lem:mod} there is
  no contribution if $j \prec k_1$ in the case $s_1=+,s_2=-$.  With all other choices of signs, we can restrict the sum in $I_{11}$ to $j\succeq -k_1$, so that by Lemma \ref{lem:mod} with $2l=k_1+k_2-k-j\sim k_1-j$ we have
\begin{align*}
I^j_{11}(\mathbf{k})= \sum_{\kappa_1,\kappa_2 \in \mathcal{K}_l \atop
      \dist(s_1 \kappa_1,s_2\kappa_2)\ls 2^{-l}} 
      \Big\{\int  Q^+_{j} \phi \cdot \la P_{\kappa_1}Q_{j_1}^{s_1} \psi_1, \beta  P_{\kappa_2} Q_{\leq j}^{s_2}  \psi_2 \ra dx dt\Big\}.
  \end{align*}
For the next estimate we use two ingredients: a cap in  $\mathcal{K}_l$ contains $\approx 2^{\frac{j-j_1}2}$ caps in $\mathcal{K}_{l'}, l'=\frac{k_1-j_1}2$ and 
  the Bernstein inequality for caps $k' \in \mathcal{K}_{l'}$  which gives us  $\| Q_{ j_1}^{s_1}  P_{\kappa'}\psi_1\|_{L^4_{t,x}} \ls 2^{\frac{k_1+\frac{k_1+j_1}2}2} \| Q_{ j_1}^{s_1}  P_{\kappa'}\psi_1\|_{L^4_{t} L^\frac43_x}$;  thus we invoke \eqref{Kllg} to obtain
    \[
  \left( \sum_{\kappa_1 \in \mathcal{K}_l}  \| Q_{ j_1}^{s_1}  P_{\kappa_1}\psi_1\|_{L^4_{t,x}}^2 \right)^\frac12 \ls 
  2^{\frac{j-j_1}4} 2^{\frac{k_1+\frac{k_1+j_1}2}2} 2^{-\frac{j_1}2}\| Q_{ j_1}^{s_1}  \psi_1 \|_{X^{s_1, \frac12}_{k_1,4,\frac43}}. 
  \]
From this we obtain:
 \[
  \begin{split}
|  I^j_{11}(\mathbf{k})| \ls &  \|Q_{ j}^+ \phi  \|_{L^2_{t,x}} 
\sum_{\kappa_1,\kappa_2 \in \mathcal{K}_l \atop    \dist(s_1 \kappa_1,s_2\kappa_2)\ls 2^{-l}} 
   2^{-l}   \| Q_{ j_1}^{s_1}  P_{\kappa_1}\psi_1\|_{L^4_{t,x}} \|Q_{\leq j}^{s_2} P_{\kappa_2}\psi_2\|_{L^4_{t,x} } \\
  \ls &   2^{-\frac{j}2} \|  \phi  \|_{S_k^+} 2^{-l} 2^{-\frac{j_1}2} 2^{\frac{j-j_1}4}  2^{\frac{k_1+\frac{k_1+j_1}2}2}  \| Q_{ j_1}^{s_1} \psi_1 \|_{X^{s_1, \frac12}_{k_1, 4,\frac43}}  2^{\frac{k_2}2} \| \psi_2 \|_{S^{s_2}_{k_2}} \\
      \ls &  2^{\frac{2k_1+k}2} 2^{\frac{j-k_1}4} 2^{-\frac{k_1+j_1}2} \|   \phi  \|_{S_k^+}   \| Q_{ j_1}^{s_1}  \psi_1 \|_{X^{s_1, \frac12}_{k_1,4,\frac43}}   \| \psi_2 \|_{S^{s_2}_{k_2}}. 
\end{split}
  \]
  From this it follows that
  \[
  |  I_{11}(\mathbf{k})| \ls \sum_{-k_1 \preceq j \prec k_1} | I^j_{11}(\mathbf{k})| \ls 2^{\frac{2k_1+k}2} 2^{- \frac{k_1+j_1}2} \|   \phi  \|_{S_k^+}   \| Q_{ j_1}^{s_1}  \psi_1 \|_{X^{s_1,\frac12}_{k_1,4,\frac43}}   \| \psi_2 \|_{S^{s_2}_{k_2}}. 
  \]
    The case $j \succeq k_1$ is similar, but easier. There is no angular separation and we use  the estimate 
    \[
     \| Q_{ j_1}^{s_1}  \psi_1\|_{L^4_{t,x}} \ls 2^{k_1} 2^{-\frac{j_1}2} \| Q_{ j_1}^{s_1}  \psi_1 \|_{X^{s_1, \frac12}_{k_1,4,\frac43}}, 
  \]
which is obtained in a similar way as above.  From this we obtain
   \[
   |  I^j_{12}(\mathbf{k})| \ls 2^{\frac{2k_1+k}2}  2^{\frac{k_1-j}2} 2^{- \frac{(k_1+j_1)}2}  \|  Q_j^+ \phi  \|_{S_k^+}   \| Q_{ j_1}^{s_1}  \psi_1 \|_{X^{s_1,\frac12}_{k_1,4,\frac43}}   \| \psi_2 \|_{S^{s_2}_{k_2}}. 
   \]
   and further that
   \[
    |  I_{12}(\mathbf{k})| \ls \sum_{j \succeq k_1} |  I^j_{12}(\mathbf{k})| \ls 
    2^{\frac{2k_1+k}2}  2^{- \frac{(k_1+j_1)}2}  
    \|   \phi  \|_{S_k^+}   \| Q_{ j_1}^{s_1}  \psi_1 \|_{X^{s_1, \frac12}_{k_1,4,\frac43}}   \| \psi_2 \|_{S^{s_2}_{k_2}}. 
   \]
 Thus we conclude with 
\begin{equation} \label{I1kb}
|  I_{1}(\mathbf{k})| \ls 2^{\frac{2k_1+k}2}  2^{-\frac{(k_1+j_1)}2} \| \phi  \|_{S_k^+}   \| Q_{j_1}^{s_1} \psi_1 \|_{X^{s_1, \frac12}_{k_1,4,\frac43}}  \| \psi_2 \|_{S^{s_2}_{k_2}}. 
\end{equation}

{\it Contribution of $I_2(\mathbf{k})$:} This case is entirely similar to $I_1(\mathbf{k})$ , just that we switch the use of norms on $\phi$ and $\psi_2$. We claim the following:
\begin{equation} \label{I2kb}
|  I_{2}(\mathbf{k})| \ls 2^{\frac{2k_1+k}2}  2^{- \frac{(k_1+j_1)}2} \| \phi  \|_{S_k^+}   \| Q_{j_1}^{s_1} \psi_1 \|_{X^{s_1, \frac12}_{k_1,4,\frac43}}  \| \psi_2 \|_{S^{s_2}_{k_2}}, 
\end{equation}
and leave the details as an exercise. 
From \eqref{I0kb},  \eqref{I1kb} and  \eqref{I2kb}, we obtain 
\[
\begin{split}
  \sum_{j_1 \succeq -k_1} |I_{j_1}(\mathbf{k}) | \ls 
  2^{\frac{2k+k_1}2}   \|  \phi\|_{S^+_{k}} \| Q_{\succeq -k_1} ^{s_1} \psi_1\|_{X_{k_1,4,\frac43}^{s_1,\frac12,\infty}} \|  \psi_2 \|_{S^{s_2}_{k_2}},
  \end{split}
  \]
which leads the following bound
\begin{equation} \label{tG2cc2}
\tilde G^2(\mathbf{k}) = 2^{\frac{3k}2}  2^{-k_1(-r+\rl)} 2^{-(r+\frac12)k-r k_2} \approx 2^{k_1(r-\rl)} 2^{k(1-2r)},
\end{equation}
for this case. 

\vspace{.2in}

 {\bf Case 3:} $|k_1-k_2 | \leq 10$.

{\it Contribution of $I_0(\mathbf{k})$:}  We split
  $I_0(\mathbf{k})=I_{01}(\mathbf{k})+I_{02}(\mathbf{k})$ according to
 $j_1 \prec k_1$ and $j_1\succeq k_1$. Again, by Lemma \ref{lem:mod} there is
  no contribution if $j_1 \prec k_1$ in the case $s_1=+,s_2=-$ or $s_1=-,s_2=+$ and $k \prec \min (k_1,k_2)$.  With all other choices of signs, we can restrict the analysis of $I_{01}$ to the case $j_1\succeq -k$; by Lemma \ref{lem:mod} with $2l=k_1+k_2-k-j_1\sim 2k_1-k-j_1$ we could localize in caps of angular size $2^{-l}$, but since the structure for $Q_{j_1}^{s_1} \psi_1$ comes at angular scale $l'=\frac{k_1-j_1}2 \leq l$ it is more convenient to localize at this scale; thus we write
  \begin{align*}
I_{01}(\mathbf{k})=  \sum_{\kappa_1,\kappa_2 \in \mathcal{K}_{l'} \atop
      \dist(s_1 \kappa_1,s_2\kappa_2)\ls 2^{-l'}} 
      \Big\{\int  Q^+_{\leq j_1} \phi \cdot \la P_{\kappa_1}Q_{j_1}^{s_1} \psi_1, \beta  P_{\kappa_2} Q_{\leq j_1}^{s_2}  \psi_2 \ra dx dt\Big\},
  \end{align*}
and estimate as follows:
 \[
  \begin{split}
|  I_{01}(\mathbf{k})| \ls &  \|Q_{\leq j_1}^+ \phi  \|_{L^\frac83_t L^8_x} 
\sum_{\kappa_1,\kappa_2 \in \mathcal{K}_{l'} \atop    \dist(s_1 \kappa_1,s_2\kappa_2)\ls 2^{-l'}} 
   2^{-l'}   \| Q_{ j_1}^{s_1}  P_{\kappa_1}\psi_1\|_{L^4_t L^\frac43_x} \|Q_{\leq j_1}^{s_2} P_{\kappa_2}\psi_2\|_{L^\frac83_t L^8_x } \\
  \ls &   \|Q_{\leq j_1}^+ \phi  \|_{L^\frac83_t L^8_x} 2^{-l'} 2^{-\frac{j_1}2}  \| Q_{ j_1}^{s_1} \psi_1 \|_{X^{s_1, \frac12}_{k_1,4,\frac43}}  \| \psi_2 \|_{L^\frac83_t L^8_x[k_2;l']} \\
  \ls &  2^{\frac{3k}4} \| \phi  \|_{S_k^+} 2^{-\frac{k_1}2}  \| Q_{ j_1}^{s_1} \psi_1 \|_{X^{s_1 ,\frac12}_{k_1,4,\frac43}}  2^{\frac{3k_2}4} \| \psi_2 \|_{S^{s_2}_{k_2}} \\
   \ls &   2^{\frac{3k+k_2}4} \| \phi  \|_{S_k^+}  \| Q_{ j_1}^{s_1} \psi_1 \|_{X^{s_1,\frac12}_{k_1,4,\frac43}}   \| \psi_2 \|_{S^{s_2}_{k_2}}. 
\end{split}
  \]
For the case $j_1 \succeq k_1$ we forgo the angular localization and estimate directly
 \[
  \begin{split}
|  I_{02}(\mathbf{k})| \ls &  \|Q_{\leq j_1}^+ \phi  \|_{L^\frac83_t L^8_x}     \| Q_{ j_1}^{s_1}  \psi_1\|_{L^4_t L^\frac43_x} \|Q_{\leq j_1}^{s_2} \psi_2\|_{L^\frac83_t L^8_x } \\
  \ls &  2^{\frac{3k}4} \| \phi  \|_{S_k^+} 2^{-\frac{j_1}2}  \| Q_{ j_1}^{s_1} \psi_1 \|_{X^{s_1, \frac12}_{4,\frac43}}  2^{\frac{3k_2}4} \| \psi_2 \|_{S^{s_2}_{k_2}} \\
   \ls &   2^{\frac{3k+k_2}4}  2^{\frac{k_1-j_1}2}  \| \phi  \|_{S_k^+}  \| Q_{ j_1}^{s_1} \psi_1 \|_{X^{s_1,\frac12}_{4,\frac43}}   \| \psi_2 \|_{S^{s_2}_{k_2}}. 
\end{split}
 \]
Thus we conclude with
\begin{equation} \label{I0kc}
|  I_{0}(\mathbf{k})| \ls 2^{\frac{3k+k_2}4} \min(1,2^{\frac{k_1-j_1}2}) \| \phi  \|_{S_k^+}  \| Q_{ j_1}^{s_1} \psi_1 \|_{X^{s_1, \frac12}_{k_1,4,\frac43}}   \| \psi_2 \|_{S^{s_2}_{k_2}}. 
\end{equation}

{\it Contribution of $I_1(\mathbf{k})$:}  
We split  $I_1(\mathbf{k})=I_{11}(\mathbf{k})+I_{12}(\mathbf{k})$ according to
 $j \prec k_1$ and $j \succeq k_1$. Again, by Lemma \ref{lem:mod} there is
  no contribution if $j \prec k_1$ in the case $s_1=+,s_2=-$ or $s_1=-,s_2=+$ and $k \prec \min (k_1,k_2)$.  With all other choices of signs, we can restrict the sum in $I_{11}$ to $j\succeq -k$, so that by Lemma \ref{lem:mod} with $2l=2k_1-k-j$ can localize in caps of size $2^{-l}$ in the following sense
 \begin{align*}
I^j_{11}(\mathbf{k})=  \sum_{\kappa_1,\kappa_2 \in \mathcal{K}_l \atop
      \dist(s_1 \kappa_1,s_2\kappa_2)\ls 2^{-l}} 
      \Big\{\int  Q^+_{j} \phi \cdot \la P_{\kappa_1}Q_{j_1}^{s_1} \psi_1, \beta  P_{\kappa_2} Q_{\leq j}^{s_2}  \psi_2 \ra dx dt\Big\}.
  \end{align*}
    The angular scale used for $Q_{j_1}^{s_1} \psi_1$ in $X^{s_1,\frac12}_{4,\frac43}$ is $l'=\frac{k_1-j_1}2$; from
   \eqref{Kllg} we obtain
\[
\left( \sum_{\kappa_1 \in \mathcal{K}_{l}}  \| P_{\kappa_1} Q^{s_1}_{j_1} \psi_1 \|_{L^4_t L^\frac43_x} ^2 \right)^\frac12
\ls 2^{\frac{|k_1+j_1-k-j|}4} 2^{-\frac{j_1}2} \|Q_{j_1}^{s_1} \psi_1\|_{X_{k_1,4,\frac43}^{s_1,\frac12}},
\]
  Based on this we estimate as follows
  \[
  \begin{split}
|  I^j_{11}(\mathbf{k})| \ls &  \|Q_{ j}^+ \phi  \|_{L^2_{t} L^\infty_x} 
\sum_{\kappa_1,\kappa_2 \in \mathcal{K}_{l} \atop    \dist(s_1 \kappa_1,s_2\kappa_2)\ls 2^{-l}} 
   2^{-l}   \| Q_{ j_1}^{s_1}  P_{\kappa_1}\psi_1\|_{L^4_t L^\frac43_x} \|Q_{\leq j}^{s_2} P_{\kappa_2}\psi_2\|_{L^4_{t,x} } \\
  \ls &  2^{k} \|Q_{ j}^+ \phi  \|_{L^2_{t,x}} 2^{-l} 2^{\frac{|k_1+j_1-k-j|}4}  2^{-\frac{j_1}2}  \| \psi_1 \|_{X^{s_1 ,\frac12}_{k_1,4,\frac43}}  \| Q_{\leq j}^{s_2} \psi_2 \|_{L^4_{t,x}[k_2;l]} \\
  \ls &  2^{k-\frac{j}2} \| \phi  \|_{S_k^+}  2^{-\frac{2k_1-k-j}2} 2^{\frac{|k_1+j_1-k-j|}4}  2^{-\frac{j_1}2}  \| \psi_1 \|_{X^{s_1 ,\frac12}_{k_1,4,\frac43}}  2^{\frac{k_2}2} \| \psi_2 \|_{S^{s_2}_{k_2}} \\
   \ls &   2^{\frac{3k-k_1-j_1}2}  2^{\frac{|k_1+j_1-k-j|}4} \| \phi  \|_{S_k^+}  \| \psi_1 \|_{X^{s_1 ,\frac12}_{k_1,4,\frac43}}   \| \psi_2 \|_{S^{s_2}_{k_2}}. 
\end{split}
\]
From this we obtain
\[
|  I_{11}(\mathbf{k})| \ls \sum_{-k \preceq j \prec k_1} |  I^j_{11}(\mathbf{k})| \ls
\max( 2^{\frac{6k-k_1-j_1}4}, 2^{2k-\frac{3(k_1+j_1)}4} )  \| \phi  \|_{S_k^+}  \| \psi_1 \|_{X^{s_1 ,\frac12}_{k_1,4,\frac43}}   \| \psi_2 \|_{S^{s_2}_{k_2}}, 
\]
by considering separately the cases $k_1+j_1 \geq 2k$ and $k_1+j_1 \leq 2k$. 

We continue with the estimate for $I_{12}(\mathbf{k})$, that is for the terms with $j \succeq k_1$. In this case we forgo the use of the null condition in order to fit all the possible choices for $s_1,s_2$ into the same analysis. From \eqref{Kllg} we obtain
\[
 \| Q^{s_1}_{j_1} \psi_1 \|_{L^4_t L^\frac43_x} 
\ls \la 2^{\frac{k_1-j_1}4} \ra 2^{-\frac{j_1}2} \|Q_{j_1}^{s_1} \psi_1\|_{X_{k_1,4,\frac43}^{s_1,\frac12}}.
\]
Then we estimate as follows:
\[
  \begin{split}
|  I^j_{12}(\mathbf{k})| \ls &  \|Q_{ j}^+ \phi  \|_{L^2_{t} L^\infty_x} \| Q_{ j_1}^{s_1} \psi_1\|_{L^4_t L^\frac43_x} \|Q_{\leq j}^{s_2} \psi_2\|_{L^4_{t,x} } \\
  \ls &   2^{k-\frac{j}2} \| \phi  \|_{S_k^+}  \la 2^{\frac{k_1-j_1}4} \ra 2^{-\frac{j_1}2}  \| \psi_1 \|_{X^{s_1, \frac12}_{k_1, 4,\frac43}}  2^{\frac{k_2}2} \| \psi_2 \|_{S^{s_2}_{k_2}} \\
   \ls &  2^{k+k_1} 2^{\frac{k_1-j}2} 2^{-\frac{k_1+j_1}2} \| \phi  \|_{S_k^+}  \| \psi_1 \|_{X^{s_1, \frac12}_{k_1,4,\frac43}}   \| \psi_2 \|_{S^{s_2}_{k_2}}. 
\end{split}
\]
From this we obtain
\[
|  I_{12}(\mathbf{k})| \ls \sum_{j \succeq k_1} |  I^j_{12}(\mathbf{k})| \ls
2^{k+k_1} 2^{-\frac{k_1+j_1}2} \| \phi  \|_{S_k^+}  \| \psi_1 \|_{X^{s_1 ,\frac12}_{k_1,4,\frac43}}   \| \psi_2 \|_{S^{s_2}_{k_2}}, 
\]
Adding up the contributions of $ I_{11}(\mathbf{k})$ and $ I_{12}(\mathbf{k})$, and considering the two cases $k_1+j_1 \leq 2k$ and $k_1+j_1 \geq 2k$ gives the following unifying bound
\begin{equation} \label{I1kc}
  | I_{1}(\mathbf{k})| \ls  2^{k+k_1} 2^{-\frac{k_1+j_1}4} \| \phi  \|_{S_k^+}  \| \psi_1 \|_{X^{s_1, \frac12}_{k_1,4,\frac43}}   \| \psi_2 \|_{S^{s_2}_{k_2}}. 
  \end{equation}

{\it Contribution of $I_2(\mathbf{k})$:} A quick inspection of the argument for $I_1(\mathbf{k})$ shows that we can use the same approach with only two modifications:

\noindent
- we use the $L^4_t L^\infty_x$ norm on $\phi$ via the embedding
\[
\|Q_{< j_2}^+ \phi  \|_{L^4_{t} L^\infty_x}  \ls 2^{\frac{k}2} \|Q_{ j}^+ \phi  \|_{L^4_{t,x}} \ls 2^k \| \phi  \|_{S_k^+};
\]
- we use (localized versions of) the $L^2_t L^4_x$ norm on $\psi_2$
\[
\left(  \sum_{\kappa \in \mathcal{K}_l} \| Q^{s_2}_{j_2} P_{\kappa} \psi_2 \|_{L^2_t L^4_x}^2  \right)^\frac12 
\ls 2^{\frac{k_2}2} \| Q^{s_2}_{j_2} \psi_2 \|_{L^2_{t,x}} \ls 2^{\frac{k_2}2} 2^{-\frac{j_2}2} \|\psi_2\|_{S^{s_2}_{k_2}}. 
\]
Thus we claim 
 \begin{equation} \label{I2kc}
  | I_{2}(\mathbf{k})| \ls  2^{k+k_1} 2^{-\frac{k_1+j_1}4} \| \phi  \|_{S_k^+}  \| \psi_1 \|_{X^{s_1 ,\frac12}_{k_1,4,\frac43}}   \| \psi_2 \|_{S^{s_2}_{k_2}},
  \end{equation}
and leave the details as an exercise.

From \eqref{I0kc},  \eqref{I1kc} and  \eqref{I2kc}, we obtain 
\[
\begin{split}
 \sum_{j_1 \succeq -k_1} |I_{j_1}(\mathbf{k}) | & \ls 
 \sum_{j_1 \succeq -k_1} 2^{k+k_1} 2^{-\frac{k_1+j_1}8}    \|  \phi\|_{S^+_{k}} \| Q_{j_1} ^{s_1} \psi_1\|_{X_{k_1,4,\frac32}^{s_1,\frac12}} \|  \psi_2 \|_{S^{s_2}_{k_2}} \\
& \ls 2^{k+k_1}   \|  \phi\|_{S^+_{k}} \| Q_{\succeq -k_1} ^{s_1} \psi_1\|_{X_{k_1,4,\frac32}^{s_1,\frac12,\infty}} \|  \psi_2 \|_{S^{s_2}_{k_2}}. 
  \end{split}
  \]
From this we obtain the following bound
\begin{equation} \label{tG2cc3}
\tilde G^2(\mathbf{k}) \ls 2^{k+k_1}  2^{-k_1(-r+\rl)} 2^{-(r+\frac12)k-r k_2} \approx 2^{k(\frac12-r)} 2^{k_1(1-\rl)},
\end{equation}
in this case. Reviewing \eqref{tG2cc1}, \eqref{tG2cc2} and \eqref{tG2cc3} shows that if $r \geq \frac12$ and $r_0 \geq 1$, then $\tilde G^2$ satisfies the estimate \eqref{G}.

\end{proof}

\begin{proof}[Proof of \eqref{cunn22}]

We fix $j \succeq -k$ and let 
\[
I_{j}(\mathbf{k}) = \int Q_{j}^{+} \phi \la  \psi_1, \beta  \psi_2 \ra dx dt. 
\]
The estimate in \eqref{cunn222} will be retrieved by summing with respect to $j$ the estimates on $I_{j}(\mathbf{k})$.  We decompose $ I_{j}(\mathbf{k})=I_{0}(\mathbf{k})+I_{1}(\mathbf{k})+I_{2}(\mathbf{k})$,  where
  \begin{align*}
    I_0(\mathbf{k}):=&  \int Q_{j}^+ \phi \, \la Q^{s_1}_{\leq j}\psi_1, \beta Q^{s_2}_{\leq j}\psi_2 \ra dx dt,
    \ \ 
    I_1(\mathbf{k}):=\sum_{j_1 > j} \int Q_{j}^+ \phi  \,\la Q^{s_1}_{j_1} \psi_1, \beta Q^{s_2}_{\leq j_1}\psi_2 \ra dxdt,\\
    I_2(\mathbf{k}):=&\sum_{j_2 > j}\int Q_{j}^+ \phi  \,\la   Q^{s_1}_{< j_2}\psi_1, \beta Q^{s_2}_{j_2}\psi_2 \ra dx dt. \\
  \end{align*}
  Here we abuse notation by not using the factor $j$ in the notation for $ I_0(\mathbf{k}),  I_1(\mathbf{k}),  I_2(\mathbf{k})$; this is done in order to have simpler notation later. We split the argument into three cases.

\medskip

     {\bf Case 1:} $|k-k_2 | \leq 10$.

  {\it Contribution of $I_0(\mathbf{k})$:} We split the analysis of
  $I_0(\mathbf{k})$ according to whether
 $j \prec k_1$ or $j \succeq k_1$. By Lemma \ref{lem:mod} there is
  no contribution if $j_1 \prec k_1$ in the case $s_1=+,s_2=-$.  With all other choices of signs, we can restrict the analysis of $I_{01}$ to $j\succeq -k_1$, so that by Lemma \ref{lem:mod} with $2l=k_1+k_2-k-j \sim k_1-j$  we have
\begin{align*}
I_{0}(\mathbf{k})= \sum_{\kappa_1,\kappa_2 \in \mathcal{K}_l \atop
      \dist(s_1 \kappa_1,s_2\kappa_2)\ls 2^{-l}} 
      \Big\{\int  Q^+_{j} \phi \cdot \la P_{\kappa_1}Q_{\leq j}^{s_1} \psi_1, \beta  P_{\kappa_2} Q_{\leq j}^{s_2}  \psi_2 \ra dx dt\Big\}.
  \end{align*}
  From \eqref{Kllg} we have the following estimate
  \begin{equation} \label{aux2}
  \| Q^+_{j} \phi \|_{L^4_t L^\frac43_x} \ls \max(2^{\frac{k-j}4},1)\cdot 2^{-\frac{j}2} \|Q_{j}^+ \phi  \|_{X^{+,\frac12}_{k,4,\frac43}}.  
  \end{equation}
Using this we estimate as follows
\[
  \begin{split}
  |I_{0}(\mathbf{k})| \ls &  \|Q_{j}^+ \phi  \|_{L^4_t L^\frac43_x} 
  \sum_{\kappa_1,\kappa_2 \in \mathcal{K}_l \atop      \dist(s_1 \kappa_1,s_2\kappa_2)\ls 2^{-l}   } 2^{-l} 
  \| Q_{\leq j}^{s_1}  P_{\kappa_1}\psi_1\|_{L^\frac83_t L^8_x} \|Q_{\leq j}^{s_2} P_{\kappa_2}\psi_2\|_{L^\frac83_t L^8_x } \\
  \ls &  2^{\frac{k-j}4} 2^{-\frac{j}2} 2^{-l} \|Q_{j}^+ \phi  \|_{X^{+, \frac12}_{k,4,\frac43}}  \| \psi_1 \|_{L^\frac83_t L^8_x[k_1;l]}  \| \psi_2 \|_{L^\frac83_t L^8_x[k_2;l]} \\
  \ls &  2^{\frac{k-j}4} 2^{-\frac{k_1}2} \|Q_{j}^+ \phi  \|_{X^{+, \frac12}_{k,4,\frac43}} 2^{\frac{3k_1}4} \| \psi_1 \|_{S^{s_1}_{k_1}}  2^{\frac{3k_2}4} \| \psi_2 \|_{S^{s_2}_{k_2}} \\
  \ls &  2^{-\frac{k+j}4} 2^{\frac{k_1+5k_2}4} \|Q_{j}^+ \phi  \|_{X^{+, \frac12}_{k,4,\frac43}}  \| \psi_1 \|_{S^{s_1}_{k_1}}  \| \psi_2 \|_{S^{s_2}_{k_2}}. 
\end{split}
  \]
 In the range $j \succeq k_1$, a similar argument with $l=0$,  gives the bound
  \begin{equation} \label{I0c1e}
  \begin{split}
  |I_{0}(\mathbf{k})| \ls  & \ \max(2^{\frac{k-j}4},1) \cdot 2^{-\frac{j}2} \|Q_{j}^+ \phi  \|_{X^{+,\frac12}_{k,4,\frac43}} 2^{\frac{3k_1}4} \| \psi_1 \|_{S^{s_1}_{k_1}}  2^{\frac{3k_2}4} \| \psi_2 \|_{S^{s_2}_{k_2}} \\
  \ls & \ 2^{k_2} 2^{\frac{k_1-j}2} \|Q_{j}^+ \phi  \|_{X^{+,\frac12}_{k,4,\frac43}} \| \psi_1 \|_{S^{s_1}_{k_1}}  \| \psi_2 \|_{S^{s_2}_{k_2}}. 
  \end{split}
  \end{equation}

  {\it Contribution of $I_1(\mathbf{k})$:} We split
  $I_1(\mathbf{k})=I_{11}(\mathbf{k})+I_{12}(\mathbf{k})$ according to
 $j_1 \prec k_1$ and $j_1\succeq k_1$. Again, by Lemma \ref{lem:mod} there is
  no contribution if $j_1 \prec k_1$ in the case $s_1=+,s_2=-$.  With all other choices of signs, we can restrict the sum in $I_{11}$ to $j_1\succeq -k_1$, so that by Lemma \ref{lem:mod} with $2l=k_1+k_2-k-j_1\sim k_1-j_1$ we have
\begin{align*}
I_{11}^{j_1}(\mathbf{k})=  \sum_{\kappa_1,\kappa_2 \in \mathcal{K}_l \atop
      \dist(s_1 \kappa_1,s_2\kappa_2)\ls 2^{-l}} 
      \Big\{\int  Q^+_{j} \phi \cdot \la P_{\kappa_1}Q_{j_1}^{s_1} \psi_1, \beta  P_{\kappa_2} Q_{\leq j_1}^{s_2}  \psi_2 \ra dx dt\Big\}.
  \end{align*}
  Using Bernstein we obtain
\[
\|P_{\kappa_1}Q_{j_1}^{s_1} \psi_1\|_{L^2_t L^\infty_x} \ls 2^{\frac{2k_1-l}2} \|P_{\kappa_1}Q_{j_1}^{s_1} \psi_1\|_{L^2_{t,x}}
\]
Using this estimate and \eqref{aux2} allows us to estimate
  \begin{align*}
  |I^{j_1}_{11}(\mathbf{k})| & \ls{} \|Q_{j}^+ \phi  \|_{L^4_t L^\frac43_x} 
   \sum_{\kappa_1,\kappa_2 \in \mathcal{K}_l \atop      \dist(s_1 \kappa_1,s_2\kappa_2)\ls 2^{-l}   } 2^{-l}
  \| Q_{j_1}^{s_1}  P_{\kappa_1}\psi_1\|_{L^2_t L^\infty_x} \|Q_{\leq j_1 }^{s_2} P_{\kappa_2}\psi_2\|_{L^4_{t,x} } \\ \\
  & \ls{}  2^{\frac{k-j}4} 2^{-\frac{j}2} \|Q^+_{j} \phi\|_{X^{+, \frac12}_{k,4,\frac43}}
  2^{-\frac{k_1-j_1}2} 2^{\frac{2k_1-l}2}  \| Q^{s_1}_{j_1} \psi_1\|_{L^2_{t,x}} 
      \|  Q_{\leq j_1}^{s_2}  \psi_2 \|_{L^4_{t,x}[k_2;l]}  \\
& \ls{}  2^{\frac{k-3j}4} \|Q^+_{j} \phi\|_{X^{+, \frac12}_{k,4,\frac43}}  2^{\frac{k_1-l}2}  \|\psi_1\|_{S^{s_1}_{k_1}}  2^{\frac{k_2}{2}} \|\psi_2\|_{S^{s_2}_{k_2}}\\
   & \ls{} 2^{\frac{k_1+3k_2}2} 2^{-\frac{3(k+j)}4} 2^{\frac{j_1-k_1}4} \|Q^+_{j} \phi\|_{X^{+,\frac12}_{k,4,\frac43}} \|\psi_1\|_{S^{s_1}_{k_1}} 
   \| \psi_2 \|_{S^{s_2}_{k_2}}.
  \end{align*}
From this we obtain
\[
|I_{11}(\mathbf{k})| \ls \sum_{-k_1 \preceq j_1 \prec k_1} |I^{j_1}_{11}(\mathbf{k})| \ls{} 2^{\frac{k_1+3k_2}2} 2^{-\frac{3(k+j)}4} \|Q^+_{j} \phi\|_{X^{+,\frac12}_{k,4,\frac43}} \|\psi_1\|_{S^{s_1}_{k_1}} 
   \| \psi_2 \|_{S^{s_2}_{k_2}}.
\]
In the range $j_1\succeq k_1$ a similar argument with $l=0$,  gives the bound
\begin{align*}
  |I^{j_1}_{12}(\mathbf{k})| & \ls{} \|Q_{j}^+ \phi  \|_{L^4_t L^\frac43_x}  
  \| Q_{j_1}^{s_1}  \psi_1\|_{L^2_t L^\infty_x} \| Q_{\leq j_1 }^{s_2} \psi_2\|_{L^4_{t,x} } \\ \\
  & \ls{}  \max(2^{\frac{k-j}4},1) 2^{-\frac{j}2} \|Q^+_{j} \phi\|_{X^{+, \frac12}_{4,\frac43}} 2^{k_1}  \| Q^{s_1}_{j_1} \psi_1\|_{L^2_{t,x}} 
    2^{\frac{k_2}{2}} \|\psi_2\|_{S^{s_2}_{k_2}}    \\
& \ls{}  2^{\frac{k-3j}4} \max(1,2^{-\frac{k-j}4}) \|Q^+_{j} \phi\|_{X^{+, \frac12}_{4,\frac43}}  2^{\frac{2k_1-j_1}2}  \|\psi_1\|_{S^{s_1}_{k_1}}  2^{\frac{k_2}{2}} \|\psi_2\|_{S^{s_2}_{k_2}}\\
   & \ls{} 2^{\frac{k_1+3k_2}2} 2^{-\frac{3(k+j)}4} 2^{\frac{k_1-j_1}2} \max(1,2^{-\frac{k-j}4}) \|Q^+_{j} \phi\|_{X^{+,\frac12}_{k,4,\frac43}} \|\psi_1\|_{S^{s_1}_{k_1}} 
   \| \psi_2 \|_{S^{s_2}_{k_2}}.
  \end{align*}
From this we obtain
\[
|I_{12}(\mathbf{k})| \ls \sum_{j_1 \succeq k_1} |I^{j_1}_{12}(\mathbf{k})| \ls{} 2^{\frac{k_1+3k_2}2} 2^{-\frac{3(k+j)}4} \max(1,2^{-\frac{k-j}4}) \|Q^+_{j} \phi\|_{X^{+,\frac12}_{k,4,\frac43}} \|\psi_1\|_{S^{s_1}_{k_1}} 
   \| \psi_2 \|_{S^{s_2}_{k_2}}.
\]
From the two estimates above on $I_{11}(\mathbf{k})$ and $I_{12}(\mathbf{k})$ we conclude with 
\begin{equation} \label{I1c1e}
|I_{1}(\mathbf{k})|  \ls{} 2^{\frac{k_1+3k_2}2} 2^{-\frac{3(k+j)}4} \max(1,2^{-\frac{k-j}4}) \|Q^+_{j} \phi\|_{X^{+,\frac12}_{k,4,\frac43}} \|\psi_1\|_{S^{s_1}_{k_1}} 
   \| \psi_2 \|_{S^{s_2}_{k_2}}.
\end{equation}
 {\it Contribution of $I_2(\mathbf{k})$:}
 We split
  $I_2(\mathbf{k})=I_{21}(\mathbf{k})+I_{22}(\mathbf{k})$ according to
 $j_2 \prec k_1$ and $j_2\succeq k_1$. Again, by Lemma \ref{lem:mod} there is
  no contribution if $j_2 \prec k_1$ in the case $s_1=+,s_2=-$.  With all other choices of signs, we can restrict the sum in $I_{21}$ to $j_2\succeq -k_1$, so that by Lemma \ref{lem:mod} with $2l=k_1+k_2-k-j_2 \sim k_1-j_2$ we have
\begin{align*}
I_{21}^{j_2}(\mathbf{k})=  \sum_{\kappa_1,\kappa_2 \in \mathcal{K}_l \atop
      \dist(s_1 \kappa_1,s_2\kappa_2)\ls 2^{-l}} 
      \Big\{\int  Q^+_{j} \phi \cdot \la P_{\kappa_1}Q_{< j_2}^{s_1} \psi_1, \beta  P_{\kappa_2} Q_{j_2}^{s_2}  \psi_2 \ra dx dt\Big\}.
  \end{align*}
  Using Bernstein we obtain
\[
\|P_{\kappa_1}Q_{< j_2}^{s_1} \psi_1\|_{L^4_t L^\infty_x} \ls 2^{\frac{2k_1-l}4} \|P_{\kappa_1}Q_{j_1}^{s_1} \psi_1\|_{L^4_{t,x}}, \quad 
\|P_{\kappa_2}Q_{j_2}^{s_2} \psi_2\|_{L^2_t L^4_x} \ls 2^{\frac{2k_2-l}4} \|P_{\kappa_1}Q_{j_2}^{s_2} \psi_1\|_{L^2_{t,x}}.
\]
Using these estimates and \eqref{aux2} allows us to estimate
  \begin{align*}
  |I^{j_2}_{21}(\mathbf{k})| & \ls{} \|Q_{j}^+ \phi  \|_{L^4_t L^\frac43_x} 
   \sum_{\kappa_1,\kappa_2 \in \mathcal{K}_l \atop      \dist(s_1 \kappa_1,s_2\kappa_2)\ls 2^{-l}   } 2^{-l}
  \| Q_{<j_2}^{s_1}  P_{\kappa_1}\psi_1\|_{L^4_t L^\infty_x} \|Q_{ j_2 }^{s_2} P_{\kappa_2}\psi_2\|_{L^2_t L^4_{x} } \\ \\
  & \ls{}  2^{\frac{k-j}4} 2^{-\frac{j}2} \|Q^+_{j} \phi\|_{X^{+, \frac12}_{k,4,\frac43}}
  2^{-\frac{k_1-j_2}2}  2^{\frac{2k_1-l}4} \| Q^{s_1}_{< j_2} \psi_1\|_{L^4_{t,x}[k_1;l]}  2^{\frac{2k_2-l}4} \|  Q_{j_2}^{s_2}  \psi_2 \|_{L^2_{t,x}}  \\
& \ls{}  2^{\frac{k-3j}4} \|Q^+_{j} \phi\|_{X^{+, \frac12}_{k,4,\frac43}}  
2^{-\frac{k_1-j_2}2} 2^{\frac{k_1+k_2-l}2}  2^{\frac{k_1}2} \|\psi_1\|_{S^{s_1}_{k_1}}  2^{-\frac{j_2}2} \|\psi_2\|_{S^{s_2}_{k_2}}\\
   & \ls{} 2^{\frac{k_1+3k_2}2} 2^{-\frac{3(k+j)}4} 2^{\frac{j_2-k_1}4} \|Q^+_{j} \phi\|_{X^{+,\frac12}_{k,4,\frac43}} \|\psi_1\|_{S^{s_1}_{k_1}} 
   \| \psi_2 \|_{S^{s_2}_{k_2}}.
  \end{align*}
From this we obtain
\[
|I_{21}(\mathbf{k})| \ls \sum_{-k_1 \preceq j_2 \prec k_1} |I^{j_2}_{21}(\mathbf{k})| \ls{} 2^{\frac{k_1+3k_2}2} 2^{-\frac{3(k+j)}4} \|Q^+_{j} \phi\|_{X^{+,\frac12}_{k,4,\frac43}} \|\psi_1\|_{S^{s_1}_{k_1}} 
   \| \psi_2 \|_{S^{s_2}_{k_2}}.
\]
In the range $j_1\succeq k_1$ a similar argument with $l=0$
  gives the bound
\begin{align*}
  |I^{j_1}_{12}(\mathbf{k})| & \ls{} \|Q_{j}^+ \phi  \|_{L^4_t L^\frac43_x}  
  \| Q_{<j_2}^{s_1}  \psi_1\|_{L^4_t L^\infty_x} \| Q_{j_2 }^{s_2} \psi_2\|_{L^2_t L^4_{x} } \\ \\
  & \ls{}  \max(2^{\frac{k-j}4},1) 2^{-\frac{j}2} \|Q^+_{j} \phi\|_{X^{+, \frac12}_{k,4,\frac43}} 2^{\frac{k_1}2}  \| Q^{s_1}_{< j_2} \psi_1\|_{L^4_{t,x}} 
    2^{\frac{k_2}{2}} \| Q_{j_2 }^{s_2}\psi_2\|_{L^2_{t,x}}    \\
& \ls{}  2^{\frac{k-3j}4} \max(1, 2^{-\frac{k-j}4}) \|Q^+_{j} \phi\|_{X^{+, \frac12}_{k,4,\frac43}}  2^{k_1}  \|\psi_1\|_{S^{s_1}_{k_1}}  2^{\frac{k_2-j_2}{2}} \|\psi_2\|_{S^{s_2}_{k_2}}\\
   & \ls{} 2^{\frac{k_1+3k_2}2} 2^{-\frac{3(k+j)}4} \max(1, 2^{-\frac{k-j}4})2^{\frac{k_1-j_2}2} \|Q^+_{j} \phi\|_{X^{+,\frac12}_{k,4,\frac43}} \|\psi_1\|_{S^{s_1}_{k_1}} 
   \| \psi_2 \|_{S^{s_2}_{k_2}}.
  \end{align*}
From this we obtain
\[
|I_{22}(\mathbf{k})| \ls \sum_{j_2 \succeq k_1} |I^{j_2}_{22}(\mathbf{k})| \ls{} 2^{\frac{k_1+3k_2}2} 2^{-\frac{3(k+j)}4} \max(1, 2^{-\frac{k-j}4}) \|Q^+_{j} \phi\|_{X^{+,\frac12}_{k,4,\frac43}} \|\psi_1\|_{S^{s_1}_{k_1}} 
   \| \psi_2 \|_{S^{s_2}_{k_2}}.
\]
From the two estimates above on $I_{11}(\mathbf{k})$ and $I_{12}(\mathbf{k})$ we conclude with 
\begin{equation} \label{I2c1e}
|I_{2}(\mathbf{k})|  \ls{} 2^{\frac{k_1+3k_2}2} 2^{-\frac{3(k+j)}4} \max(1, 2^{-\frac{k-j}4}) \|Q^+_{j} \phi\|_{X^{+,\frac12}_{k,4,\frac43}} \|\psi_1\|_{S^{s_1}_{k_1}} 
   \| \psi_2 \|_{S^{s_2}_{k_2}}.
\end{equation}
Adding the estimates \eqref{I0c1e}, \eqref{I1c1e} and \eqref{I2c1e}, and performing the summation with respect to $j \succeq -k$ gives the estimate
\[
\Big| \int Q_{\succeq -k}^+ \phi \la \psi_1, \beta  \psi_2 \ra dx dt\Big| \ls{}  2^{\frac{k_1+3k_2}2}
     \| Q_{\succeq -k}^+ \phi\|_{ X^{+, \frac12, \infty}_{k,4,\frac43}}\|\psi_1\|_{S^{s_1}_{k_1}}\|\psi_2\|_{S^{s_2}_{k_2}},
\]
thus we obtain
\begin{equation} \label{tG1cc1}
\tilde G^1(\mathbf{k})= 2^{\frac{k_1+3k_2}2} 2^{-rk_1} 2^{-(\rl+\frac12)k}= 2^{(\frac12-r)k_1} 2^{(1-\rl)k},
\end{equation}
in this case. 

\medskip

     {\bf Case 2:} $|k-k_1 | \leq 10$. This is identical to Case 1.

\medskip

     {\bf Case 3:} $|k_1-k_2 | \leq 10$.

      {\it Contribution of $I_0(\mathbf{k})$:} By Lemma \ref{lem:mod} there is
  no contribution if $j \prec k_1$ in the case $s_1=+,s_2=-$ or $s_1=-,s_2=+$ and $k \prec \min (k_1,k_2)$.  In all remaining cases, we can restrict the analysis to $j\succeq -k$. Assume $j \prec k$; by Lemma \ref{lem:mod} with $2l=k_1+k_2-k-j \sim 2k_1-k-j$  we have
\begin{equation} \label{aux3}
\begin{split}
I_{0}(\mathbf{k})= \sum_{\kappa_1,\kappa_2 \in \mathcal{K}_l \atop
      \dist(s_1 \kappa_1,s_2\kappa_2)\ls 2^{-l}} 
      \Big\{\int  Q^+_{j} \phi \cdot \la P_{\kappa_1}Q_{\leq j}^{s_1} \psi_1, \beta  P_{\kappa_2} Q_{\leq j}^{s_2}  \psi_2 \ra dx dt\Big\}.
  \end{split}
  \end{equation}
 Using \eqref{aux2} we estimate as follows
\[
  \begin{split}
  |I_{0}(\mathbf{k})| \ls &  \|Q_{j}^+ \phi  \|_{L^4_t L^\frac43_x} 
   \sum_{\kappa_1,\kappa_2 \in \mathcal{K}_l \atop      \dist(s_1 \kappa_1,s_2\kappa_2)\ls 2^{-l}   } 2^{-l}
  \| Q_{\leq j}^{s_1}  P_{\kappa_1}\psi_1\|_{L^\frac83_t L^8_x} \|Q_{\leq j}^{s_2} P_{\kappa_2}\psi_2\|_{L^\frac83_t L^8_x } \\
  \ls &  2^{\frac{k-j}4} 2^{-\frac{j}2} 2^{-l} \|Q_{j}^+ \phi  \|_{X^{+, \frac12}_{4,\frac43}}  \| Q_{\leq j}^{s_1} \psi_1 \|_{L^\frac83_t L^8_x[k_1;l]}  \| Q_{\leq j}^{s_2} \psi_2 \|_{L^\frac83_t L^8_x[k_2;l]} \\
  \ls &  2^{\frac{k-j}4} 2^{\frac{k-2k_1}2} \|Q_{j}^+ \phi  \|_{X^{+, \frac12}_{4,\frac43}} 2^{\frac{3k_1}4} \| \psi_1 \|_{S^{s_1}_{k_1}}  2^{\frac{3k_2}4} \| \psi_2 \|_{S^{s_2}_{k_2}} \\
  \ls &  2^{-\frac{k+j}4} 2^{\frac{2k+k_1}2} \|Q_{j}^+ \phi  \|_{X^{+, \frac12}_{4,\frac43}}  \| \psi_1 \|_{S^{s_1}_{k_1}}  \| \psi_2 \|_{S^{s_2}_{k_2}}. 
\end{split}
  \]
We now consider the range $j \succeq k$. In the case $s_1=s_2$ by Lemma \ref{lem:mod} we have that \eqref{aux3} holds true  with $l= k_1-k$; in the case $s_1=-,s_2=+$ and $k \succeq \min(k_1,k_2)$ we choose $l=0$. Then the same argument used above gives the estimate
\[
  \begin{split}
  |I_{0}(\mathbf{k})| \ls 2^{\frac{k+k_1}2} 2^{\frac{k-j}2} \|Q_{j}^+ \phi  \|_{X^{+, \frac12}_{k,4,\frac43}}  \| \psi_1 \|_{S^{s_1}_{k_1}}  \| \psi_2 \|_{S^{s_2}_{k_2}}. 
\end{split}
  \]
  In the case $s_1=+, s_2=-$ or $s_1=-, s_2=+$ and $k \succeq \min(k_1,k_2)$, Lemma \ref{lem:mod} implies there is a contribution only if $j \succeq k_1$. Then a similar argument with $l=0$ gives the estimate:
    \[
  |I_{0}(\mathbf{k})| \ls  \ 2^{k_1} 2^{\frac{k_1-j}2} \|Q_{j}^+ \phi  \|_{X^{+,\frac12}_{k,4,\frac43}} \| \psi_1 \|_{S^{s_1}_{k_1}}  \| \psi_2 \|_{S^{s_2}_{k_2}}. 
  \] 
Thus a common estimate for all cases is
\begin{equation} \label{I0c3e}
|I_{0}(\mathbf{k})| \ls 2^{\frac{k+2k_1}2} (2^{-\frac{k+j}4}+ \mathbf{1}_{j \succeq k_1} 2^{\frac{k_1-j}2}) \|Q_{j}^+ \phi  \|_{X^{+,\frac12}_{k,4,\frac43}} \| \psi_1 \|_{S^{s_1}_{k_1}}  \| \psi_2 \|_{S^{s_2}_{k_2}}, 
\end{equation}
where in the above we have used the notation $\mathbf{1}_{j \succeq k_1}$ for the characteristic function of the set $\{j \in \Z: j \succeq k_1\}$.

{\it Contribution of $I_1(\mathbf{k})$:} 
By Lemma \ref{lem:mod} there is
  no contribution if $j_1 \prec k_1$ in the case $s_1=+,s_2=-$ or $s_1=-,s_2=+$ and $k \prec \min (k_1,k_2)$.  In all remaining cases, we can restrict the analysis to $j_1 \succeq -k$. Assume $j_1 \prec k$; by Lemma \ref{lem:mod} with $2l=k_1+k_2-k-j_1 \sim 2k_1-k-j_1$  we have
\begin{equation}
\begin{split}
I_{1}^{j_1}(\mathbf{k})= \sum_{\kappa_1,\kappa_2 \in \mathcal{K}_l \atop
      \dist(s_1 \kappa_1,s_2\kappa_2)\ls 2^{-l}} 
      \Big\{\int  Q^+_{j} \phi \cdot \la P_{\kappa_1}Q_{j_1}^{s_1} \psi_1, \beta  P_{\kappa_2} Q_{\leq j_1}^{s_2}  \psi_2 \ra dx dt\Big\}.
  \end{split}
  \end{equation}
 Using \eqref{aux2} and Bernstein gives the estimate
 \[
\| Q^+_{j} \phi \|_{L^4_{t,x}} \ls 2^k \| Q^+_{j} \phi \|_{L^4_t L^\frac43_x} \ls 2^{k-\frac{j}2} \max(2^{\frac{k-j}4},1) \|Q_{j}^+ \phi  \|_{X^{+,\frac12}_{k,4,\frac43}} \ls
2^{\frac{3k-j}2}  \|Q_{j}^+ \phi  \|_{X^{+,\frac12}_{k,4,\frac43}}.
 \]
 From this we obtain
\[
  \begin{split}
  |I_{1}^{j_1}(\mathbf{k})| \ls &  \|Q_{j}^+ \phi  \|_{L^4_{t,x}} 
   \sum_{\kappa_1,\kappa_2 \in \mathcal{K}_l \atop      \dist(s_1 \kappa_1,s_2\kappa_2)\ls 2^{-l}   } 2^{-l}
  \| Q_{j_1}^{s_1}  P_{\kappa_1}\psi_1 \|_{L^2_{t,x}} \|Q_{\leq j_1}^{s_2} P_{\kappa_2}\psi_2\|_{L^4_{t,x} } \\
  \ls &  2^{\frac{3k-j}2} 2^{-l} \|Q_{j}^+ \phi  \|_{X^{+, \frac12}_{k,4,\frac43}}  \| Q_{j_1}^{s_1} \psi_1 \|_{L^2_{t,x}}  \| Q_{\leq j_1}^{s_2} \psi_2 \|_{L^4_{t,x}[k_2;l]} \\
  \ls &  2^{\frac{3k-j}2} 2^{-l} \|Q_{j}^+ \phi  \|_{X^{+, \frac12}_{k,4,\frac43}} 2^{-\frac{j_1}2} \| \psi_1 \|_{S^{s_1}_{k_1}}  2^{\frac{k_2}2} \| \psi_2 \|_{S^{s_2}_{k_2}} \\
  \ls &  2^{\frac{5k-k_1}2}  2^{-\frac{k+j}2} \|Q_{j}^+ \phi  \|_{X^{+, \frac12}_{k,4,\frac43}}  \| \psi_1 \|_{S^{s_1}_{k_1}}  \| \psi_2 \|_{S^{s_2}_{k_2}}. 
\end{split}
  \]
  We now consider the range $j_1 \succeq k$. In the case $s_1=s_2$ by Lemma \ref{lem:mod} we have that \eqref{aux3} holds true  with $l= k_1-k$; in the case $s_1=-,s_2=+$ and $k \succeq \min(k_1,k_2)$ we choose $l=0$. Then the same argument used above gives the estimate
 \[
  \begin{split}
  |I_{1}^{j_1}(\mathbf{k})| \ls 2^{\frac{5k-k_1}2}  2^{-\frac{k+j}2} 2^{\frac{k-j_1}2}  \|Q_{j}^+ \phi  \|_{X^{+, \frac12}_{k,4,\frac43}}  \| \psi_1 \|_{S^{s_1}_{k_1}}  \| \psi_2 \|_{S^{s_2}_{k_2}}. 
\end{split}
  \]
 In the case $s_1=+, s_2=-$ or $s_1=-, s_2=+$ and $k \succeq \min(k_1,k_2)$, Lemma \ref{lem:mod} implies there is a contribution only if $j_1 \succeq k_1$. Then a similar argument with $l=0$ gives the estimate:
    \[
  \begin{split}
  |I_{1}^{j_1}(\mathbf{k})| \ls  & \ 2^{\frac{3k-j}2}  \|Q_{j}^+ \phi  \|_{X^{+,\frac12}_{k,4,\frac43}} 2^{-\frac{j_1}2} \| \psi_1 \|_{S^{s_1}_{k_1}}  2^{\frac{k_2}2} \| \psi_2 \|_{S^{s_2}_{k_2}} \\
  \ls & \ 2^{2k}  2^{-\frac{k+j}2} 2^{\frac{k_1-j_1}2} \|Q_{j}^+ \phi  \|_{X^{+,\frac12}_{k,4,\frac43}} \| \psi_1 \|_{S^{s_1}_{k_1}}  \| \psi_2 \|_{S^{s_2}_{k_2}}. 
  \end{split}
  \]
Summing with respect to $j_1$ gives us the following common estimate
\begin{equation} \label{I1c3e}
    |I_{1}(\mathbf{k})| \ls \la k \ra 2^{2k} 2^{-\frac{k+j}2} \|Q_{j}^+ \phi  \|_{X^{+,\frac12}_{k,4,\frac43}} \| \psi_1 \|_{S^{s_1}_{k_1}}  \| \psi_2 \|_{S^{s_2}_{k_2}}. 
\end{equation}
We note that $\la k \ra$ appears only from the summation in the regime $j_1 \prec k$; should we have worked only in the high frequency regime $j \succeq k$, the above scenario would have been unnecessary and we could avoid the logarithmic loss. 

It is easy to see that the contribution of $I_2(\mathbf{k})$
has similar bounds to the ones for $I_1(\mathbf{k})$.

Adding the estimates \eqref{I0c3e} and  \eqref{I1c3e} (these apply to $I_{2}(\mathbf{k})$), and performing the summation with respect to $j \succeq -k$ gives the estimate
\[
\Big| \int Q_{\succeq -k}^+ \phi \la \psi_1, \beta  \psi_2 \ra dx dt\Big| \ls{}  \la k \ra 2^{k+k_1}
     \| Q_{\succeq -k}^+ \phi\|_{ X^{+, \frac12, \infty}_{k,4,\frac43}}\|\psi_1\|_{S^{s_1}_{k_1}}\|\psi_2\|_{S^{s_2}_{k_2}},
\]
thus we obtain
\begin{equation} \label{tG1cc3}
    \tilde G^1(\mathbf{k})= 2^{k+k_1}  2^{(r-\rl-\frac12)k} 2^{-2rk_1}= 2^{(1-2r)k_1} 2^{(r+\frac12-\rl)k}.
\end{equation}
Reviewing \eqref{tG1cc1} and \eqref{tG1cc3}, it follows that $\tilde G^1$ satisfies  \eqref{G} under the assumption that $r \geq \frac12$ and $r_0 \geq 1$. 

\end{proof}

\subsection{Proof of Theorem \ref{thm:main}} \label{subsect:proof} This argument is entirely similar to the one in \cite{BH-DKG}, except for the use of different resolution spaces. 

Again, for notational convenience, let $m=M=1$. Fix $\eps>0$. We will construct a solution \[(\psi_+,\psi_-,\phi_+)\in \mathbf{Z}^{\frac12+\eps}:=Z^{+,\frac12+\eps}\times Z^{-,\frac12+\eps}\times Z^{+,1+\eps}\] of the system \eqref{DKGf} in integral form, i.e.\
\begin{align*}
\psi_+(t)=&e^{- it \la D \ra}\Pi_+(D)\psi_0 +i\int_0^t e^{-i(t-s) \la D \ra}\Pi_+(D)[\Re\phi_+\beta (\psi_++\psi_-)]ds\\
\psi_-(t)=&e^{it \la D \ra}\Pi_-(D)\psi_0 +i\int_0^t e^{i(t-s) \la D \ra}\Pi_-(D)[\Re\phi_+\beta (\psi_++\psi_-)]ds\\
\phi_+(t)=&e^{- it \la D \ra}\phi_{+,0} +i\int_0^t e^{- i(t-s) \la D \ra}\la D\ra^{-1}\la (\psi_++\psi_-),\beta (\psi_++\psi_-)\ra ds,
\end{align*}
provided that the initial data satisfy $\|\psi_{0}\|_{H^{\frac12+\eps}(\R^2)}\leq \delta,\|\phi_{+,0}\|_{H^{1+\eps}(\R^2)}\leq \delta$, for sufficiently small $\delta>0$. We note that in the definition of $Z^\pm_{k}$ in \eqref{Zdef}, and hence in that of $Z^{\pm,\sigma}$, we did not specify the value of $r_0$. Here we set the value $r_0=1$.

Let $T(\psi_+,\psi_-,\phi_+)$ denote the operator defined by the right hand side of the above formula. Using Proposition \ref{mainpro} and Lemma \ref{lem:lin}, we conclude with
\begin{align*}
\|T(\psi_+,\psi_-,\phi_+)\|_{\mathbf{Z}^{\frac12+\eps}}
\ls{} &\delta +\|\phi_+\|_{Z^{+,1+\eps}}(\|\psi_+\|_{Z^{+,\frac12+\eps}}+\|\psi_-\|_{Z^{-,\frac12+\eps}}) +(\|\psi_+\|_{Z^{+,\frac12+\eps}}+\|\psi_-\|_{Z^{-,\frac12+\eps}})^2\\
\ls{} &\delta+\|(\psi_+,\psi_-,\phi_+)\|_{\mathbf{Z}^{\frac12+\eps}}^2,
\end{align*}
and similar estimates for differences. Hence, in a small closed ball in the complete space $\mathbf{Z}^{\frac12+\eps}$ we can invoke the contraction mapping principle to obtain a unique solution. Further, continuous dependence on the initial data is an easy consequence.

It remains to prove that these solutions scatter, which we will only do for $t \to +\infty$, the other case being similar.
It suffices to show that for a solution $(\psi_+,\psi_-,\phi_+)\in \mathbf{Z}^{\frac12+ \eps}$ we have convergence of the integrals that appear above in the expressions of $\psi_\pm(t)$ and $\phi_+(t)$. 

This is a byproduct of the linear theory provided by Lemma \ref{lem:lin}.
Indeed, by Remark \ref{rmk:u2} it follows that on the dyadic level these integrals are in fact in $U^{\pm}_k$ and this is square-summable. From this it follows that they are in the space
\[ V^2(\R;H^{\frac12+\eps}(\R^2)) \times  V^2(\R;H^{\frac12+\eps}(\R^2))
\times  V^2(\R;H^{1+\eps}(\R^2)). 
\]
Functions of bounded $2-$variation have limits at infinity \cite[Prop.~2.2]{HHK} which proves the scattering claim.

\bibliographystyle{plain} \bibliography{dkg-refs}

\end{document}